\journal{Graphs and Combinatorics}
\newcommand{\Z}{\mathbb{Z}}
\newcommand{\Cay}{\mathrm{Cay}}
\newcommand{\divides}{\mathrel{|}}
\newcommand{\Kautz}{\mathrm{Kautz}}
\begin{document}

	\newdefinition{definition}{Definition}
	\newtheorem{theorem}{Theorem}
	\newtheorem{corollary}{Corollary}
	\newtheorem{lemma}{Lemma}
	\newtheorem{conjecture}{Conjecture}
	
	\tikzset{middlearrow/.style={
			decoration={markings,
				mark= at position 0.7 with {\arrow[scale=2]{#1}} ,
			},
			postaction={decorate}
		}
	}
	
	\tikzset{midarrow/.style={
			decoration={markings,
				mark= at position 0.5 with {\arrow[scale=2]{#1}} ,
			},
			postaction={decorate}
		}
	}
	
	\newcommand{\TODO}[1]{\textcolor{red}{TODO: #1}}

	\begin{frontmatter}
		
		%% Title, authors and addresses
		
		\title{On networks with order close to the Moore bound}
		
		%% use the tnoteref command within \title for footnotes;
		%% use the tnotetext command for the associated footnote;
		%% use the fnref command within \author or \address for footnotes;
		%% use the fntext command for the associated footnote;
		%% use the corref command within \author for corresponding author footnotes;
		%% use the cortext command for the associated footnote;
		%% use the ead command for the email address,
		%% and the form \ead[url] for the home page:
		%%
		%% \title{Title\tnoteref{label1}}
		%% \tnotetext[label1]{}
		%% \author{Name\corref{cor1}\fnref{label2}}
		%% \ead{email address}
		%% \ead[url]{home page}
		%% \fntext[label2]{}
		%% \cortext[cor1]{}
		%% \address{Address\fnref{label3}}
		%% \fntext[label3]{}

		%% use optional labels to link authors explicitly to addresses:
		%% \author[label1,label2]{<author name>}
		%% \address[label1]{<address>}
		%% \address[label2]{<address>}

		\author[label1]{James Tuite}
		\ead{james.tuite@open.ac.uk}
		\author[label2]{Grahame Erskine}

		\address{Department of Mathematics and Statistics, Open University, Walton Hall, Milton Keynes}

		\begin{abstract}
			The degree/diameter problem for mixed graphs asks for the largest possible order of a mixed graph with given diameter and degree parameters. Similarly the \emph{degree/geodecity} problem concerns the smallest order of a $k$-geodetic mixed graph with given minimum undirected and directed degrees; this is a generalisation of the classical degree/girth problem. In this paper we present new bounds on the order of mixed graphs with given diameter or geodetic girth and exhibit new examples of directed and mixed geodetic cages. In particular, we show that any $k$-geodetic mixed graph with excess one must have geodetic girth two and be totally regular, thereby proving an earlier conjecture of the authors.
		\end{abstract}
		
		\begin{keyword}
			Degree/diameter problem \sep Geodecity \sep Mixed graph \sep Excess \sep Cage \sep Defect 
			%% keywords here, in the form: keyword \sep keyword
			
			%% MSC codes here, in the form: \MSC code \sep code
			%% or \MSC[2008] code \sep code (2000 is the default)
			\MSC  05C35 \sep 05C20 \sep 90C35
		\end{keyword}
		
	\end{frontmatter}
	
	\newcommand*{\Perm}[2]{{}^{#1}\!P_{#2}}	
	
	%% main text
	%----------------------------------------------
	\section{Introduction}
	
	It is often of practical interest to consider networks that include both undirected edges and directed arcs. For example, road networks contain both two-way and one-way streets and websites contain links that are unidirectional and others that are bidirectional. Such networks are represented mathematically by \emph{mixed graphs}; such graphs have applications in job scheduling~\cite{Ries} and Bayesian inference~\cite{CowDawLauSpi} amongst others. The efficiency of such networks may be measured by studying such graph parameters as the \emph{diameter} (longest distance between nodes) or the \emph{geodetic girth} (which pertains to the existence of multiple short paths between nodes). In this paper we discuss two extremal problems for these parameters in mixed graphs.
	
	In the case of undirected graphs, the \emph{degree/diameter} problem asks for the largest possible order of a graph with given diameter and maximum degree.  The order of such a graph is bounded above by the so-called \emph{Moore bound}; a survey of this problem can be found in~\cite{MilSir}. The \emph{degree/girth} problem requires the smallest possible order of a graph with given minimum degree and girth; a good survey of this problem is~\cite{ExoJaj}.  For this problem the Moore bound now serves as a lower bound on the order. The degree/diameter problem has also been investigated in the setting of directed graphs~\cite{MilSir} and mixed graphs~\cite{LopPer}. Several recent papers, such as~\cite{MilMirSil,Sil}, have also discussed a directed analogue of the degree/girth problem called the \emph{degree/geodecity} problem. In \cite{TuiErs} the present authors extended the degree/geodecity problem to mixed graphs and discussed the total regularity of extremal graphs in the degree/diameter and degree/geodecity problems.    
	
	The structure of this paper is as follows. Section~\ref{Notation} defines the notation that we will be using and provides some background on the problems that we will discuss. In Section~\ref{Existence of cages} we prove the existence of mixed geodetic cages and discuss monotonicity relations.  We then present strong new bounds on the excess of totally regular mixed graphs in Section~\ref{excesstotregular} and generalise our results to mixed graphs that are not totally regular in Section~\ref{counting not totreg}, which allows us to prove the non-existence of $k$-geodetic mixed graphs with excess one for $k \geq 3$, thereby proving a conjecture of the authors in~\cite{TuiErs}. Employing similar counting arguments, we give a new upper bound on the order of totally regular mixed graphs with undirected degree and directed degree equal to one in Section~\ref{improved Fiol bound}.  Finally in Section~\ref{directed and mixed cages} we present new mixed and directed geodetic cages and give upper bounds for some other values of the degrees and geodetic girth using a computer search among mixed Cayley graphs.

	\section{Notation}\label{Notation}
	
	Formally, a \emph{mixed graph} $G$ consists of a set $V(G)$ of \emph{vertices}, a set $E(G)$ of undirected \emph{edges} and a set $A(G)$ of directed \emph{arcs}.  An undirected edge is an unordered pair of vertices, whereas an arc is an ordered pair of vertices.  We forbid loops as well as parallel edges and arcs.  For any notation not defined here we refer to~\cite{BonMur}.
	
	Each vertex $u$ is incident with a certain number $d(u)$ of undirected edges; we call this the \emph{undirected degree} of $u$.  Similarly the number of arcs with initial point $u$ is the \emph{directed out-degree} of $u$ and is denoted $d^+(u)$, whereas the \emph{directed in-degree} of $u$ is the number of arcs of $G$ with terminal vertex $u$ and is written $d^-(u)$.  If there is an edge between vertices $u$ and $v$ we write $u \sim v$, whereas the presence of an arc from $u$ to $v$ is indicated by $u \rightarrow v$.  For any vertex $u$ we set $U(u) = \{ u_1,u_2, \dots , u_r\}  = \{ v \in V(G) : u \sim v\} $, $Z^-(u) = \{ v_1,v_2, \dots , v_s\} = \{ v \in V(G) : v \rightarrow u\} $ and $Z^+(u) = \{ u_{r+1}, \dots , u_{r+z}\} = \{ v \in V(G) : u \rightarrow v\} $.  If there exist $r$ and $z$ such that for all vertices $u$ we have $d(u) = r, d^+(u) = z$, then $G$ is said to be \emph{out-regular}.  If we also have $d^-(u) = d^+(u) = z$ for all $u$ then we say that $G$ is \emph{totally regular}.  If $G^U$ and $G^Z$ denote respectively the undirected and directed subgraphs (i.e. the subgraphs induced by the edges/arcs), then it can be seen that $G$ is out-regular if and only if $G^U$ is regular and $G^Z$ is out-regular, and $G$ is totally regular if and only if $G^U$ is regular and $G^Z$ is diregular.
	
	A \emph{walk} $W$ in $G$ is a sequence $u_0u_1 \dots u_{\ell }$ of vertices of $G$ such that for $0 \leq i \leq \ell -1$ either $u_i \sim u_{i+1}$ or $u_i \rightarrow u_{i+1}$.  The length of the walk is $\ell $ and $u_0$ and $u_{\ell }$ are the initial and terminal vertices of $W$ respectively.  The walk is \emph{non-backtracking} if the walk does not cross an edge and then immediately retrace it, i.e. if the walk does not contain a subsequence $u_i \sim u_{i+1} \sim u_i$.  We will call a non-backtracking walk in $G$ a \emph{mixed path}.
	
	The distance $d(u,v)$ from a vertex $u$ to a vertex $v$ is the length of a shortest mixed path with initial vertex $u$ and terminal vertex $v$.  Observe that we can have $d(u,v) \neq d(v,u)$.  If there is no mixed path from $u$ to $v$ then we set $d(u,v) = \infty $.  The diameter of $G$ is defined to be $diam(G) = \max \{ d(u,v): u,v \in V(G)\} $.  Suppose that for any ordered pair of vertices $(u,v)$ of $G$ there is at most one mixed path from $u$ to $v$ with length $\leq k$; then we say that $G$ is \emph{$k$-geodetic}.  The largest $k$ such that $G$ is $k$-geodetic is the \emph{geodetic girth} or \emph{geodecity} of $G$.    
	
	A \emph{mixed Moore graph} is an out-regular mixed graph $G$ such that for every pair of vertices $u,v$ of $G$ there is a unique mixed path of length $\leq k$ from $u$ to $v$. We can draw a \emph{mixed Moore tree} to deduce an upper bound on the order of a mixed graph $G$ with maximum undirected degree $r$, maximum directed out-degree $z$ and diameter $k$. Fix a vertex $u$ and call this root vertex Level $0$ of the tree. Draw edges from Level $0$ to Level 1 from $u$ to all of the undirected neighbours of $u$ and arcs from Level $0$ to all of the directed out-neighbours of $u$. In general, once we have added all vertices at Level $t$, where $0 \leq t \leq k-1$, we add the next level to the tree by the following rule for each vertex $u_i$ in Level $t$:
	
	\begin{itemize}
		\item Draw arcs from Level $t$ to Level $t+1$ from $u_i$ to all directed out-neighbours of $u_i$.
		\item If $u_i$ appears in Level $t$ as the terminal vertex of an arc from Level $t-1$ then draw edges from Level $t$ to Level $t+1$ from $u_i$ to all undirected neighbours of $u_i$.
		\item If $u_i$ appears in Level $t$ as the endpoint of an edge from a vertex $u_j$ in Level $t-1$, then below $u_i$ in the Moore tree draw an edge from $u_i$ to Level $t+1$ to all undirected neighbours of $u_i$ apart from $u_j$.    
	\end{itemize}
	
	We continue this process until we have a tree of depth $k$. As $G$ has diameter $k$ all vertices of $G$ are contained in the mixed Moore tree. An example for a mixed graph with maximum undirected degree $r = 3$, maximum directed out-degree $z = 3$ and diameter $k = 2$ is shown in Figure \ref{fig:mooretree}.

	\begin{figure}
		\centering
		\begin{tikzpicture}[middlearrow=stealth,x=0.2mm,y=-0.2mm,inner sep=0.1mm,scale=2.1,
			thick,vertex/.style={circle,draw,minimum size=10,font=\small,fill=lightgray},every label/.style={font=\scriptsize}]
			
			\node at (200,0) [vertex,label=above:{$0$}] (v0) {};
			
			\node at (25,100) [vertex,label=left:{$1$}] (v1) {};
			\node at (95,100) [vertex,label=left:{$2$}] (v2) {};
			\node at (165,100) [vertex,label=left:{$3$}] (v3) {};
			\node at (235,100) [vertex,label=left:{$4$}] (v4) {};
			\node at (305,100) [vertex,label=left:{$5$}] (v5) {};
			\node at (375,100) [vertex,label=left:{$6$}] (v6) {};
			
			\node at (5,200) [vertex,label=below:{$7$}] (v11) {};
			\node at (15,200) [vertex,label=below:{$8$}] (v12) {};
			\node at (25,200) [vertex,label=below:{$9$}] (v13) {};
			\node at (35,200) [vertex,label=below:{$10$}] (v14) {};
			\node at (45,200) [vertex,label=below:{$11$}] (v15) {};
			
			\node at (75,200) [vertex,label=below:{$12$}] (v21) {};
			\node at (85,200) [vertex,label=below:{$13$}] (v22) {};
			\node at (95,200) [vertex,label=below:{$14$}] (v23) {};
			\node at (105,200) [vertex,label=below:{$15$}] (v24) {};
			\node at (115,200) [vertex,label=below:{$16$}] (v25) {};
			
			\node at (145,200) [vertex,label=below:{$17$}] (v31) {};
			\node at (155,200) [vertex,label=below:{$18$}] (v32) {};
			\node at (165,200) [vertex,label=below:{$19$}] (v33) {};
			\node at (175,200) [vertex,label=below:{$20$}] (v34) {};
			\node at (185,200) [vertex,label=below:{$21$}] (v35) {};
			
			\node at (210,200) [vertex,label=below:{$22$}] (v41) {};
			\node at (220,200) [vertex,label=below:{$23$}] (v42) {};
			\node at (230,200) [vertex,label=below:{$24$}] (v43) {};
			\node at (240,200) [vertex,label=below:{$25$}] (v44) {};
			\node at (250,200) [vertex,label=below:{$26$}] (v45) {};
			\node at (260,200) [vertex,label=below:{$27$}] (v46) {};
			
			\node at (280,200) [vertex,label=below:{$28$}] (v51) {};
			\node at (290,200) [vertex,label=below:{$29$}] (v52) {};
			\node at (300,200) [vertex,label=below:{$30$}] (v53) {};
			\node at (310,200) [vertex,label=below:{$31$}] (v54) {};
			\node at (320,200) [vertex,label=below:{$32$}] (v55) {};
			\node at (330,200) [vertex,label=below:{$33$}] (v56) {};
			
			\node at (350,200) [vertex,label=below:{$34$}g] (v61) {};
			\node at (360,200) [vertex,label=below:{$35$}] (v62) {};
			\node at (370,200) [vertex,label=below:{$36$}] (v63) {};
			\node at (380,200) [vertex,label=below:{$37$}] (v64) {};
			\node at (390,200) [vertex,label=below:{$38$}] (v65) {};
			\node at (400,200) [vertex,label=below:{$39$}] (v66) {};
			
			\path
			(v0) edge (v1)
			(v0) edge (v2)
			(v0) edge (v3)
			(v0) edge [middlearrow] (v4)
			(v0) edge [middlearrow] (v5)
			(v0) edge [middlearrow] (v6)
			
			(v1) edge (v11)
			(v1) edge (v12)
			(v1) edge [middlearrow] (v13)
			(v1) edge [middlearrow] (v14)
			(v1) edge [middlearrow] (v15)
			
			(v2) edge (v21)
			(v2) edge (v22)
			(v2) edge [middlearrow] (v23)
			(v2) edge [middlearrow] (v24)
			(v2) edge [middlearrow] (v25)
			
			(v3) edge (v31)
			(v3) edge (v32)
			(v3) edge [middlearrow] (v33)
			(v3) edge [middlearrow] (v34)
			(v3) edge [middlearrow] (v35)
			
			(v4) edge (v41)
			(v4) edge (v42)
			(v4) edge (v43)
			(v4) edge [middlearrow] (v44)
			(v4) edge [middlearrow] (v45)
			(v4) edge [middlearrow] (v46)
			
			(v5) edge (v51)
			(v5) edge (v52)
			(v5) edge (v53)
			(v5) edge [middlearrow] (v54)
			(v5) edge [middlearrow] (v55)
			(v5) edge [middlearrow] (v56)
			
			(v6) edge (v61)
			(v6) edge (v62)
			(v6) edge (v63)
			(v6) edge [middlearrow] (v64)
			(v6) edge [middlearrow] (v65)
			(v6) edge [middlearrow] (v66)
			;
		\end{tikzpicture}
		\caption{The Moore tree for $r = 3, z = 3, k = 2$}
		\label{fig:mooretree}
	\end{figure}
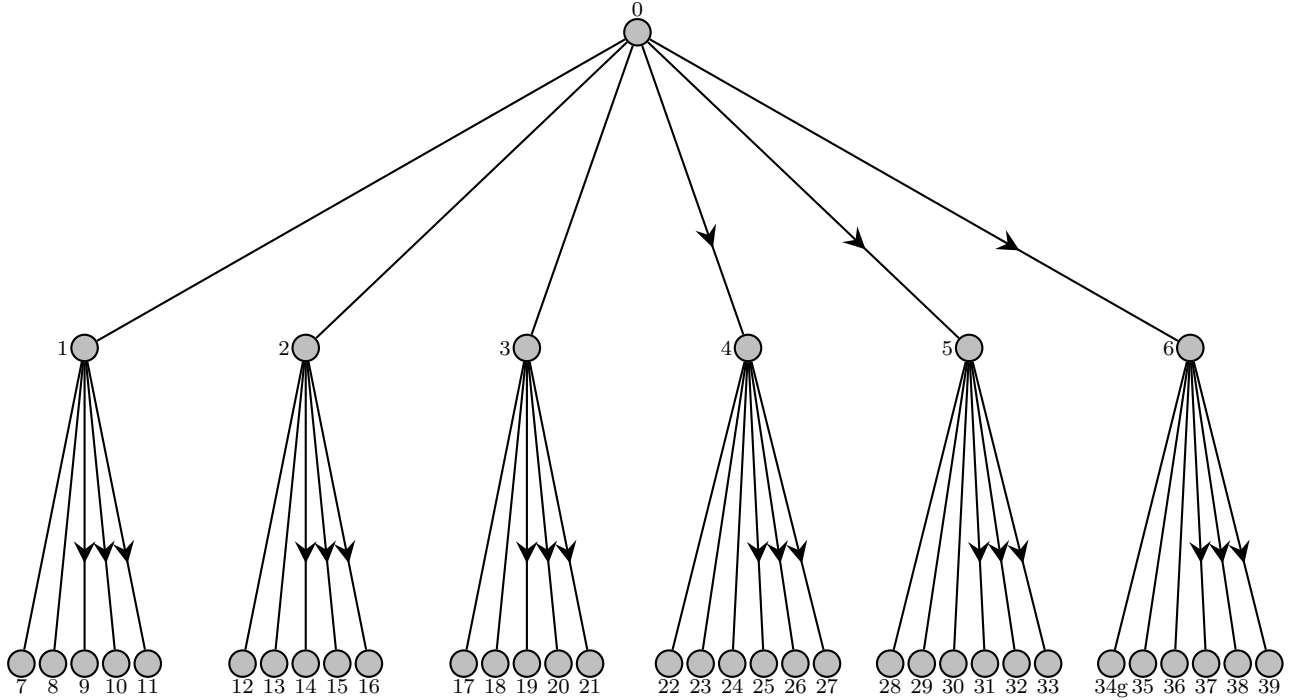
	
	Counting the number of vertices in the Moore tree therefore gives an upper bound (called the \emph{mixed Moore bound}) on the order of a mixed graph with given diameter. An exact expression for the Moore bound for mixed graphs was derived in \cite{BusAmiErsMilPer} using recurrence relations. 
	
	\begin{theorem}\cite{BusAmiErsMilPer}[Mixed Moore bound]
		The order of a mixed graph with maximum undirected degree $r$, maximum out-degree $z$ and diameter $k$ is bounded above by
		\[ M(r,z,k) = A\frac{u_1^{k+1}-1}{u_1-1}+B\frac{u_2^{k+1}-1}{u_2-1},\]
		where
		\[ v = (z+r)^2+2(z-r)+1, u_1 = \frac{z+r-1-\sqrt{v}}{2}, u_2 = \frac{z+r-1+\sqrt{v}}{2}\]
		and
		\[ A = \frac{\sqrt{v}-(z+r+1)}{2\sqrt{v}}, B = \frac{\sqrt{v}+(z+r+1)}{2\sqrt{v}}.\]  
	\end{theorem}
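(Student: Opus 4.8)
The plan is to count the vertices of the mixed Moore tree level by level, turn the level sizes into a linear recurrence, and solve that recurrence in closed form. Since every mixed graph $G$ with maximum undirected degree $\leq r$, maximum out-degree $\leq z$ and diameter $\leq k$ has all of its vertices appearing in the Moore tree of depth $k$ rooted at an arbitrary vertex, and the number of vertices adjoined at each step of the construction is maximised when the local degrees are exactly $r$ and $z$, it suffices to bound the size of the ``full'' Moore tree.

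First I would partition the vertices of level $t \geq 1$ according to whether they are reached from level $t-1$ by an arc or by an edge, writing $a_t$ and $e_t$ for the two counts and $n_t = a_t + e_t$ for the size of level $t$, and setting $n_0 = 1$. Reading off the three bullet points of the tree construction gives $a_1 = z$, $e_1 = r$, and, for $t \geq 1$, the relations $a_{t+1} = z\,n_t$ (every vertex of level $t$ contributes $z$ arc-children) and $e_{t+1} = r\,a_t + (r-1)\,e_t$ (an arc-reached vertex contributes $r$ edge-children, an edge-reached vertex only $r-1$). Substituting $a_t = z\,n_{t-1}$, which also holds at $t=1$ since $n_0 = 1$, into $n_{t+1} = a_{t+1}+e_{t+1}$ yields the second-order linear recurrence
\[ n_{t+1} = (z+r-1)\,n_t + z\,n_{t-1}, \qquad t \geq 1, \]
with initial data $n_0 = 1$ and $n_1 = r+z$.

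Next I would solve this recurrence through its characteristic equation $x^2 - (z+r-1)x - z = 0$; a direct computation shows that its discriminant is exactly $v = (z+r)^2 + 2(z-r) + 1$, so (assuming $v > 0$, the sole exceptional pair being the degenerate $r=1,\,z=0$) the roots are precisely $u_1$ and $u_2$. Hence $n_t = \alpha u_1^t + \beta u_2^t$, and imposing $\alpha + \beta = n_0 = 1$ together with $\alpha u_1 + \beta u_2 = n_1 = r+z$, and using $u_1 - u_2 = -\sqrt{v}$, gives $\alpha = A$ and $\beta = B$ after simplification. Summing over the levels then yields
\[ |V(G)| \leq \sum_{t=0}^{k} n_t = A\sum_{t=0}^{k} u_1^{\,t} + B\sum_{t=0}^{k} u_2^{\,t} = A\,\frac{u_1^{k+1}-1}{u_1-1} + B\,\frac{u_2^{k+1}-1}{u_2-1}, \]
which is the stated bound; the denominators are nonzero since $u_1 u_2 = -z$ forces $u_1,u_2 \neq 1$ whenever $z \geq 1$, and the case $z = 0$ reduces to the classical undirected Moore bound.

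All of the computation here is elementary; the only genuine care is needed in setting up the arc/edge bookkeeping and in checking that the discriminant of the recurrence really equals $v$ and that the initial conditions reproduce $A$ and $B$ exactly. I would expect the bookkeeping step — keeping straight that an edge-reached vertex loses one undirected child whereas an arc-reached vertex does not — to be the most error-prone part, so I would verify it against the small example $r=z=3$, $k=2$ of Figure~\ref{fig:mooretree} before trusting the general recurrence.
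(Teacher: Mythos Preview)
Your proof is correct and follows exactly the approach described in the paper: this theorem is not proved in the present paper at all but is cited from \cite{BusAmiErsMilPer}, where the authors state it was derived ``using recurrence relations,'' and your level-by-level recurrence $n_{t+1}=(z+r-1)n_t+zn_{t-1}$ with characteristic equation $x^2-(z+r-1)x-z=0$ is precisely that method. Indeed, the same recurrence (with your $e_t,a_t$ appearing as $U_t,Z_t$) is set up explicitly later in this paper in the proof of Theorem~\ref{bound for small excess}, confirming that your bookkeeping matches the authors' conventions.
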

	If $r = 0$ or $z = 0$ then this expression reduces to the undirected and directed Moore bounds respectively.
	
	A graph that meets the mixed Moore bound is called a \emph{mixed Moore graph}. Recall that a mixed graph $G$ is $k$-geodetic if and only if for any pair $u,v$ of vertices of $G$ there is at most one mixed path (i.e. non-backtracking mixed walk) of length $\leq k$ from $u$ to $v$ in $G$. It is easily seen that a mixed graph is Moore if and only if it satisfies the following conditions.
	
	\begin{theorem}\label{Moore conditions} 
		A mixed graph $G$ is Moore if and only if
		\begin{itemize}
			\item $G$ is totally regular with undirected degree $r$ and directed degree $z$,
			\item the diameter of $G$ is $k$, and
			\item $G$ is $k$-geodetic.
		\end{itemize} 
	\end{theorem}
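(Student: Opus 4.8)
The plan is to read off both implications from the mixed Moore tree construction of the previous section, built from each vertex of $G$ as root in turn, together with a reversed version of that construction. Write $M=M(r,z,k)$. Two properties of the tree $T_u$ rooted at $u$ will be used throughout: $T_u$ has at most $M$ vertices, with equality if and only if every vertex of $T_u$ lying at a level below $k$ --- in particular $u$ itself --- has undirected degree $r$ and out-degree $z$ (this is precisely how $M$ is computed in \cite{BusAmiErsMilPer}); and a vertex $x$ appears in $T_u$ at level $t$ exactly as often as there are mixed paths of length $t$ from $u$ to $x$, so two appearances of $x$ at levels at most $k$ amount to two distinct mixed paths of length at most $k$ from $u$ to $x$. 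For the direction ``$\Leftarrow$'' I would fix $u$ and build $T_u$: total regularity makes $T_u$ attain its maximum size $M$; since $\mathrm{diam}(G)=k$ every vertex of $G$ lies within distance $k$ of $u$ and hence occurs in $T_u$; and since $G$ is $k$-geodetic no vertex occurs more than once. Thus the level map is a bijection $V(T_u)\to V(G)$, whence $|V(G)|=M$ and $G$ is Moore.

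For ``$\Rightarrow$'' suppose $|V(G)|=M$, where $r$, $z$ and $k$ denote the maximum undirected degree, maximum out-degree and diameter of $G$. Building $T_u$ for an arbitrary $u$, every vertex of $G$ still appears in it (as $\mathrm{diam}(G)\le k$), so $|V(G)|\le|V(T_u)|\le M$, and the hypothesis $|V(G)|=M$ forces both inequalities to equalities. From $|V(T_u)|=M$ the tree has maximum size, so in particular its root satisfies $d(u)=r$ and $d^+(u)=z$; as $u$ was arbitrary, $G^U$ is $r$-regular and $G^Z$ is $z$-out-regular. From $|V(G)|=|V(T_u)|$ each vertex of $G$ occurs exactly once in $T_u$, so there is at most one mixed path of length at most $k$ from $u$ to any given vertex; as $u$ was arbitrary, $G$ is $k$-geodetic. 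Since $\mathrm{diam}(G)=k$ is part of the hypothesis, it remains only to prove $d^-(w)=z$ for every $w$.

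For this I would run the construction backwards from $w$: place $w$ at level $0$, and from a level-$t$ vertex $v$ descend to level $t+1$ along every arc $x\to v$, and along every edge $x\sim v$ --- all of them if $v$ is the root or was reached along an arc, all but the one just traversed if $v$ was reached along an edge. A branch of length $t$ of this reversed tree is exactly a mixed path of length $t$ ending at $w$ (the rule builds in the non-backtracking condition), and conversely every mixed path ending at $w$ traces such a branch; the $k$-geodecity established above rules out the only degeneracy that could interfere, namely a vertex recurring within $k$ levels, such as an arc $w\to y$ together with an edge $y\sim w$, which would yield two mixed paths of length at most $k$ from $w$ to itself. This reversed tree obeys the same recurrence as $T_u$ with in-degrees in place of out-degrees, so it has at most $M$ vertices; and by $k$-geodecity its branches correspond bijectively to the vertices $v$ with $d(v,w)\le k$, which is all of $V(G)$, so it has exactly $|V(G)|=M$ vertices. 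Being of maximum size, its root $w$ must have the maximal number of tree-children, forcing $d^-(w)=z$ (and $d(w)=r$, already known); ranging over $w$ shows $G^Z$ is diregular, so $G$ is totally regular, completing the proof. I expect this last part --- confirming that $k$-geodecity genuinely excludes degeneracies in the reversed tree and that its vertex count is pinned to $|V(G)|$ --- to be the main obstacle, the rest being routine bookkeeping with the Moore tree.
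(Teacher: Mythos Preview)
The paper does not give a proof of this theorem: it says the equivalence ``is easily seen'' and, a few paragraphs later, credits the total regularity of mixed Moore graphs to Bos\'ak~\cite{Bos}. So your argument must stand on its own. The direction ``$\Leftarrow$'' and the parts of ``$\Rightarrow$'' establishing out-regularity and $k$-geodecity are correct and are exactly the Moore-tree bookkeeping one would expect.

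The gap is in the in-regularity step. You write that the reversed tree ``obeys the same recurrence as $T_u$ with in-degrees in place of out-degrees, so it has at most $M$ vertices'', and that ``being of maximum size, its root $w$ must have the maximal number of tree-children, forcing $d^-(w)=z$''. But the bound $|V(T_u)|\le M$ for the forward tree was derived from the hypothesis that all undirected degrees are $\le r$ and all out-degrees are $\le z$; the corresponding bound for the reversed tree would require all \emph{in}-degrees to be $\le z$, which is precisely the conclusion you are after. Without such a bound there is nothing to prevent a root with $d^-(w)>z$ from having more than $r+z$ children while some deeper vertex with small in-degree compensates, leaving the total node count at $M$. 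Your bijection with $V(G)$ does correctly pin the reversed tree to exactly $M$ nodes, but knowing only that this number equals $M$ gives one equation per $w$ in the unknowns $d^-(\cdot)$, and it is not automatic that the only solution is the constant one. (For instance, already at $k=2$ the resulting relation is $(r+1)d^-(w)+\sum_{x\in N^-(w)}d^-(x)=z(2r+z+1)$, a linear system whose unique solvability is not obvious.)

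This is not a minor oversight: in-regularity is exactly the substantive content of the theorem (for $k=2$ it is Bos\'ak's result; for $k\ge 3$ the statement is essentially vacuous by Theorem~\ref{no mixed Moore graphs for k geq 3}), and its proof requires an additional idea beyond sizing the reversed tree. You rightly flag this step as ``the main obstacle'', but the mechanism you propose---maximum tree size forcing maximum root fan-out---does not close it, because ``maximum'' has no meaning here until an a~priori upper bound on in-degrees is in hand.
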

	
	Mixed Moore graphs with diameter $k = 2$ were first investigated by Bos\'{a}k in the seventies \cite{Bos2,Bos3,Bos}. In \cite{Bos} he proved that any mixed Moore graph is totally regular and used spectral methods to prove that the undirected degree $r$ and directed out-degree $z$ of a mixed Moore graph with diameter two satisfy a very special condition.
	
	\begin{theorem}\cite{Bos}\label{mixed Moore graph spectrum}\label{Bosak spectrum}
		Apart from trivial cases, if there exists a mixed Moore graph with diameter two, undirected degree $r$ and directed out-degree $z$, then there exists a positive odd integer $c$ such that	
		$c\divides(4z-3)(4z+5)$ and $c^2+3 = 4r$.
	\end{theorem}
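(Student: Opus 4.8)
The plan is the classical spectral argument for Moore-type graphs, carried over to the (non-symmetric) adjacency matrix of a mixed graph. Let $A$ be the adjacency matrix of $G$, so $A_{uv}=1$ exactly when $u\sim v$ or $u\to v$. By Theorem~\ref{Moore conditions} the graph $G$ is totally regular with undirected degree $r$ and directed degree $z$, so every row and every column of $A$ sums to $r+z$; hence $A$ commutes with the all-ones matrix $J$ and $AJ=JA=(r+z)J$. The first step is to turn the Moore property into a matrix identity. For an ordered pair $(u,v)$ with $u\neq v$, no walk of length $\leq 2$ is backtracking (a backtracking walk of length $2$ retraces an edge and so returns to its start), so the number of mixed paths of length $\leq 2$ from $u$ to $v$ is $A_{uv}+(A^2)_{uv}$, which must be $1$; for $u=v$ the unique such path is the trivial one, which forces $(A^2)_{uu}$ to equal the number $r$ of edge-backtracks at $u$ and, in particular, forbids digons and mixed $2$-cycles. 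Assembling the diagonal and off-diagonal cases gives
\[
A^2+A-(r-1)I=J .
\]

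Next I would read off the spectrum. Applying the identity to $\mathbf{1}$ gives the eigenvalue $r+z$ (multiplicity one) and the order $n=M(r,z,2)=(r+z)^2+z+1$. Multiplying the identity by $A-(r+z)I$ and using $AJ=(r+z)J$ shows that $A$ is killed by $(t-(r+z))\big(t^2+t-(r-1)\big)$; away from a handful of trivial degree choices (in particular we may assume $r\geq 1$) the three roots $r+z$ and $\lambda_{\pm}=\frac{1}{2}\big(-1\pm\sqrt{4r-3}\big)$ are distinct, so $A$ is diagonalisable with eigenvalues $r+z,\lambda_+,\lambda_-$ of multiplicities $1,m_+,m_-$ and $m_++m_-=n-1$. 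Since $G$ has no loops, $\mathrm{tr}(A)=0$, that is $(r+z)+\lambda_+m_++\lambda_-m_-=0$, and substituting $\lambda_\pm$ and $n$ rearranges this to
\[
\sqrt{4r-3}\,(m_+-m_-)=(r+z)^2-2(r+z)+z .
\]

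The endgame is an integrality argument. If $4r-3$ is not a perfect square, the left-hand side is irrational unless $m_+=m_-$, which then forces $(r+z)^2-2(r+z)+z=0$ and hence $r+z\leq 2$, one of the excluded trivial cases. Otherwise $4r-3=c^2$ for a positive integer $c$; since $c^2\equiv 1\pmod 4$ this integer is odd, and $c^2=4r-3$ is precisely $c^2+3=4r$. The multiplicities are $m_\pm=\frac12\big(n-1\pm\frac{(r+z)^2-2(r+z)+z}{c}\big)$, so they are integers only if $c\mid (r+z)^2-2(r+z)+z$. Finally, multiplying $(r+z)^2-2(r+z)+z$ by $16$ and reducing modulo $c$ (using $4r=c^2+3\equiv 3$) turns it into $(3+4z)^2-8(3+4z)+16z=16z^2+8z-15=(4z-3)(4z+5)$; as $\gcd(16,c)=1$, this gives $c\mid(4z-3)(4z+5)$, as required.

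I expect the main obstacle to be the very first step — establishing $A^2+A-(r-1)I=J$ correctly — because in a mixed graph ``non-backtracking'' forbids only the immediate retracing of an undirected edge, so one must deploy the Moore hypothesis to exclude digons and mixed $2$-cycles before the walk count is valid. A secondary technical point is justifying that the non-symmetric matrix $A$ is diagonalisable, so that $m_+$ and $m_-$ are genuine nonnegative integers; this follows by checking that the cubic annihilating polynomial has simple roots outside the trivial degree combinations. After these are settled, the spectral bookkeeping and the modular reduction are routine.
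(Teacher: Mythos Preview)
The paper does not prove this theorem; it is quoted with attribution to Bos\'ak \cite{Bos} and only the phrase ``used spectral methods'' hints at the argument. Your proposal is correct and is precisely the spectral method Bos\'ak employed: derive the identity $A^2+A-(r-1)I=J$ from the diameter-$2$ Moore condition, read off the eigenvalues $r+z$ and $\lambda_\pm=\tfrac12(-1\pm\sqrt{4r-3})$, and force $c=\sqrt{4r-3}$ to be an (odd) integer dividing $(4z-3)(4z+5)$ via the trace condition and integrality of the multiplicities. The details you give---ruling out digons and mixed $2$-cycles before asserting $(A^2)_{uu}=r$, checking that the cubic $(t-(r+z))(t^2+t-(r-1))$ has simple roots so that the non-symmetric $A$ is diagonalisable, and the reduction of $16\big((r+z)^2-2(r+z)+z\big)$ modulo $c$ using $4r\equiv 3$---are all sound.

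One small addendum you might make explicit: when $4r-3$ is positive but not a perfect square, the reason $m_+=m_-$ is not that $\sqrt{4r-3}$ is ``irrational'' per se, but that the characteristic polynomial of $A$ has integer coefficients and the quadratic $t^2+t-(r-1)$ is then irreducible over $\mathbb{Q}$, so it must appear to a single power in the factorisation of $\chi_A$. You allude to this, but since $A$ is not symmetric it is worth spelling out that one is using the integrality of $\chi_A$ rather than any orthogonality of eigenspaces.
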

	However, Theorem \ref{mixed Moore graph spectrum} leaves an infinite number of pairs $r,z$ for which the existence of a mixed Moore graph with undirected degree $r$, directed out-degree $z$ and diameter two is undecided. The smallest orders not covered by Theorem \ref{Bosak spectrum} are displayed in Table \ref{tab:values not covered by Bosak}.
	
	\begin{table}
		\begin{footnotesize}

			\begin{center}
				\begin{tabular}{|c|c|c| }
					\hline
					
					Undirected degree $r$ & Directed degree $z$ & Order $n$   \\
					\hline
					1 & any & $r^2+2r+3$ \\
					\hline
					3 & 1 & 18\\
					& 3 & 40\\
					& 4 & 54 \\
					& 6 & 88 \\
					& 7 & 108 \\
					& \dots & \dots \\
					\hline
					7 & 2 & 84 \\
					& 5 & 150 \\
					& 7 & 204 \\
					& \dots & \dots \\
					\hline
					13 & 4 & 294 \\
					& 6 & 368 \\
					& \dots & \dots \\
					\hline
					21 & 1 & 486 \\
					& \dots & \dots \\
					\hline
					\dots & \dots & \dots \\
					\hline
					
				\end{tabular}
			\end{center}
			
		\end{footnotesize}
		\caption{Values of $r$ and $z$ not covered by Theorem \ref{Bosak spectrum}}
		\label{tab:values not covered by Bosak}
	\end{table}
	
	There is one known infinite family of mixed Moore graphs with diameter two, formed by collapsing all digons in the Kautz digraph $K(d,k)$ into edges. This mixed graph can be described quite easily. Take an alphabet $\Omega $ of size $z+2$.  The vertices of $\Kautz (z)$ are words $ab$, where $a \not = b$.  For all $a,b,c \in \Omega $ with $a \not = b$ and $ b\not = c$ we introduce an arc $ab \rightarrow bc$ when $c \not = a$ and an edge $ab \sim ba$. It is easily verified that this yields a mixed Moore graph with undirected degree $r = 1$, directed out-degree $z$ and diameter $k = 2$. In fact it shown in \cite{Gim} using spectral techniques that these are the unique mixed Moore graphs with these parameters. 
	
	\begin{theorem}\cite{Gim}
		For all $z \geq 1$ there is a unique mixed Moore graph with undirected degree $r = 1$, directed out-degree $z$ and diameter $k = 2$. 	
	\end{theorem}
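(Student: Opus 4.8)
The existence part of the statement is witnessed by the graph $\Kautz(z)$ recalled above, so the plan is to prove uniqueness; essentially all of the work is in showing that a mixed Moore graph $G$ with $r=1$, directed out-degree $z$ and diameter $2$ is forced to be $\Kautz(z)$. By Theorem~\ref{Moore conditions} (Bos\'{a}k's total-regularity theorem) $G$ is totally regular, so $G^U$ is $1$-regular, i.e.\ a perfect matching --- write $u'$ for the matching partner of $u$ --- and $G^Z$ is $z$-diregular; moreover $|V(G)| = M(1,z,2) = (z+1)(z+2)$. Since $G$ is $2$-geodetic, $G^Z$ contains no digon (a digon $u\rightarrow v\rightarrow u$ together with the trivial walk would give two mixed paths of length $\leq 2$ from $u$ to $u$), and by the definition of a mixed graph no edge of $G$ is parallel to an arc.

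The first step is to pass to a purely directed model. Let $D$ be the digraph on $V(G)$ obtained by replacing each edge $\{u,u'\}$ of $G^U$ with the two arcs $u\rightarrow u'$, $u'\rightarrow u$ and retaining the arcs of $G^Z$; then $D$ is $(z+1)$-diregular on $(z+1)(z+2)$ vertices. Let $A$ be its adjacency matrix. The claim is that $A^2+A=J$. For an ordered pair $(u,v)$ with $u\neq v$, the walks of length $1$ or $2$ in $D$ from $u$ to $v$ correspond bijectively to the mixed paths of length $\leq 2$ in $G$ from $u$ to $v$ --- the only walks of $D$ that do not arise from non-backtracking mixed walks of $G$ traverse one of the new digons and return straight back, hence end at their starting vertex --- so by the Moore property there is exactly one, giving $(A^2+A)_{uv}=1$. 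For $u=v$, using that $G^Z$ has no digon, that $G^U$ is a perfect matching, and that no edge of $G$ is parallel to an arc, one checks that $u$ lies on exactly one $2$-cycle of $D$, so $(A^2)_{uu}=1$ and $(A^2+A)_{uu}=1$ as well.

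From $A^2+A=J$ I would extract the key combinatorial fact. Reading off row $u$ of the identity yields, for every vertex $u$,
\[
\chi_{N^+_D(u)} \;+\; \sum_{v\in N^+_D(u)} \chi_{N^+_D(v)} \;=\; \mathbf{1},
\]
a sum of $0/1$-vectors equal to the all-ones vector; hence the $z+2$ sets $N^+_D(u)$ and $N^+_D(v)$ (for $v\in N^+_D(u)$) are pairwise disjoint, cover $V(D)$, and in particular are pairwise distinct, so distinct out-neighbours of a vertex have distinct out-neighbourhoods, each of size $z+1$. The heart of the proof is to upgrade this to Heuchenne's line-digraph criterion for $D$: whenever two vertices of $D$ have a common out-neighbour, their out-neighbourhoods coincide. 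Equivalently, one must show that $D$ has exactly $z+2$ distinct out-neighbourhoods and that these partition $V(D)$ (which is what happens for $D$ when $G=\Kautz(z)$). Establishing this --- by comparing the partition of $V(D)$ coming from one base vertex with those coming from others and deriving a contradiction if there were more than $z+2$ out-neighbourhoods --- is the main obstacle.

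Granting Heuchenne's criterion, the rest is short. By Heuchenne's theorem $D\cong L(H)$, where the vertices of $H$ are the distinct out-neighbourhoods of $D$ and the arcs of $H$ are in bijection with $V(D)$; the out-degree in $H$ of a vertex $S$ equals $|S|=z+1$, so $H$ is $(z+1)$-out-regular, loopless (since $D$ has no loops), and has no parallel arcs (two distinct vertices of $D$ with identical out- and in-neighbourhoods would be distinct out-neighbours of a common in-neighbour and hence, as noted above, would have distinct out-neighbourhoods, a contradiction). A $(z+1)$-out-regular loopless simple digraph on $z+2$ vertices is the complete digraph $K^{*}_{z+2}$, so $D\cong L(K^{*}_{z+2})$. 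Finally, the $2$-cycles of $L(K^{*}_{z+2})$ are precisely the pairs $\{ab,ba\}$, one through each vertex, and under the isomorphism these correspond to the digons introduced in the construction of $D$; collapsing them back into undirected edges recovers $G$, so $G\cong\Kautz(z)$. (The argument in~\cite{Gim} is cast more spectrally: one shows that $A$ has spectrum $\{\,z+1,\ 0^{(z^2+2z)},\ (-1)^{(z+1)}\,\}$ and decomposes $A=A_Z+A_U$ with $A_U$ the permutation matrix of the matching and $A_U^2=I$; the combinatorial content --- the line-digraph identification --- is the same.)
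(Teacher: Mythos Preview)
The paper does not give its own proof of this theorem: it is quoted from~\cite{Gim} with only the remark that the result is obtained there ``using spectral techniques''. There is therefore no in-paper argument to compare your proposal against; what can be assessed is whether your outline stands on its own.

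Your reductions are sound. Total regularity forces $G^U$ to be a perfect matching; replacing each edge by a digon yields a $(z+1)$-diregular digraph $D$ on $(z+1)(z+2)$ vertices; and your verification that $A^2+A=J$ (including the diagonal entry via the unique $2$-cycle through each vertex) is correct. The closing identification $D\cong L(K^{*}_{z+2})$, and hence $G\cong\Kautz(z)$, is also correct \emph{once} Heuchenne's criterion is in hand.

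The proposal is not a proof, however, because you explicitly leave the decisive step open: you call establishing Heuchenne's criterion ``the main obstacle'' and then proceed by ``granting'' it. That gap is substantive. The partition property you extract from $A^2+A=J$ --- that for each $u$ the sets $N^+_D(u)$ and $N^+_D(v)$ for $v\in N^+_D(u)$ partition $V(D)$ --- shows that the $z+2$ out-neighbourhoods appearing in any one such partition are pairwise distinct, but it does \emph{not} by itself force these to be the only out-neighbourhoods that occur globally. Your suggested route (``comparing the partition \ldots\ coming from one base vertex with those coming from others'') is asserted rather than executed, and carrying it out is essentially the whole content of the uniqueness theorem. Gimbert's argument in~\cite{Gim} handles exactly this point via a more detailed spectral/structural analysis; absent that, what you have written is a correct reformulation of the problem together with the easy parts of the proof, with the hard part still missing.
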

	In \cite{Bos} Bos\'{a}k identified a further mixed Moore graph with undirected degree $r = 3$, directed out-degree $z = 1$, diameter $k = 2$ and order $M(3,1,2) = 18$.  The uniqueness of this graph was proven in \cite{NguMilGim}. 
	
	One method of searching for mixed Moore graphs is to restrict the search space to Cayley mixed graphs. By carrying out a computer search for Cayley mixed graphs that meet the Moore bound J\o rgensen found two Cayley mixed Moore graphs with undirected degree $r = 3$, directed out-degree $z = 7$, diameter $k = 2$ and order $n = 108$ \cite{Jor}. However, it has been shown that there are no further Cayley mixed Moore graphs with diameter two and order $\leq 485$ \cite{Ers,LopPerPuj}. A search using a SAT solver has also completely ruled out the existence of mixed Moore graphs with diameter two and orders $40, 54$ or $84$ \cite{LopMirFer}. 
	
	It is natural to ask whether there exist any mixed Moore graphs with diameter greater than two? It was shown by a counting argument in \cite{NguMilGim} that the answer to this question is negative, except in trivial cases. 
	
	\begin{theorem}\cite{NguMilGim}\label{no mixed Moore graphs for k geq 3}
		There are no mixed Moore graphs with diameter $k \geq 3$, except for undirected and directed cycles.  	
	\end{theorem}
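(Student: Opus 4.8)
The plan is to dispose of the two degenerate cases and then, for a genuinely mixed Moore graph, push through a spectral argument of the type Bos\'ak used for diameter two, exploiting the extra rigidity of $k\ge 3$. If $z=0$ then $G$ is an undirected Moore graph, and by the theorems of Bannai--Ito and Damerell the only such graphs of diameter $k\ge 3$ are the cycles $C_{2k+1}$; if $r=0$ then $G$ is a directed Moore graph, and by classical results (Plesn\'ik--Zn\'am, Bridges--Toueg) the only such graphs of diameter $k\ge 2$ are the directed cycles. So assume $r\ge 1$ and $z\ge 1$; by Theorem~\ref{Moore conditions} $G$ is totally regular with undirected degree $r$, directed degree $z$, diameter $k$, and is $k$-geodetic.

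I would first record the structural consequences of $k$-geodecity. Since $k\ge 2$, $G$ has no digons and no mixed digons (a $2$-cycle through one arc and one edge), as either gives a second mixed path of length $\le 2\le k$; more generally $G$ has no closed non-backtracking walk of length $\ell$ with $1\le \ell\le k$. Hence the directed part $G^{Z}$ has directed girth $>k$, and the undirected part $G^{U}$ has girth $>2k$: a cycle of length $\le 2k$ in $G^{U}$ contains two vertices joined along it by two distinct mixed paths, each of length $\le k$. Writing $B,C$ for the adjacency matrices of $G^{U},G^{Z}$, we have $A=B+C$, $B=B^{T}$, and $\mathrm{tr}(A)=0$, $\mathrm{tr}(A^{2})=rn$, $\mathrm{tr}(A^{3})=0$ (no loops, no di- or mixed digons, no mixed triangles).

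Next, set up the defining identity. Let $P_{i}$ count mixed paths of length $i$; then $P_{0}=I$, $P_{1}=A$, $P_{2}=A^{2}-rI$, and --- since backtracking involves only the undirected edges --- $P_{i+1}=P_{i}A-(r-1)P_{i-1}-P_{i-2}C$ for $i\ge 2$. The Moore property is exactly
\[ \sum_{i=0}^{k}P_{i}=J. \]
Since the row and column sums of $A$, $B$, $C$ are constant, $\mathbf{1}$ is a left and right eigenvector of each, so $\mathbf{1}^{\perp}$ is invariant under all three; writing $\hat X$ for the restriction to $\mathbf{1}^{\perp}$ and using $\hat J=0$, the identity becomes $\sum_{i=0}^{k}\hat P_{i}=0$. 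The heart of the argument is to convert this into a polynomial relation in $\hat A$ alone. For $k=3$ this is immediate: expanding the recurrence gives
\[ \hat A^{3}+\hat A^{2}+(1-2r)\hat A+\hat B+(1-r)\hat I=0, \]
so $\hat B$ is a polynomial in $\hat A$, hence $B$ commutes with $A$ (modulo $J$) and with $C=A-B$; then $A,B,C$ are simultaneously triangularisable and the non-principal eigenvalues of $A$ are roots of a fixed polynomial $\Phi_{r,z,k}$ of degree $\le k$. For general $k\ge 3$ I would establish the analogue inductively, expanding $\sum_{i=0}^{k}P_{i}$ so that the top-order terms collapse into powers of $A$ and using the girth bounds (and, if necessary, the reverse-graph symmetry $G\mapsto G^{\mathrm{rev}}$, which is again Moore with the same parameters) to reduce the surviving non-commutative words in $B$ and $C$ until only a single linear occurrence of $B$ remains.

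With the spectrum of $A$ confined in this way, the proof finishes in the classical manner: $A$ has the simple eigenvalue $r+z$ and $n-1$ further eigenvalues, all roots of $\Phi_{r,z,k}$, occurring with non-negative integer multiplicities summing to $n-1$; combining this with $\mathrm{tr}(A)=0$, $\mathrm{tr}(A^{2})=rn$, $\mathrm{tr}(A^{3})=0,\dots$ and with elementary parity constraints --- for instance $r$ odd forces $n$ even by the handshake lemma, which already kills $(r,z,k)=(1,1,3)$ since $M(1,1,3)=11$ --- yields a Diophantine problem whose only solutions for $k\ge 3$ correspond to cycles. The main obstacle is exactly the passage to a polynomial relation in $\hat A$: for $k=2$ the correction term in the Moore identity is a scalar multiple of $I$, so that identity is already polynomial in $A$, whereas for $k\ge 3$ the undirected matrix $B$ re-enters, and retaining enough control of the resulting non-commutative expressions to isolate $B$ --- and so to pin down the eigenvalues of $A$ --- is where the real difficulty lies. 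An alternative, purely combinatorial route would compare the closed-walk counts $\mathrm{tr}(A^{\ell})$, evaluated directly from the Moore tree, with the value of $n=M(r,z,k)$ forced by the Moore-tree recurrence, deriving the same incompatibility for $k\ge 3$.
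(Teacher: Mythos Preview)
Your spectral route diverges from the argument the paper cites and carries a real gap. The paper does not reprove this theorem but attributes it to \cite{NguMilGim} as ``a counting argument'', and then generalises precisely that argument in Theorem~\ref{bound for small excess}. The idea is short and uniform in $k$: fix a vertex $u$ and take an \emph{arrow vertex} $x$ in the Moore tree --- a vertex lying in an undirected branch $T(u_i)$, reached via an arc, at depth $\le k-1$. All $r$ undirected neighbours of $x$ together with the one directed in-neighbour through which $x$ was reached already lie in $T(u_i)$, so only $z-1$ of the $r+z$ in-neighbours of $x$ are left over. Since $\operatorname{diam}(G)=k$, each of the $z$ directed out-neighbours of $u$ must reach $x$ by a $\le k$-path; as $x$ itself lies in $T(u_i)$ and cannot be repeated, each directed branch must contain an in-neighbour of $x$. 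That requires $z$ in-neighbours for $z$ directed branches but only $z-1$ remain, forcing a repetition in the Moore tree and contradicting $k$-geodecity. For $k\ge 3$ and $r,z\ge 1$ arrow vertices exist (e.g.\ $u\sim u_1\to x$ at depth $2\le k-1$), and that is the entire proof.

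Your plan, by contrast, is clean only at $k=3$: the identity $\hat B=-\hat A^{3}-\hat A^{2}-(1-2r)\hat A-(1-r)\hat I$ is correct there, though you never carry out the Diophantine endgame you announce. Already at $k=4$ your own recurrence gives $P_4=A^{4}-(3r-2)A^{2}+r(r-1)I-CA-AC$, and the Moore identity on $\mathbf 1^{\perp}$ becomes $\hat B+\hat A\hat B+\hat B\hat A=q(\hat A)$, a Sylvester-type equation that does not force $\hat B$ to be a polynomial in $\hat A$ (nor $\hat A$ and $\hat B$ to commute). For larger $k$ the non-commutative words in $B$ and $C$ only proliferate, so the ``inductive'' passage you invoke fails at the first instance beyond $k=3$. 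The closed-walk trace alternative you mention at the end is also not the \cite{NguMilGim} argument; the actual proof bypasses spectra and traces entirely via the arrow-vertex count above.
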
  
	Whilst there remain an infinite number of open cases, it is evident that it is very difficult for a mixed graph to meet the mixed Moore bound. In general the mixed Moore tree of depth $k$ will either not contain all vertices of $G$ (in which case the diameter of $G$ is larger than $k$) or there will be vertices repeated in the Moore tree (in which case $G$ is not $k$-geodetic). It is therefore of interest to study the structure of mixed graphs with order close to the mixed Moore bound. To this end in the conditions in Theorem \ref{Moore conditions} we can either relax the requirement that all of the vertices in the Moore tree be distinct or the requirement that the Moore tree contains all of the vertices of $G$. This motivates the following definitions.
	
	\begin{definition}
		~
		\begin{itemize}
			\item A mixed graph with maximum undirected degree $r$, maximum directed out-degree $z$, diameter $k$ and order $M(r,z,k)-\delta $ is called an \emph{$(r,z,k;-\delta)$-graph} and has \emph{defect} $\delta $. A mixed graph with defect one is called an \emph{almost mixed Moore graph}.
			\item A $k$-geodetic mixed graph with minimum undirected degree $r$, minimum directed out-degree $z$ and order $M(r,z,k)+\epsilon $ is called an \emph{$(r,z,k;+\epsilon )$-graph} and has \emph{excess} $\epsilon $. The smallest possible value of $\epsilon $ such that there exists an $(r,z,k;+\epsilon )$-graph will be written $\epsilon (r,z,k)$. We set $N(r,z,k) = M(r,z,k)+\epsilon (r,z,k)$. 
		\end{itemize}	
	\end{definition}

	A graph with defect $\delta = 1$ is called an \emph{almost mixed Moore graph}.  It was shown by the present authors that any $(r,z,2;-1)$- or $(r,z,2;+1)$-graph must be totally regular \cite{TuiErs}.  L\'{o}pez and Miret used spectral theory to derive the following necessary condition for the existence of an almost mixed Moore graph with diameter $k = 2$ in \cite{LopMir}.
	
	\begin{theorem}
		Let $G$ be a (totally regular) $(r,z,2;-1)$-graph.  Then $r$ is even and one of the following three possibilities holds:
		\newline i) $r = 2$,
		\newline ii) there exists an odd integer $c$ such that $c^2 = 4r+1$ and $c\divides(4z+1)(4z-7)$, or
		\newline iii) there exists an odd integer $c$ such that $c^2 = 4r-7$ and $c\divides(16z^2+40z-23)$.
	\end{theorem}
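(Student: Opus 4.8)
The plan is to extend Bos\'ak's spectral method for diameter-two mixed Moore graphs to the defect-one case. Write $n = |V(G)| = M(r,z,2)-1 = z+(r+z)^2$ and let $A = B + C$ be the adjacency matrix of $G$, where $B = B^{\top}$ is the adjacency matrix of $G^U$ and $C$ that of $G^Z$. Since $G$ is totally regular (this is the cited result that $(r,z,2;-1)$-graphs are totally regular), $B\mathbf{1} = r\mathbf{1}$ and $C\mathbf{1} = C^{\top}\mathbf{1} = z\mathbf{1}$, so $\mathbf{1}$ is a left and right eigenvector of $A$ for the eigenvalue $r+z$ and the hyperplane $\mathbf{1}^{\perp}$ is $A$-invariant. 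The only backtracking walks of length two are of the form $u\sim w\sim u$, so the number of mixed paths of length at most two from $u$ to $v$ is the $(u,v)$-entry of $I + A + (A^2 - rI)$. Building the mixed Moore tree from an arbitrary vertex and using $\mathrm{diam}(G)=2$ together with $|V(G)| = M(r,z,2)-1$, exactly one vertex of $G$ is repeated in the tree, so this count equals $1$ for every ordered pair of vertices except for exactly one target vertex per source; hence
\[ A^2 + A - (r-1)I = J + P, \]
where $P$ is the $0$--$1$ matrix of the ``repeat'' map $\sigma$, every row sum of $P$ being $1$. A short counting argument (using total regularity and $2$-geodecity, as in \cite{TuiErs,LopMir}) shows that the column sums are also $1$, so $\sigma$ is a permutation and $P$ is a permutation matrix.

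Restricting the displayed identity to $\mathbf{1}^{\perp}$, where $J$ vanishes, gives $P = A^2 + A - (r-1)I$ on $\mathbf{1}^{\perp}$; since $P^h = I$ for $h$ the order of $\sigma$, it follows that every eigenvalue $\lambda$ of $A$ other than $r+z$ satisfies $\lambda^2 + \lambda - (r-1) = \omega$ for some root of unity $\omega$ (an eigenvalue of $P$), i.e. $\lambda = \tfrac{-1\pm\sqrt{4r-3+4\omega}}{2}$. The numerical inputs are the first two power traces: $\mathrm{tr}(A) = 0$ because $G$ has no loops, and $\mathrm{tr}(A^2) = nr$ because $2$-geodecity forbids both digons and the coexistence of an edge $uv$ with an arc $v\to u$, so that $A^2$ and $B^2$ have the same diagonal. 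Substituting $\mathrm{tr}_{\mathbf{1}^{\perp}}(A) = -(r+z)$ and $\mathrm{tr}_{\mathbf{1}^{\perp}}(A^2) = nr-(r+z)^2$ into the identity on $\mathbf{1}^{\perp}$ gives $\mathrm{tr}_{\mathbf{1}^{\perp}}(P) = n + r - 1 - (r+z)^2 - (r+z) = -1$, so $\sigma$ is fixed-point-free; pushing this analysis to higher traces constrains the multiplicities of the eigenvalues of $P$.

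The heart of the proof is a Galois-integrality analysis of the candidate eigenvalues $\tfrac{-1\pm\sqrt{4r-3+4\omega}}{2}$. Each eigenvalue $\lambda\neq r+z$ of the integer matrix $A$ is an algebraic integer, so all of its conjugates are eigenvalues of $A$ with the same multiplicity; combining this with the fact that conjugate roots of unity occur as eigenvalues of the rational matrix $P$ with equal multiplicity, with the trace identities above, and with the requirement that all eigenvalue multiplicities be non-negative integers, forces the only roots of unity occurring as eigenvalues of $P$ to be $\omega = 1$ and $\omega = -1$, apart from the degenerate configuration which resolves to $r = 2$, giving (i). For $\omega = 1$ one needs $4r+1 = c^2$, and for $\omega = -1$ one needs $4r-7 = c^2$; in either case $c^2$ is odd, so $c$ is odd, hence $c^2\equiv 1\pmod 8$ and $r$ is even. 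Writing $m^{+},m^{-}$ for the multiplicities of the two rational eigenvalues $\tfrac{c-1}{2},\tfrac{-c-1}{2}$ attached to the square value, the trace equation on $\mathbf{1}^{\perp}$ reduces to $c\,(m^{+}-m^{-}) = (n-1)-2(r+z) = (r+z)^2 - 2r - z - 1$. Substituting $n = z+(r+z)^2$ and reducing this integer modulo $c$ using $4r = c^2-1$ (respectively $4r = c^2+7$) transforms the integrality condition $c\divides\big((r+z)^2-2r-z-1\big)$ into exactly $c\divides(4z+1)(4z-7)$ (respectively $c\divides(16z^2+40z-23)$), which are cases (ii) and (iii).

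The main obstacle is the step in the last paragraph that eliminates all roots of unity other than $\pm 1$. In the undirected case this is immediate, since $A$ is symmetric, hence $P = A^2 + A - (r-1)I - J$ is symmetric, hence an involution; but here $A$ is not symmetric and $G$ need not be isomorphic to its converse, so one must instead argue with the Galois action on the algebraic integers $\tfrac{-1\pm\sqrt{4r-3+4\omega}}{2}$, the boundedness of the spectrum of $A$ (spectral radius $r+z$, since $A$ is non-negative with constant row sums), and a careful accounting of multiplicities, to show that no root of unity of order $\geq 3$ is compatible with the two trace constraints together with integrality. Verifying that the repeat map is a genuine permutation, and isolating the $r = 2$ degenerate branch, are secondary points that nonetheless require the total-regularity and $2$-geodecity hypotheses.
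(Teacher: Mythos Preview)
The paper does not supply its own proof of this theorem: it is quoted from L\'opez and Miret \cite{LopMir} and stated without argument, so there is no in-paper proof to compare against. Your outline is exactly the Bos\'ak/L\'opez--Miret spectral approach that underlies the cited result, and the arithmetic you give is correct: the defect-one identity $A^2+A-(r-1)I=J+P$ holds, the trace computations yield $\mathrm{tr}_{\mathbf 1^\perp}P=-1$ (so $\sigma$ is fixed-point-free), and the reductions of $(r+z)^2-2r-z-1$ modulo $c$ do produce $(4z+1)(4z-7)$ and $16z^2+40z-23$ in the two cases. Note also that $P=A^2+A-(r-1)I-J$ already has all row \emph{and} column sums equal to $1$ directly from total regularity and $n=(r+z)^2+z$, so no separate counting argument is needed there.

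What is missing is precisely the step you flag as the ``main obstacle'', and your sketch of it is not yet a proof. Two concrete issues. First, restricting to $\omega\in\{\pm1\}$ still allows up to four eigenvalues on $\mathbf 1^\perp$, namely $\tfrac{-1\pm\sqrt{4r+1}}{2}$ and $\tfrac{-1\pm\sqrt{4r-7}}{2}$; your trace identity $c(m^+-m^-)=(r+z)^2-2r-z-1$ tacitly assumes only one pair is present. The clean way out is to observe that $4r+1$ and $4r-7$ differ by $8$, so both are perfect squares only when $r=2$; hence outside case (i) at most one pair can be rational, and then Galois-closure of the spectrum of the integer matrix $A$ forces all eigenvalues on $\mathbf 1^\perp$ into that single pair. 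Second, the exclusion of roots of unity of order $\ge 3$ needs an explicit argument: one shows that if some primitive $m$-th root $\omega$ occurs then its whole Galois orbit does, that the resulting eigenvalues $\tfrac{-1\pm\sqrt{4r-3+4\omega}}{2}$ are irrational algebraic integers whose minimal polynomial has degree $>2$, and that matching multiplicities under the Galois action against the two trace constraints and the bound on the spectral radius leaves no room. This is the substantive content of \cite{LopMir}; your paragraph gestures at the right tools but does not carry the argument through.
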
  
	Using the methods of L\'{o}pez and Miret it is possible to show the following result, which we state without proof.

	\begin{theorem}\label{spectral theorem}
		Let $G$ be a totally regular $(r,z,2,+1)$-graph.  Then either:
		\newline $r = 2$;
		\newline $4r+1 = c^2$ for some $c \in \mathbb{N}$ and $c\divides(16z^2-24z+25)$; or
		\newline $4r-7 = c^2$ for some $c \in \mathbb{N}$ and $c\divides(16z^2+40z+9)$.
	\end{theorem}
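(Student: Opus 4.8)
\emph{Sketch of proof.} The plan is to follow the spectral method of L\'opez and Miret for the defect case. Let $A=A^{U}+A^{Z}$ be the adjacency matrix of $G$ and let $J$ be the all-ones matrix. Total regularity gives $A\mathbf{1}=A^{T}\mathbf{1}=(r+z)\mathbf{1}$, so $AJ=JA=(r+z)J$ and $\mathbf{1}^{\perp}$ is $A$-invariant. Since $G$ is $2$-geodetic it contains no digons, and the number of mixed paths of length at most $2$ from $u$ to $v$ equals the $(u,v)$-entry of $I+A+(A^{2}-rI)$. Out-regularity and $2$-geodecity force every Moore tree of depth $2$ to contain exactly $M(r,z,2)$ distinct vertices, so, as $|V(G)|=M(r,z,2)+1$, for each $u$ there is a unique vertex at distance greater than $2$ from $u$; applying the same count to the converse digraph gives the dual statement, whence the $0$--$1$ matrix $C$ recording the ordered pairs at distance greater than $2$ has all row and all column sums equal to $1$. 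Thus $C$ is a fixed-point-free permutation matrix and
\[ A^{2}+A+(1-r)I=J-C. \]

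First I would deduce $AC=CA$ by multiplying this identity on the left and on the right by $A$ and using $AJ=JA$ together with the fact that $A$ commutes with every polynomial in $A$. As $C$ is a permutation matrix it is unitary, so $\mathbb{C}^{n}=\bigoplus_{\omega}W_{\omega}$ is an orthogonal direct sum of $C$-eigenspaces (with eigenvalues roots of unity), each $A$-invariant, and $\mathbf{1}\in W_{1}$. Restricting the identity to $W_{\omega}\cap\mathbf{1}^{\perp}$, where $J$ vanishes, gives $A^{2}+A+(1-r+\omega)I=0$ there; since the discriminant $4r-4\omega-3$ is non-zero for every root of unity $\omega$, $A$ acts diagonalisably on $W_{\omega}\cap\mathbf{1}^{\perp}$ with the two eigenvalues $\frac{1}{2}\bigl(-1\pm\sqrt{4r-4\omega-3}\bigr)$. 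Hence the spectrum of $A$ consists of the simple eigenvalue $r+z$ and the values $\lambda^{\pm}_{\omega}=\frac{1}{2}(-1\pm\sqrt{4r-4\omega-3})$ with non-negative integer multiplicities $a_{\omega},b_{\omega}$, where $\omega$ ranges over the eigenvalues of $C$ on $\mathbf{1}^{\perp}$ and $\sum_{\omega}(a_{\omega}+b_{\omega})=n-1$.

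Next I would impose the trace identities $\mathrm{tr}\,A=0$ (no loops) and $\mathrm{tr}\,A^{2}=rn$ (no digons); writing $e_{\omega}=a_{\omega}-b_{\omega}$, the first rearranges to
\[ \sum_{\omega}e_{\omega}\sqrt{4r-4\omega-3}\;=\;M(r,z,2)-2(r+z). \]
The heart of the proof, and the step I expect to be the main obstacle, is to show --- following the method of L\'opez and Miret --- that this relation, combined with the facts that the characteristic polynomial of $A$ lies in $\mathbb{Z}[x]$ (so that the non-real surds $\sqrt{4r-4\omega-3}$ attached to primitive $d$-th roots of unity with $d\geq 3$ enter the sum only through the rational traces of their Galois orbits) and that all multiplicities $a_{\omega},b_{\omega}$ are non-negative, forces either $r=2$, or $4r+1$ a perfect square (the contribution of the eigenvalue $\omega=-1$), or $4r-7$ a perfect square (the contribution of $\omega=1$). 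In the last two cases write $4r+1=c^{2}$, respectively $4r-7=c^{2}$, with $c\in\mathbb{N}$; then $c$ is odd, the eigenvalues $\frac{1}{2}(-1\pm c)$ of $A$ are integers, and the displayed trace identity collapses to $c\divides\bigl(M(r,z,2)-2(r+z)\bigr)$, whereas when $r=2$ both $4r+1=9$ and $4r-7=1$ are already squares and no divisibility is needed.

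Finally I would evaluate this divisibility. From $M(r,z,2)=(r+z)^{2}+z+1$ one has $M(r,z,2)-2(r+z)=(r+z)^{2}-2(r+z)+z+1$. Substituting $4r=c^{2}-1$ gives $16\bigl(M(r,z,2)-2(r+z)\bigr)\equiv 16z^{2}-24z+25\pmod{c}$, so (as $c$ is odd) $c\divides(16z^{2}-24z+25)$; substituting $4r=c^{2}+7$ gives instead $16\bigl(M(r,z,2)-2(r+z)\bigr)\equiv 16z^{2}+40z+9\pmod{c}$, so $c\divides(16z^{2}+40z+9)$. Together with the case $r=2$ this is exactly the claimed trichotomy. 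As a consistency check, running the same computation with $C=0$ recovers Bos\'ak's condition ($4r-3=c^{2}$, $c\divides(4z-3)(4z+5)$) and running it with $J+C$ in place of $J-C$ recovers L\'opez and Miret's conditions for almost mixed Moore graphs.
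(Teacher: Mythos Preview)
The paper does not actually prove this theorem; it explicitly states the result ``without proof'', remarking only that it follows ``using the methods of L\'opez and Miret''. So there is no proof in the paper to compare your attempt against---your sketch already supplies more detail than the paper does, and it follows precisely the route the paper gestures at.

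The parts of your sketch that are spelled out are correct. The matrix identity $A^{2}+A+(1-r)I=J-C$ with $C$ a fixed-point-free permutation matrix is exactly what the paper records (in the proof of its Theorem~\ref{automorphism}) for $k=2$; the commutation $AC=CA$, the eigenvalue formula $\lambda_\omega^{\pm}=\tfrac12(-1\pm\sqrt{4r-4\omega-3})$, and the trace rearrangement $\sum_\omega e_\omega\sqrt{4r-4\omega-3}=M(r,z,2)-2(r+z)$ are all right. Your final modular arithmetic reducing $16\bigl(M(r,z,2)-2(r+z)\bigr)$ modulo $c$ to $16z^{2}-24z+25$ (when $4r+1=c^{2}$) and to $16z^{2}+40z+9$ (when $4r-7=c^{2}$) is correct, and your consistency checks against Bos\'ak and against L\'opez--Miret's defect conditions also come out right.

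The step you yourself flag as the main obstacle is genuinely the crux and is not yet established in your sketch. Your Galois/integrality remarks show that each orbit of $\omega$'s contributes rationally to the trace sum, and hence that $e_{1}\sqrt{4r-7}$ and $e_{-1}\sqrt{4r+1}$ are separately rational; but this alone does not exclude the scenario $e_{1}=e_{-1}=0$ with neither $4r-7$ nor $4r+1$ a square and the right-hand side $M(r,z,2)-2(r+z)$ coming entirely from cycles of length $\geq 3$ in the outlier permutation. Closing this gap is precisely where the full L\'opez--Miret machinery is needed---typically via further trace identities (you mention $\mathrm{tr}\,A^{2}=rn$ but do not yet use it) together with a combinatorial constraint forcing the outlier automorphism to be an involution, so that only $\omega=\pm 1$ occur. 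Once that is in hand, your divisibility computation finishes the job; note incidentally that for $r\neq 2$ at most one of $4r+1,\,4r-7$ can be a perfect square (their difference is $8=3^{2}-1^{2}$), which is why $r=2$ sits as a separate case.
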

	Mixed graphs with defect and excess one are displayed in Figures \ref{fig:almost} and \ref{fig:excessone} respectively. The latter is the only known mixed graph with excess one.

	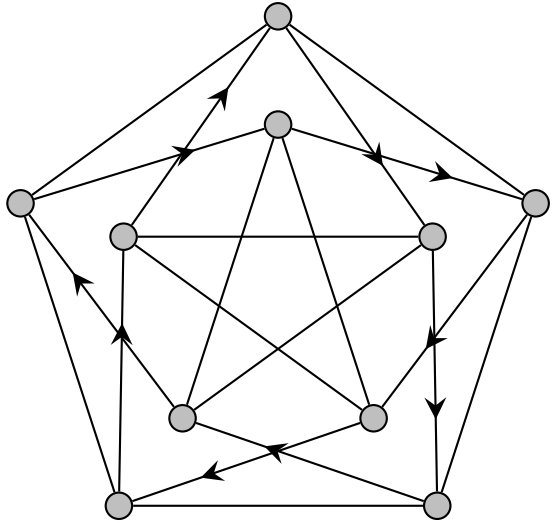
\begin{figure}
		\centering
		\begin{tikzpicture}[middlearrow=stealth,x=0.2mm,y=-0.2mm,inner sep=0.2mm,scale=1,thick,vertex/.style={circle,draw,minimum size=10,fill=lightgray}]
			\node at (380,200) [vertex] (v1) {};
			\node at (208.8,324.4) [vertex] (v2) {};
			\node at (274.2,525.6) [vertex] (v3) {};
			\node at (485.8,525.6) [vertex] (v4) {};
			\node at (551.2,324.4) [vertex] (v5) {};
			\node at (380,272) [vertex] (v6) {};
			\node at (316.5,467.4) [vertex] (v7) {};
			\node at (482.7,346.6) [vertex] (v8) {};
			\node at (277.3,346.6) [vertex] (v9) {};
			\node at (443.5,467.4) [vertex] (v10) {};
			\path
			(v1) edge (v2)
			(v1) edge (v5)
			(v2) edge (v3)
			(v3) edge (v4)
			(v4) edge (v5)
			(v6) edge (v7)
			(v6) edge (v10)
			(v7) edge (v8)
			(v8) edge (v9)
			(v9) edge (v10)
			(v2) edge [middlearrow] (v6)
			(v6) edge [middlearrow] (v5)
			(v5) edge [middlearrow] (v10)
			(v10) edge [middlearrow] (v3)
			(v3) edge [middlearrow] (v9)
			(v9) edge [middlearrow] (v1)
			(v1) edge [middlearrow] (v8)
			(v8) edge [middlearrow] (v4)
			(v4) edge [middlearrow] (v7)
			(v7) edge [middlearrow] (v2)
			;
		\end{tikzpicture}
		\caption{A mixed almost Moore graph}
		\label{fig:almost}
	\end{figure}

	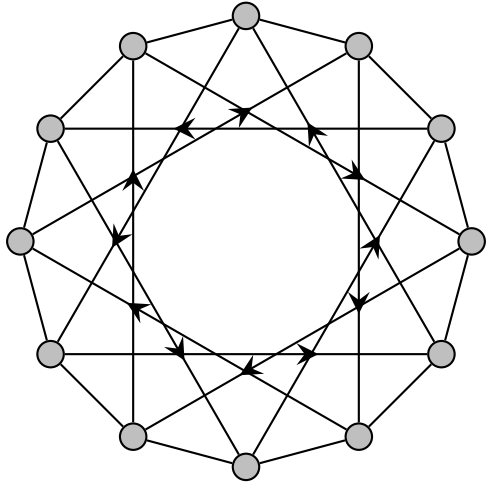
\begin{figure}[h]
		\centering
		\begin{tikzpicture}[middlearrow=stealth,x=0.2mm,y=-0.2mm,inner sep=0.2mm,scale=0.5,thick,vertex/.style={circle,draw,minimum size=10,fill=lightgray}]
			\node at (300,0) [vertex] (v0) {};
			\node at (259.8,150) [vertex] (v1) {};
			\node at (150,259.8) [vertex] (v2) {};
			\node at (0,300) [vertex] (v3) {};
			\node at (-150,259.8) [vertex] (v4) {};
			\node at (-259.8,150) [vertex] (v5) {};
			\node at (-300,0) [vertex] (v6) {};
			\node at (-259.8,-150) [vertex] (v7) {};
			\node at (-150,-259.8) [vertex] (v8) {};
			\node at (0,-300) [vertex] (v9) {};
			\node at (150,-259.8) [vertex] (v10) {};
			\node at (259.8,-150) [vertex] (v11) {};
			\path
			(v0) edge (v1)
			(v1) edge (v2)
			(v2) edge (v3)
			(v3) edge (v4)
			(v4) edge (v5)
			(v5) edge (v6)
			(v6) edge (v7)
			(v7) edge (v8)
			(v8) edge (v9)
			(v9) edge (v10)
			(v10) edge (v11)
			(v11) edge (v0)
			
			(v0) edge [middlearrow] (v4)
			(v4) edge [middlearrow] (v8)	
			(v8) edge [middlearrow] (v0)
			(v1) edge [middlearrow] (v9)
			(v9) edge [middlearrow] (v5)	
			(v5) edge [middlearrow] (v1)
			(v2) edge [middlearrow] (v6)
			(v6) edge [middlearrow] (v10)	
			(v10) edge [middlearrow] (v2)
			(v3) edge [middlearrow] (v11)
			(v11) edge [middlearrow] (v7)	
			(v7) edge [middlearrow] (v3)
			
			;
		\end{tikzpicture}
		\caption{A mixed graph with excess one}
		\label{fig:excessone}
	\end{figure}
	If $G$ is an $(r,z,k;-\delta )$-graph, then there will be exactly $\delta $ repetitions in the Moore tree of depth $k$ based at any vertex $u$.  We form a multiset $R(u)$, with a vertex $v$ appearing $t-1$ times in $R(u)$ if it appears $t$ times in the Moore tree; $R(u)$ is called the \emph{repeat set} of $u$.  Similarly if $G$ is an $(r,z,k;+\epsilon )$-graph then all vertices appearing in the Moore tree based at a vertex $u$ will be distinct, but the tree will not contain all vertices of $G$.  If $G$ is totally regular, then there will be exactly $\epsilon $ vertices $v$ satisfying $d(u,v) \geq k+1$; any such $v$ is an \emph{outlier} of $u$ and the set $O(u)$ of all outliers of $u$ is the \emph{outlier set} of $u$ (observe that there are no repetitions in $O(u)$).  For given $r,z$ and $k$ a mixed graph with smallest possible excess is an \emph{$(r,z,k)$-geodetic-cage}, or a \emph{geodetic cage} if the values of the parameters can be inferred from the context (we insert `geodetic' to distinguish such constructions from extremal cycle-avoiding mixed graphs, which are already called cages \cite{AraHerMon}).

	\section{Existence of cages}\label{Existence of cages}
	
	One subtle point that does not arise in the degree/diameter problem is that it is not immediately clear that cages exist for all values of the degree $d$ and girth $g$; therefore it is necessary to prove that for any $d \geq 2$ and $g \geq 3$ there exists a graph with degree $d$ and girth $g$.  This result was first shown by Sachs in \cite{Sac} using a recursive construction. The upper bound in \cite{Sac} was subsequently improved in a joint paper by Sachs and Erd\H{o}s \cite{ErdSac}.  An approachable presentation of these proofs is given in an appendix of \cite{ExoJaj}. 
	
	Our first step is therefore to show that geodetic cages exist for all values of the geodetic girth $k$ and the degree parameters $r$ and $z$. In the purely directed case we obtain the existence of geodetic cages and a good estimate of their order almost for free from a nice family of digraphs called the \emph{permutation digraphs}. These digraphs were first mentioned in \cite{Fio} and their properties further developed in \cite{BruFioFio}. These digraphs are defined as follows.

	\begin{definition}
		For $d,k \geq 2$ the vertex set of the permutation digraph $P(d,k)$ consists of all sequences $x_0x_1\dots x_{k-1}$ of length $k$ drawn from an alphabet $[d+k] = \{ 0,1,2,\dots , d+k-1\} $ such that for $0 \leq i < j \leq k-1$ we have $x_i \not = x_j$. 
		
		The adjacencies of $P(d,k)$ are defined by \[ x_0x_1\dots x_{k-1} \rightarrow x_1x_2\dots x_{k-1}x_k,\] where $x_k \in ([d+k] - \{ x_0,x_1,\dots , x_{k-1}\} )$.    
	\end{definition}
	
	It is shown in \cite{BruFioFio} that permutation digraphs are highly symmetric. The symmetric group on $d+k$ symbols acts on $P(d,k)$ in a natural way by permuting the symbols of the underlying alphabet, meaning that they are arc-transitive, although not 2-arc-transitive. The symmetry groups of the permutation digraphs are derived and the Cayley permutation digraphs classified in \cite{BruFioFio}. The important property of the permutation digraphs from our point of view is that $P(d,k)$ is $k$-geodetic and for fixed $k \geq 2$ the digraphs $P(d,k)$ have order approaching the directed Moore bound $M(d,k)$ asymptotically from above. The digraph $P(2,2)$ is displayed in Figure \ref{fig:P22}.
	
	\begin{lemma}\label{permutation order}
		For $d,k \geq 2$ the permutation digraph $P(d,k)$ is diregular with degree $d$, has geodetic girth $k$ and has order \[ \Perm{d+k}{k} = (d+k)(d+k-1)\dots(d+1).\] Hence for fixed $k \geq 2$ the excess of $P(d,k)$ is \[ ((d+k)(d+k-1)\dots (d+1)) - (d^k+d^{k-1}+\dots +d+1) \sim \left(\frac{k(k+1)}{2}-1\right )d^{k-1} \] as $d \rightarrow \infty $.
	\end{lemma}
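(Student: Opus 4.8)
The plan is to check the four assertions separately: the order, the diregularity with degree $d$, the value of the geodetic girth, and then the excess and its asymptotics, which will come out by a routine expansion. For the order, a vertex of $P(d,k)$ is precisely an injective word of length $k$ over the alphabet $[d+k]$, so there are exactly $(d+k)(d+k-1)\cdots(d+1)=\Perm{d+k}{k}$ of them. For the degree, from $x_0x_1\dots x_{k-1}$ one may append any of the $(d+k)-k=d$ letters of $[d+k]$ not already occurring, so the out-degree is $d$; symmetrically, the in-neighbours of $y_0y_1\dots y_{k-1}$ are exactly the words $x_0y_0y_1\dots y_{k-2}$ with $x_0\notin\{y_0,\dots,y_{k-1}\}$ (the exclusion of $y_{k-1}$ being forced by the adjacency rule), again $d$ choices, so the in-degree is $d$ too and $P(d,k)$ is diregular of degree $d$.

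The substantive part is the geodetic girth. I would encode a directed walk of length $\ell\ge 0$ starting at $u=x_0x_1\dots x_{k-1}$ by the word $x_0x_1\dots x_{k+\ell-1}$ that it ``spells out'': its $i$-th vertex is the length-$k$ window $x_ix_{i+1}\dots x_{i+k-1}$, the adjacency rule being equivalent to $x_{k+i}\notin\{x_i,x_{i+1},\dots,x_{k+i-1}\}$ for $0\le i\le\ell-1$, whence an easy induction shows that every window is a genuine vertex; the terminal vertex is $v=x_\ell x_{\ell+1}\dots x_{k+\ell-1}$. First, for a fixed length $\ell\le k$ there is at most one walk of that length from $u$ to a given $v$, since the index sets $\{0,\dots,k-1\}$ and $\{\ell,\dots,k+\ell-1\}$ together cover $\{0,1,\dots,k+\ell-1\}$ once $\ell\le k$, so $u$, $v$ and $\ell$ determine the spelling word, hence the walk. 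It remains to exclude two walks of genuinely different lengths $\ell_1<\ell_2\le k$ from $u$ to the same $v$ (taking $\ell_1=0$ here simultaneously rules out closed walks of length at most $k$): I compare the first letter of $v$, which along the $\ell_1$-walk is the letter of $u$ in position $\ell_1$ (as $\ell_1\le k-1$), while along the $\ell_2$-walk it is the letter of $u$ in position $\ell_2$ if $\ell_2\le k-1$, and otherwise ($\ell_2=k$) it is the first appended letter, which the initial adjacency step forbids from lying in $u$ at all; since the letters of $u$ are pairwise distinct, both alternatives are contradictory. Hence $P(d,k)$ is $k$-geodetic (as also noted in~\cite{BruFioFio}). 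Finally, $P(d,k)$ is not $(k+1)$-geodetic: for any vertex $u=x_0x_1\dots x_{k-1}$ and any of the $d\ge 2$ letters $x_k\notin\{x_0,\dots,x_{k-1}\}$, the periodic word $x_0x_1\dots x_kx_0x_1\dots x_{k-1}$ is readily checked to spell a valid closed walk of length $k+1$ from $u$ to $u$, and distinct choices of $x_k$ give distinct walks, so there is more than one mixed path of length $\le k+1$ from $u$ to itself. Thus the geodetic girth of $P(d,k)$ is exactly $k$.

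The excess is then $\Perm{d+k}{k}-(d^k+d^{k-1}+\dots+d+1)$ by definition, the bracket being the directed Moore bound. For the asymptotics I would expand $(d+k)(d+k-1)\cdots(d+1)=d^k\prod_{j=1}^{k}(1+j/d)=d^k+\frac{k(k+1)}{2}d^{k-1}+O(d^{k-2})$ and subtract $d^k+d^{k-1}+O(d^{k-2})$, leaving $\left(\frac{k(k+1)}{2}-1\right)d^{k-1}+O(d^{k-2})$; as $k\ge 2$ the leading coefficient $\frac{k(k+1)}{2}-1$ is positive, which gives both that the order lies above the Moore bound and the claimed relation $\sim\left(\frac{k(k+1)}{2}-1\right)d^{k-1}$ as $d\to\infty$. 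I expect the only delicate point to be the geodecity paragraph: since $k$-geodecity quantifies over all lengths $\le k$ at once, uniqueness within each fixed length is not by itself enough, so the cross-length comparison of the first letter of $v$ (which also disposes of the short-cycle condition) is the crucial move, together with the separate verification that $(k+1)$-geodecity genuinely fails.
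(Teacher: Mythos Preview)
Your proof is correct and follows essentially the same route as the paper: both hinge on the observation that the first letter of the terminal vertex pins down the length of any $\le k$-walk from a fixed $u$, after which $u$ and $v$ together determine the entire walk. The paper invokes vertex-transitivity to normalise $u=01\dots(k-1)$ while you work directly with the spelling-word encoding, but the argument is the same; you are also more thorough in that you explicitly verify the order, the in-degree, and the asymptotic expansion of the excess, all of which the paper's proof leaves implicit.
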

	\begin{proof}
		For all $d,k \geq 2$ the digraph $P(d,k)$ contains directed cycles of length $k+1$, for example
		\[ 012\dots (k-1) \rightarrow 12\dots (k-1)k \rightarrow 23\dots(k-1)k0 \rightarrow \dots \rightarrow k01\dots (k-2) \rightarrow 012\dots(k-1),\] so the geodetic girth of $P(d,k)$ is certainly $\leq k$. 
		
		By vertex-transitivity of $P(d,k)$, to prove $k$-geodecity it is sufficient to demonstrate that if $P$ and $Q$ are $\leq k$-paths in $P(d,k)$ from the vertex $012\dots (k-1)$ to a vertex $x_0x_1\dots x_{k-1}$, then $P = Q$. All vertices at distance $r \leq k-1$ from $012\dots (k-1)$ have first symbol $r$, whereas all vertices at distance $k$ from $012\dots (k-1)$ have a first symbol that does not lie in $\{ 0,1,\dots ,k-1\} $.  As $d(01\dots (k-1),x_0x_1\dots x_{k-1}) \leq k$ by assumption, it follows that if $x_0 \in \{ 0,1,\dots ,k-1\} $ then both $P$ and $Q$ have length $x_0$, whereas if $x_0 \not \in \{ 0,1,\dots ,k-1\} $ then both $P$ and $Q$ must have length $k$; in either case $l(P) = l(Q)$. 
		
		If $x_0 = r \in \{ 0,1,\dots ,k-1\} $ then the only path with length $r$ from $01\dots (k-1)$ to $x_0x_1\dots x_{k-1}$ is the path with initial vertex $01\dots (k-1)$ obtained by successively deleting the symbol $i$ on the left-hand side and adding the symbol $x_{k-r+i}$ on the right for $i = 0,1,\dots ,r-1$. If $x_0 \not \in \{ 0,1,\dots ,k-1\} $, then the first arc $e$ of both $P$ and $Q$ must be $012\dots (k-1)\rightarrow 12 \dots (k-1)x_0$.  Deleting the arc $e$ from $P$ and $Q$ leaves two paths $P'$ and $Q'$ of length $k-1$ from $12\dots (k-1)x_0$ to $x_0x_1\dots x_{k-1}$; by the above reasoning $P' = Q'$ and hence $P = Q$.  
	\end{proof}

	\begin{figure}\centering
		\begin{tikzpicture}[midarrow=stealth,x=0.2mm,y=-0.2mm,inner sep=0.5mm,scale=2,
			thick,vertex/.style={circle,draw,minimum size=10,font=\footnotesize,fill=white},edge label/.style={fill=white}]
			\tiny
			\node at (0,100) [vertex] (v0) {10};
			\node at (-86.6025,50) [vertex] (v1) {31};
			\node at (-86.6025,-50) [vertex] (v2) {12};
			\node at (0,-100) [vertex] (v3) {01};
			\node at (86.6025,-50) [vertex] (v4) {13};
			\node at (86.6025,50) [vertex] (v5) {21};
			\node at (-35,60.6218) [vertex] (v6) {03};
			\node at (-70,0) [vertex] (v7) {23};
			\node at (-35,-60.6218) [vertex] (v8) {20};
			\node at (35,-60.6218) [vertex] (v9) {30};
			\node at (70,0) [vertex] (v10) {32};
			\node at (35,60.6218) [vertex] (v11) {02};
			
			\path
			(v0) edge [midarrow] (v6)
			(v0) edge [midarrow] (v11)
			(v1) edge [midarrow] (v0)
			(v1) edge [midarrow] (v2)
			(v2) edge [midarrow] (v7)
			(v2) edge [midarrow] (v8)
			(v3) edge [midarrow] (v2)
			(v3) edge [midarrow] (v4)
			(v4) edge [midarrow] (v9)
			(v4) edge [midarrow] (v10)
			(v5) edge [midarrow] (v4)
			(v5) edge [midarrow] (v0)
			(v6) edge [midarrow] (v1)
			(v6) edge [midarrow] (v10)
			(v7) edge [midarrow] (v1)
			(v7) edge [midarrow] (v9)
			(v8) edge [midarrow] (v3)
			(v8) edge [midarrow] (v6)
			(v9) edge [midarrow] (v3)
			(v9) edge [midarrow] (v11)
			(v10) edge [midarrow] (v5)
			(v10) edge [midarrow] (v8)
			(v11) edge [midarrow] (v5)
			(v11) edge [midarrow] (v7)
			
			;

		\end{tikzpicture}
		\caption{$P(2,2)$}
		\label{fig:P22}
	\end{figure}
	Lemma \ref{permutation order} shows that a $(0,z,k)$-geodetic cage exists for all values of $z,k \geq 1$. As the permutation digraphs are diregular, we see that for $d,k \geq 1$ there is also a smallest possible diregular $k$-geodetic digraph with degree $d$. By combining this construction with that of Sachs \cite{Sac} for undirected graphs we can show the existence of mixed geodetic cages for all $r,z,k \geq 1$.

	\begin{theorem}\label{mixed cages exist}
		There exists a mixed geodetic $(r,z,k)$-cage for all $r, z, k \geq 1$.
	\end{theorem}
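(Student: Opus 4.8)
The plan is to separate the two issues hidden in the statement: that \emph{some} $k$-geodetic mixed graph with minimum undirected degree $r$ and minimum directed out-degree $z$ exists, and that among all such graphs there is one of least order. The second follows from the first: building a mixed Moore tree of depth $k$ with branching parameters $r$ and $z$ from any vertex of such a graph $G$ and invoking $k$-geodecity shows that all vertices of the tree are distinct, whence $|V(G)| \ge M(r,z,k)$. So the orders of such graphs form a non-empty set of integers bounded below by $M(r,z,k)$, and any graph attaining its minimum is an $(r,z,k)$-geodetic-cage. It therefore suffices to construct, for each $r,z,k \ge 1$, one totally regular $k$-geodetic mixed graph with undirected degree $r$ and directed degree $z$.

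For the directed skeleton I would use the permutation digraphs. By Lemma~\ref{permutation order}, for $k \ge 2$ the permutation digraph $P(z,k)$ is $z$-diregular and $k$-geodetic; for $k = 1$ one can instead take the circulant $\Cay(\Z_n;\{1,2,\dots,z\})$ with $n > 2z$, which is $z$-diregular and, being simple and digon-free, $1$-geodetic. Moreover the skeleton can be made as large as we please without losing these properties: the tensor product of a $z$-diregular $k$-geodetic digraph $D_0$ with a directed cycle $C_m$ — with an arc $(u,i) \to (v,i+1)$ whenever $u \to v$ in $D_0$ — is again $z$-diregular, and every directed walk in it projects to a directed walk of the same length in $D_0$, so it remains $k$-geodetic. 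Fix such a digraph $D$, of order $n_0$ to be chosen large below, and regard it as a (totally regular) $k$-geodetic mixed graph with undirected degree $0$.

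The undirected edges are then added one degree at a time, in the spirit of the recursive cage construction of Sachs~\cite{Sac}. Given a totally regular $k$-geodetic mixed graph $G$ with undirected degree $\rho$, out-degree $z$ and order $n$, take two disjoint copies $G^{(1)},G^{(2)}$ and a bijection $\phi\colon V(G)\to V(G)$, and form $G_\phi$ by adding the perfect matching $\{\,v^{(1)} \sim \phi(v)^{(2)} : v \in V(G)\,\}$. Then $G_\phi$ is simple, totally regular with undirected degree $\rho+1$ and out-degree $z$, and has order $2n$. A non-backtracking mixed walk of length $\le k$ in $G_\phi$ alternates between maximal sub-walks inside a single copy (which are non-backtracking mixed walks in a copy of $G$) and single crossings of matching edges, and any excursion that leaves a copy and returns to it has length at least $3$, because a matching edge cannot be retraced immediately and each vertex lies on a unique matching edge. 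Using that $G$ is $k$-geodetic, one checks that a failure of $k$-geodecity of $G_\phi$ therefore forces a short ``coincidence'' inside $G$ — in the simplest case vertices $v \ne w$ of $G$ with $d_G(v,w) \le k$ and $d_G(\phi(v),\phi(w)) \le k-2$, together with a bounded list of further configurations of the same type arising from walks with several crossings. Iterating this step $r$ times starting from $G = D$ yields a totally regular $k$-geodetic mixed graph with undirected degree $r$ and out-degree $z$, provided only that a suitable $\phi$ can be found at each stage.

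The main obstacle is exactly the existence of such a $\phi$, and here I would run an Erd\H{o}s--Sachs-type argument~\cite{ErdSac}. Since $D$ — and hence every $G$ occurring in the recursion — may be taken of order vastly larger than any fixed function of $r,z,k$, the number of pairs $(v,w)$ with $d_G(v,w) \le k$ is at most $n\,(M(r,z,k)-1)$, while for a uniformly random bijection $\phi$ the probability that $d_G(\phi(v),\phi(w)) \le k-2$ is $O(1/n)$; the analogous count for configurations with two or more crossings carries an extra factor $O(1/n)$ and so contributes nothing in expectation. Consequently some $\phi$ produces only a bounded number of ``bad'' configurations, and each of these can be destroyed, without creating a new one, by transposing the matching edges at a suitably chosen pair of vertices — there are $n$ candidate transpositions and only $O(1)$ of them need to be avoided. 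Repeating until no bad configuration survives gives the required $\phi$, completing the induction; the case $k = 1$ can be verified directly.
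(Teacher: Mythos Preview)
Your argument is correct in outline, but the paper's proof is considerably shorter and takes a quite different route. Rather than building up the undirected degree one at a time via a probabilistic matching-and-repair procedure, the paper uses a single substitution step. Let $H$ be an undirected graph of degree $r$ and girth $2k+1$ (these exist by Sachs~\cite{Sac}), with $n = |V(H)|$, and let $H'$ be a $k$-geodetic digraph with out-degree $nz$ (these exist by Lemma~\ref{permutation order}). One then blows up each vertex $u$ of $H'$ into a copy $H_u$ of $H$: partition the $nz$ out-arcs of $u$ in $H'$ into $n$ sets of size $z$, assign one set to each vertex of $H_u$, and direct each such arc to an arbitrary vertex of the appropriate target copy. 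The resulting mixed graph has minimum undirected degree $r$ and minimum out-degree $z$, and is $k$-geodetic essentially because any two $\leq k$-paths project to the same walk in $H'$, hence use the same arcs, and the undirected segments between consecutive arcs then lie in a graph of girth $2k+1$ with fixed endpoints and length $\leq k$, so coincide.

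What your approach buys is a degree of self-containment: you avoid invoking Sachs's theorem as a black box, instead running an Erd\H{o}s--Sachs-style argument directly in the mixed setting. The price is a substantially more delicate analysis --- you must enumerate all bad-configuration types for walks with multiple crossings and justify the transposition-repair step, both of which you leave somewhat sketchy. In particular, your claim that configurations with ``two or more crossings'' carry an extra factor $O(1/n)$ is not quite right as stated: when the endpoints lie in different copies and each of the two walks crosses exactly once, the expected count is again $\Theta(1)$, not $o(1)$; only from four total crossings onward does the extra factor appear. The conclusion that the expected number of bad configurations is $O(1)$ survives, and the repair step can indeed be made to work since each transposition excludes only boundedly many candidate vertices, but this all needs more care than you give it. The paper's blow-up construction sidesteps these issues entirely by packaging the undirected part into a single girth-$(2k+1)$ graph, making the $k$-geodecity check essentially immediate.
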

	\begin{proof}
		We employ a truncation argument.  Let $H$ be an undirected cage with degree $r$, girth $g = 2k+1$ and order $n$.  Let $H'$ be a directed geodetic cage with geodetic girth $k$ and directed out-degree $nz$.  We form a mixed graph $G$ by identifying each vertex $u$ of $H'$ with an isomorphic copy $H_u$ of $H$ and connecting the copies of $H$ by arcs in accordance with the topology of $H'$; specifically, for each vertex $u$ of $H'$ partition the $nz$ arcs from $u$ in $H'$ into $n$ sets $A_1,A_2,\dots ,A_n$ of $z$ arcs and assign a set $A_i$ of arcs to each of the $n$ vertices in $H_u$, such that if an arc in $A_i$ goes to a vertex $v$ in $H'$, then in $G$ it is directed to any vertex of $H_v$. The resulting mixed graph $G$ obviously has geodetic girth $k$.  A similar construction starting with directed cages substituted for vertices of an undirected cage establishes the other part of the theorem.
		
	\end{proof}
	
	As in the undirected degree/girth problem, the bounds given in Theorem \ref{mixed cages exist} are much too large to be of any practical help. We also note that by using regular graphs with girth $2k+1$ (which exist by \cite{Sac}) and diregular $k$-geodetic digraphs (we can use the permutation digraphs), the truncation argument in Theorem \ref{mixed cages exist} also shows the existence of a smallest totally regular $(r,z,k;+\epsilon )$-graph.
	
	\begin{corollary}\label{smallest totally regular mixed graph}
		For all $r,z \geq 1$ and $k \geq 2$ there exists a smallest totally regular $(r,z,k;+\epsilon )$-graph.
	\end{corollary}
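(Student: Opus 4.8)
\medskip
\noindent\textbf{Proof plan.}\ The statement combines two claims: that at least one totally regular $k$-geodetic mixed graph with undirected degree $r$ and directed degree $z$ exists, and that among all such graphs one has minimum order. The second claim is immediate once the first is known, since the orders of totally regular $k$-geodetic mixed graphs with undirected degree $r$ and directed degree $z$ then form a non-empty set of positive integers and so attain a minimum (indeed that minimum is at least $M(r,z,k)$, because $k$-geodecity makes the vertices of the Moore tree rooted at any vertex pairwise distinct). So the real work is to construct a single totally regular $(r,z,k;+\epsilon)$-graph, and I would do this by running the truncation construction of Theorem~\ref{mixed cages exist} with \emph{diregular} ingredients in place of the cages.

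Concretely, fix an $r$-regular graph $H$ of girth at least $2k+1$ --- for $r \geq 2$ these exist by Sachs~\cite{Sac}, and for $r = 1$ one may take $H = K_2$, which has no cycles at all --- and set $n = |V(H)|$. Next take $H' = P(nz,k)$; this is defined since $nz \geq 2$ and $k \geq 2$, and by Lemma~\ref{permutation order} it is diregular of degree $nz$ and $k$-geodetic. Now form $G$ exactly as in Theorem~\ref{mixed cages exist}: give each vertex $u$ of $H'$ a private copy $H_u$ of $H$; for each $u$, split the $nz$ out-arcs of $H'$ at $u$ into blocks of size $z$, one block per vertex of $H_u$, and \emph{independently} split the $nz$ in-arcs of $H'$ at $u$ into blocks of size $z$, one block per vertex of $H_u$; and finally replace each arc $(u,v)$ of $H'$ by the arc of $G$ running from the vertex of $H_u$ to which it was assigned on the out-side to the vertex of $H_v$ to which it was assigned on the in-side.

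The verification has an easy half and a substantive half. Total regularity is routine: every vertex of $G$ lies in a copy of the $r$-regular graph $H$ and so has undirected degree $r$, while the two independent splittings give it directed out-degree $z$ and directed in-degree $z$; moreover $G$ has no loops and no parallel arcs, since $H'$ has none and since the copies containing the tail and the head of an arc of $G$ recover the arc of $H'$ from which it arose (so distinct arcs of $H'$ produce distinct arcs of $G$). The substantive half is $k$-geodecity, which I would establish in the manner intended by the word ``obviously'' in the proof of Theorem~\ref{mixed cages exist}: a mixed path of length $\leq k$ in $G$ decomposes at its arcs into pieces each lying inside a single copy of $H$, and contracting the copies sends its sequence of arcs to a directed walk of length $\leq k$ in $H'$ running from the copy of its start vertex to the copy of its end vertex. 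Given two mixed paths $P$ and $Q$ of length $\leq k$ with the same endpoints, $k$-geodecity of $H'$ forces their associated directed walks in $H'$ to coincide; as $H'$ has no parallel arcs and the arcs of $G$ are in bijection with those of $H'$, this forces $P$ and $Q$ to traverse the same arcs of $G$ in the same order; and then the pieces of $P$ and of $Q$ lying between consecutive arcs (including the initial and final pieces) are non-backtracking walks of length $\leq k$ with the same endpoints inside a single copy of $H$, so they coincide because that copy has girth at least $2k+1$ --- the union of two distinct such walks would contain a cycle of length at most $2k$. Hence $P = Q$ and $G$ is $k$-geodetic.

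The main obstacle, such as it is, lies entirely in this last step, where the points to nail down are: that the in-side splitting may be chosen freely and independently of the out-side splitting (it may, each being merely a partition of the arc set of $H'$); that this makes the arcs of $G$ correspond bijectively with the arcs of $H'$, so that a directed walk in $H'$ lifts to a \emph{unique} sequence of arcs in $G$; and the classical fact that a graph of girth at least $2k+1$ admits at most one non-backtracking walk of length $\leq k$ between any given ordered pair of vertices. Each of these is elementary, but all three are needed for the projection argument to close.
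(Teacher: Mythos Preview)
Your proposal is correct and follows essentially the same route as the paper: run the truncation construction of Theorem~\ref{mixed cages exist} with a regular graph of girth $\geq 2k+1$ (from Sachs, or $K_2$ when $r=1$) and a diregular $k$-geodetic digraph (a permutation digraph), then appeal to well-ordering. You have in fact filled in details the paper leaves implicit --- the independent partition of in-arcs needed for total regularity and the projection argument for $k$-geodecity --- but the underlying idea is identical.
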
 
	
	Now that the existence of mixed geodetic cages has been established, the question of monotonicity arises.  Intuition suggests that the order of a cage should grow strictly with increasing $r, z$ and $k$.  Monotonicity of the order of cages in the undirected degree/girth problem was proven by Fu, Huang and Rodger \cite{FuHuaRod} and degree monotonicity of undirected cages was discussed in \cite{WanYu}, but appears to be a difficult problem.  We generalise the approach of \cite{FuHuaRod} to prove strict monotonicity of the order of mixed cages in the geodetic girth $k$. The following proof also applies to purely directed geodetic cages.  
	
	\begin{theorem}\label{mixed monotonicity k}
		$N(r,z,k) < N(r,z,k+1)$ for all $k \geq 2$.
	\end{theorem}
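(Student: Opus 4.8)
The plan is to prove the easy direction and then obtain strictness by a surgery that shrinks a cage of geodetic girth $k+1$. First, $N(r,z,k)\le N(r,z,k+1)$ is immediate, since a $(k+1)$-geodetic mixed graph is $k$-geodetic, so an $(r,z,k+1)$-geodetic-cage is in particular a $k$-geodetic mixed graph with minimum undirected degree $\ge r$ and minimum out-degree $\ge z$. For the strict inequality I would take an $(r,z,k+1)$-geodetic-cage $G$, on $N(r,z,k+1)$ vertices, and construct from it a $k$-geodetic mixed graph $G'$ with minimum undirected degree $\ge r$, minimum out-degree $\ge z$ and $|V(G')|<|V(G)|$; this yields $N(r,z,k)\le|V(G')|<|V(G)|=N(r,z,k+1)$.

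Before the construction I would record what $(k+1)$-geodecity gives us, using $k\ge 2$: the graph $G$ contains no non-trivial closed non-backtracking walk of length $\le k+1$ (in particular no digons and no directed cycle of length $\le k+1$); and whenever two distinct vertices $a,b$ are joined in $G$ by a mixed path $W$ of length $\le k+1$, then $W$ is the unique such path, so if $W$ runs through a vertex $m$ then deleting $m$ forces $d_{G-m}(a,b)\ge k+2$. I would also use that $G$ is large, its order being at least $M(r,z,k+1)$.

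The construction generalises that of \cite{FuHuaRod}. I would delete from $G$ a vertex $m$ chosen so that the neighbourhoods requiring repair have even size (when the undirected subgraph of $G$ is regular of odd degree one deletes instead a pair of adjacent vertices), and then repair the resulting degree deficiencies by adding a matching $F$ of new edges and arcs, each of which ``shadows'' a length-$2$ mixed path of $G$ through the deleted vertex: the undirected neighbours of the deleted vertex are paired by new edges $a\sim b$ shadowing $a\sim m\sim b$, and each in-neighbour $p$ of the deleted vertex receives a new arc $p\to q$ to a private out-neighbour $q$ of the deleted vertex, shadowing $p\to m\to q$ (a counting argument on $d^--d^+$, together with the largeness of $G$, provides enough targets). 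By the first observation none of the added edges or arcs was already present in $G$, $F$ is genuinely a matching, and the resulting graph $G'$ has minimum undirected degree $\ge r$, minimum out-degree $\ge z$, and at most $|V(G)|-1$ vertices.

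It remains to prove that $G'$ is $k$-geodetic, and this is the step I expect to be the main obstacle. Suppose $G'$ had two distinct mixed paths $P,Q$ of length $\le k$ from $a$ to $b$. If one of them, say $P$, uses two or more elements of $F$, let $e_1,e_2$ be the first two of these and $R$ the portion of $P$ between them; since $F$ is a matching, $R$ is a mixed path of $G-m$ with $|R|\ge 1$, and the sub-path $e_1Re_2$ of $P$ has length $2+|R|\le k$. Replacing $e_1$ and $e_2$ by their shadow detours produces a closed non-backtracking walk through $m$ of length $|R|+2\in[3,k]$, a non-trivial closed non-backtracking walk of length $\le k+1$, which $G$ does not have (and which cannot occur at all when $k=2$). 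Hence each of $P,Q$ uses at most one element of $F$; replacing that element by its detour turns $P$ and $Q$ into mixed paths $\widetilde P,\widetilde Q$ of $G$ of length $\le k+1$ — non-backtracking because every splice vertex lies in $G'$ and so differs from $m$, and distinct because a path through $m$ cannot equal a path avoiding $m$ and because distinct paths of $G'$ cannot unfold to the same path of $G$ (one recovers $P$ from $\widetilde P$ by contracting the detour). This gives two distinct mixed paths of $G$ of length $\le k+1$ from $a$ to $b$, contradicting $(k+1)$-geodecity. The truly delicate part of the argument is the degree-repair bookkeeping: arranging that every new edge or arc genuinely shadows a short detour through the deleted vertex while keeping $F$ a matching and all deficiency counts even; once this is in place the geodecity verification goes through as above, and the identical argument with arcs in place of edges handles purely directed geodetic cages.
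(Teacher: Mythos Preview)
Your approach is essentially the paper's: delete a vertex (or an adjacent pair when every undirected degree is odd), repair the degrees with new edges and arcs that shadow length-two detours through the deleted vertex, and check $k$-geodecity via the dichotomy ``some path uses two new elements'' versus ``each path uses at most one''. The one place you diverge is in insisting that $F$ be a matching on the arc side (each $p\in Z^-(m)$ to a \emph{private} $q\in Z^+(m)$), which forces the vague ``counting argument on $d^--d^+$'' that you yourself flag as the truly delicate part. The paper sidesteps this entirely: it simply sends each $p\in Z^-(u)$ to \emph{some} $q\in Z^+(u)$, allowing repeated targets. Your fear that $|R|\ge 1$ then fails is unfounded, because $(k{+}1)$-geodecity with $k\ge 2$ forces $U(u)$, $Z^-(u)$, $Z^+(u)$ to be pairwise disjoint; hence the head of one new element can be the tail of another only when both are edges of the matching on $U(u)$, which would make $P$ backtrack. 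So drop the private-target requirement and the bookkeeping disappears. One further detail for the two-vertex case: when consecutive new elements shadow detours through \emph{different} deleted vertices $u$ and $v$, you obtain not a closed walk but a mixed path of length $\le k$ from $u$ to $v$, which together with the edge $u\sim v$ still violates $(k{+}1)$-geodecity.
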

	\begin{proof}
		Let $G$ be an $(r,z,k+1)$-cage. Suppose that there exists a vertex $u$ of $G$ with even undirected degree $d(u)$.  Write $U(u) = \{ u_1,u_2, \dots , u_{2r-1},u_{2r}\} $.  Define the graph $G'$ as follows: delete $u$ from $G$, join $u_{2i-1}$ to $u_{2i}$ by an undirected edge for $1 \leq i \leq r$ and for every vertex $u^-$ in $Z^-(u)$ insert an arc from $u^-$ to some vertex $u^+$ in $Z^+(u)$. This construction is shown in Figure~\ref{Construction for mono cages}, with the new arcs and edges shown dashed. Call the added arcs and edges \emph{new elements}.
		
		Suppose that $G'$ is not $k$-geodetic; let $w$ and $w'$ be vertices of $G'$ with distinct mixed paths $P$ and $Q$ of length $\leq k$ from $w$ to $w'$. As each new element in $G'$ can be extended to a walk of length two in $G$ whilst preserving the non-backtracking property and $G$ is $(k+1)$-geodetic, we can assume that the mixed path $P$ contains at least two new elements. Examining a mixed path with length $\leq k-2$ between adjacent new elements in $P$, we see that there exists a non-backtracking closed walk of length $k$ through $u$ in $G$, which is impossible. Thus $G$ is at least $k$-geodetic and, having order smaller than the $(r,z,k+1)$-cage $G$, its geodetic girth must be exactly $k$.
		
		Thus we can assume that every vertex of $G$ has odd undirected degree. Let $u \sim v$ be an undirected edge of $G$. Let $U(u) = \{ v,u_1,u_2,\dots , u_{2r}\} $ and $U(v) = \{ u,v_1,v_2,\dots, v_{2s}\} $.  Form a new graph $G''$ by deleting $u$ and $v$ and matching up the remaining neighbours of $u$ and $v$ by new elements as in the previous construction, i.e. setting $u_{2i-1} \sim u_{2i}$ for $1 \leq i \leq r$, $v_{2j-1} \sim v_{2j}$ for $1 \leq j \leq s$ and inserting an arc from each vertex of $Z^-(u)$ to $Z^+(u)$ and an arc from each vertex of $Z^-(v)$ to $Z^+(v)$. Assuming $k \geq 2$, notice that the sets $U(u)-\{ v\} $, $U(v)-\{ u\} $, $Z^-(u)$, $Z^+(u)$, $Z^-(v)$ and $Z^+(v)$ are pairwise disjoint.   
		
		If $G''$ has geodetic girth $\leq k-1$, with two distinct mixed paths $P$ and $Q$ from a vertex $w$ to a vertex $w'$, then as before we can assume that $P$ contains two new elements. 
		
		Consider consecutive new elements in $P$. By the preceding argument these new elements cannot be associated with same vertex in $G$, for example a new edge between undirected neighbours of $u$ and an arc from $Z^-(u)$ to $Z^+(u)$ would yield a contradiction as above. By symmetry we can assume that the first element is associated with $u$ and the second with $v$; for example, these elements could be a new arc from $Z^-(u)$ to $Z^+(u)$ followed by a new edge in $U(v)$. Looking at the mixed subpath of $P$ between these consecutive new elements, we see that in $G$ there is either a mixed path of length $\leq k-2$ from $N^+(u)$ to $N^-(v)$; it follows that there are distinct mixed paths of length $\leq k$ from $u$ to $v$ in $G$, a contradiction, so $G''$ is $k$-geodetic.  
	\end{proof}
	
	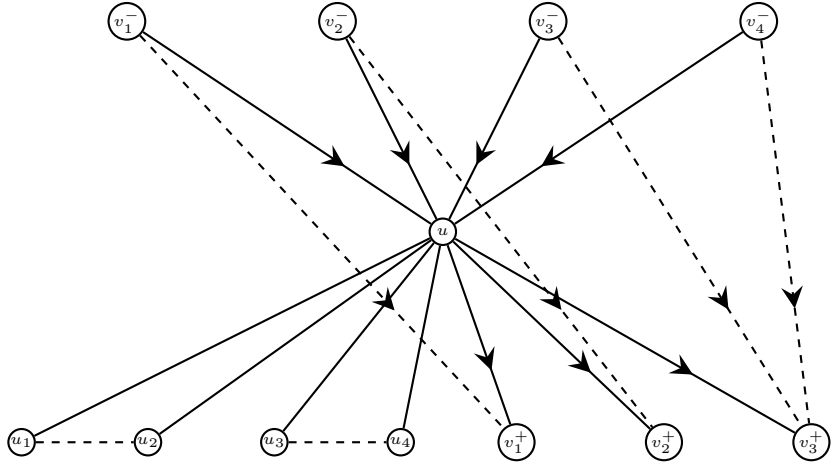
\begin{figure}\centering
		\begin{tikzpicture}[middlearrow=stealth,x=0.2mm,y=-0.2mm,inner sep=0.1mm,scale=1.4,
			thick,vertex/.style={circle,draw,minimum size=10,font=\tiny,fill=white},edge label/.style={fill=white}]
			\tiny
			\node at (200,0) [vertex] (v0) {$u$};
			\node at (0,100) [vertex] (v1) {$u_1$};
			\node at (60,100) [vertex] (v2) {$u_2$};
			\node at (120,100) [vertex] (v3) {$u_3$};
			\node at (180,100) [vertex] (v15) {$u_4$};
			\node at (235,100) [vertex] (v4) {$v_1^+$};
			\node at (305,100) [vertex] (v5) {$v_2^+$};
			\node at (375,100) [vertex] (v6) {$v_3^+$};
			\node at (50,-100) [vertex] (v11) {$v_1^-$};
			\node at (150,-100) [vertex] (v12) {$v_2^-$};
			\node at (250,-100) [vertex] (v13) {$v_3^-$};
			\node at (350,-100) [vertex] (v14) {$v_4^-$};

			\path
			(v0) edge (v1)
			(v0) edge (v2)
			(v0) edge (v3)
			(v0) edge (v15)
			(v0) edge  [middlearrow] (v4)
			(v0) edge  [middlearrow] (v5)
			(v0) edge  [middlearrow] (v6)
			(v11) edge  [middlearrow] (v0)
			(v12) edge  [middlearrow] (v0)
			(v13) edge  [middlearrow] (v0)
			(v14) edge  [middlearrow] (v0)	
			(v1) edge [dashed] (v2)
			(v3) edge [dashed] (v15)
			(v11) edge  [middlearrow, dashed] (v4)
			(v12) edge  [middlearrow, dashed] (v5)
			(v13) edge  [middlearrow, dashed] (v6)
			(v14) edge  [middlearrow, dashed] (v6)	
			;
		\end{tikzpicture}
		\caption{The construction in Theorem \ref{mixed monotonicity k}}
		\label{Construction for mono cages}
	\end{figure}

	Applying the procedure of Theorem \ref{mixed monotonicity k} to a smallest totally regular $(r,z,k;+\epsilon )$-graph (which we know to exist by Corollary \ref{smallest totally regular mixed graph}) by joining vertices of $Z^-(u)$ to $Z^+(u)$ by arcs in a one-to-one fashion, we see that strict monotonicity in the geodetic girth $k$ also holds for the order of smallest possible totally regular $(r,z,k;+\epsilon )$-graphs. Monotonicity in the directed out-degree is simple to demonstrate.
	
	\begin{theorem}
		$N(r,z,k) \leq N(r,z+1,k)$.  If $r = 0$, then strict inequality holds.
	\end{theorem}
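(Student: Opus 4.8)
The plan is to exploit the elementary fact that deleting arcs, or whole vertices, from a $k$-geodetic mixed graph can never create a new mixed path between any pair of vertices, so $k$-geodecity is inherited by such subgraphs. The degree conditions are then easy to track.

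For the inequality $N(r,z,k)\le N(r,z+1,k)$, I would start from an $(r,z+1,k)$-cage $G$: a $k$-geodetic mixed graph of order $N(r,z+1,k)$ with minimum undirected degree $r$ and minimum directed out-degree $z+1$. Since every vertex of $G$ has directed out-degree at least $z+1\ge 1$, we may delete exactly one out-arc incident from each vertex, obtaining a mixed graph $G'$ on the same vertex set. The undirected subgraph is untouched, so $G'$ still has minimum undirected degree $r$; each vertex has lost exactly one out-arc, so the minimum directed out-degree of $G'$ is at least $z$ (indeed equal to $z$, since a vertex realising the out-degree $z+1$ in $G$ now has out-degree $z$). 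As arc deletion introduces no new mixed path, $G'$ remains $k$-geodetic. Hence $G'$ is an $(r,z,k;+\epsilon)$-graph of order $N(r,z+1,k)$, which forces $N(r,z,k)\le N(r,z+1,k)$.

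For the strict inequality when $r=0$, the undirected degree condition is vacuous, which buys us the freedom to discard a whole vertex. Let $G$ be a $(0,z+1,k)$-cage of order $n=N(0,z+1,k)$; the directed Moore bound gives $n\ge 2$. Delete an arbitrary vertex $v$ together with its incident arcs to form $G-v$. A remaining vertex $w$ loses an out-arc only if $w\rightarrow v$, and then exactly one (no parallel arcs), so $G-v$ has minimum directed out-degree at least $z$ and is still $k$-geodetic. If the minimum out-degree exceeds $z$, repeatedly delete a single out-arc from a vertex of out-degree greater than $z$ until the minimum out-degree is exactly $z$; this terminates, preserves $k$-geodecity, and does not increase the order. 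The outcome is a $(0,z,k;+\epsilon)$-graph of order $n-1$, so $N(0,z,k)\le n-1<n=N(0,z+1,k)$.

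I do not expect a real obstacle in this argument; it is essentially bookkeeping. The only point worth stressing is why the vertex-deletion step that yields strictness for $r=0$ fails for $r\ge 1$: deleting a vertex $v$ would drop the undirected degree of each undirected neighbour of $v$ by one, violating the minimum undirected degree condition, and repairing this by matching up the orphaned neighbours with new edges (in the spirit of Theorem~\ref{mixed monotonicity k}) does not obviously reduce the order. This is precisely why strictness is claimed only in the purely directed case.
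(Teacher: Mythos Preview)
Your proof is correct and, for the non-strict inequality, identical to the paper's. For the strict inequality when $r=0$ the paper proceeds in the reverse order: it first deletes one out-arc per vertex, choosing the deletions so that some fixed vertex becomes a source (every in-neighbour of that vertex has its arc to it removed), and then deletes this source, which leaves every remaining out-degree untouched. Your variant---delete an arbitrary vertex first, observe each survivor loses at most one out-arc, then trim if necessary---is equally valid and arguably tidier, since it avoids the small observation that the arc deletions can be arranged to manufacture a source; the two arguments are otherwise the same idea executed in opposite orders.
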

	\begin{proof}
		Let $G$ be an $(r,z+1,k)$-cage.  Delete one arc from every vertex; the resulting graph has minimum undirected degree $r$, minimum directed out-degree $z$ and, as a subgraph of $G$, is obviously still $k$-geodetic. If $r = 0$, then the deleted arcs can be chosen such that the resulting subgraph $G'$ of $G$ contains a source vertex $z$, i.e. the in-degree of $z$ is zero; deleting $z$ does not decrease the geodetic girth or the minimum out-degree.  
	\end{proof}

	\section{Bounds on totally regular mixed graphs with small excess}\label{excesstotregular}
	
	The proof of the non-existence of mixed Moore graphs \cite{NguMilGim} uses an argument that admits of very useful generalisations.  We now present a counting argument that gives a new bound on the order of totally regular $(r,z,k;\epsilon )$-graphs.
	
	\begin{theorem}\label{bound for small excess}
		For $k \geq 3$, the excess $\epsilon $ of a totally regular $(r,z,k;+\epsilon )$-graph satisfies
		\[ \epsilon \geq \frac{r}{\phi }\Bigl[\frac{\lambda _1^{k-1}-1}{\lambda _1 - 1}-\frac{\lambda _2^{k-1}-1}{\lambda _2 - 1}\Bigr], \] 
		where
		\[ \phi = \sqrt{(r+z-1)^2+4z},  \]
		\[ \lambda _1 = \frac{1}{2}(r+z-1 + \phi) \]
		and
		\[ \lambda _2 = \frac{1}{2}(r+z-1 - \phi). \]
	\end{theorem}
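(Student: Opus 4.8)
The plan is to generalise the counting argument that Nguyen, Miller and Gimbert~\cite{NguMilGim} used to rule out mixed Moore graphs. Let $G$ be a totally regular $(r,z,k;+\epsilon)$-graph on $n$ vertices with adjacency matrix $A=A^U+A^Z$, where $A^U$ (symmetric) and $A^Z$ are the adjacency matrices of $G^U$ and $G^Z$. For $t\ge 0$ let $B_t$ be the $n\times n$ matrix whose $(u,v)$-entry counts the mixed paths (non-backtracking walks) of length exactly $t$ from $u$ to $v$, so that $B_0=I$, $B_1=A$, $B_2=A^2-rI$. Counting along the mixed Moore tree shows every row and column sum of $B_t$ equals $c_t$, where $c_0=1$, $c_1=r+z$ and $c_{t+1}=(r+z-1)c_t+zc_{t-1}$; the characteristic roots of this recurrence are exactly the $\lambda_1,\lambda_2$ of the statement and $\phi=\lambda_1-\lambda_2$. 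Since $G$ is $k$-geodetic there is at most one mixed path of length $\le k$ between any ordered pair, hence $\sum_{t=0}^{k}B_t=J-O$, where $O$ is the $0$--$1$ outlier matrix. Total regularity (applied also to the converse graph) gives $O\mathbf{1}=O^{\top}\mathbf{1}=\epsilon\mathbf{1}$, and $O$ has zero diagonal; more precisely $(B_t)_{uu}=0$ for $1\le t\le k$, since a non-backtracking closed walk of length $\le k$ would contradict $k$-geodecity.

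The structural heart of the argument is the matrix recurrence
\[ B_{t+1}=B_tA-(r-1)B_{t-1}-B_{t-2}A^Z\qquad(t\ge 2),\]
proved by a careful bookkeeping of which one-step extensions of a non-backtracking walk create a backtrack. The term $B_{t-2}A^Z$, which has no analogue in the purely directed or purely undirected cases, prevents $B_t$ from being a polynomial in $A$, and this is exactly what makes the mixed problem delicate. Summing this recurrence over $t=2,\dots,k$, substituting $\sum_{t=0}^{k}B_t=J-O$ to cancel the low-order matrices, and using $JA^U=rJ$, one is led after routine telescoping to an identity of the shape
\[ O(rI-A^U)=B_k\bigl(A^U-(r-1)I\bigr)-(r-1)B_{k-1}-(B_{k-1}+B_{k-2})A^Z.\]
Multiplying this on the right by $\mathbf{1}$ merely reproduces the recurrence for the $c_t$, so the bound must be extracted from the entrywise non-negativity of $O$ and of $B_k,B_{k-1},B_{k-2}$, together with their known row sums $c_k,c_{k-1},c_{k-2}$ and the vanishing of their short-distance entries.

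To finish, I would convert this matrix identity into a scalar inequality — for instance by pairing it with a non-negative quantity built from $O$ (so that $\mathrm{tr}(O)=0$, $O\mathbf{1}=\epsilon\mathbf{1}$ and the fact that $O^{\top}O$ has constant diagonal $\epsilon$ come into play), or, more combinatorially, by bounding over all vertices the number of arcs and edges that the sphere of radius $k$ about a vertex can direct back into the ball of radius $k$ — and then solve the linear recurrence explicitly. One checks that the stated lower bound equals
\[ \frac{r}{\phi}\Bigl[\frac{\lambda_1^{k-1}-1}{\lambda_1-1}-\frac{\lambda_2^{k-1}-1}{\lambda_2-1}\Bigr]=r\sum_{j=1}^{k-2}f_j,\qquad\text{with }f_j=\frac{\lambda_1^{j}-\lambda_2^{j}}{\phi},\]
where $f_0=0$, $f_1=1$, $f_2=r+z-1$ satisfy the same recurrence as the $c_t$; since $rf_j$ is precisely the number of ``edge-type'' vertices on level $j$ of the mixed Moore tree, the theorem says that every vertex has at least as many outliers as there are edge-type vertices in levels $1$ through $k-2$ of its Moore tree. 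I expect the final extraction step to be the main obstacle: the absence of a polynomial expression for $B_t$ rules out the clean eigenvalue argument available for Moore graphs, so instead one must control the multiplicities of length-$(k+1)$ non-backtracking walks that re-enter the ball of radius $k$, and this requires a careful use of $k$-geodecity; verifying the base cases of the recurrence and the telescoping of the boundary terms is routine by comparison.
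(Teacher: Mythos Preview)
Your matrix framework is set up correctly --- the recurrence $B_{t+1}=B_tA-(r-1)B_{t-1}-B_{t-2}A^Z$ and the identity $\sum_{t=0}^k B_t = J-O$ are both valid for totally regular $k$-geodetic mixed graphs --- but you yourself flag that the ``extraction step'' is missing, and neither of the two finishing ideas you sketch (pairing with a trace, or bounding back-arcs from the $k$-sphere) is carried out. Your closing combinatorial reading of the bound is an interpretation, not an argument; as it stands the proposal does not establish the inequality.

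The paper's proof bypasses the matrix algebra entirely and is much more direct. Fix a vertex $u$ and call $x$ an \emph{arrow vertex} if it lies in an undirected branch $T(u_i)$ of the Moore tree rooted at $u$, at depth $\le k-1$, and is reached there by an arc from level $t-1$. The key observation is a pigeonhole on in-neighbours: all $r$ undirected neighbours of $x$ together with one directed in-neighbour of $x$ already appear in $T(u_i)$, so only $z-1$ in-neighbours of $x$ are available for the rest of the tree; by $k$-geodecity each of the $z$ directed branches $T(u_j)$, $u_j\in Z^+(u)$, can contain at most one of them. Hence at least one $u_j\in Z^+(u)$ has $d(u_j,x)>k$, i.e.\ every arrow vertex lies in $\bigcup_{u_j\in Z^+(u)}O(u_j)$. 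Since the arrow vertices are pairwise distinct (again $k$-geodecity) and this union has at most $z\epsilon$ elements, one gets $z\epsilon\ge(\text{number of arrow vertices})$. Counting arrow vertices level by level via $U_{t+1}=(r-1)U_t+rZ_t$, $Z_{t+1}=z(U_t+Z_t)$ produces exactly the second-order recurrence with roots $\lambda_1,\lambda_2$; summing $Z_t$ from $t=2$ to $k-1$ gives $\frac{rz}{\phi}\bigl[\frac{\lambda_1^{k-1}-1}{\lambda_1-1}-\frac{\lambda_2^{k-1}-1}{\lambda_2-1}\bigr]$, and dividing by $z$ yields the bound.

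So the missing idea is precisely this in-neighbour pigeonhole for arrow vertices, together with the realisation that one should count outliers not of $u$ but of the vertices in $Z^+(u)$. Your matrix identity encodes this information, but extracting it would amount to rediscovering the combinatorial argument; the algebraic machinery is not a shortcut here.
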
  
	\begin{proof}
		Let the vertex $x$ be the end-point of an arc in an undirected branch $T(u_i)$ of the Moore tree of depth $k$ based at a vertex $u$ such that $d(u,x) \leq k-1$. We will call such a vertex an \emph{arrow vertex} (with respect to $u$).  All undirected neighbours of the arrow vertex $x$ occur in $T(u_i)$, together with a single vertex of $Z^-(x)$.  As $G$ is totally regular, there are $z-1$ vertices of $U(x) \cup Z^-(x)$ that do not occur in $T(u_i)$.  Suppose that every vertex of $Z(u)$ can reach $x$ by a mixed path of length $\leq k$.  As $x$ cannot occur in the directed branches of $u$, it would then follow that each of the $z$ directed branches must contain a vertex in $U(x) \cup Z^- (x)$; however, as $r+1$ vertices of $U(x) \cup Z^-(x)$ already occur in $T(u_i)$, this means that a vertex is repeated in the Moore tree, which contradicts $k$-geodecity.  Therefore $x \in \cup_{u_i \in Z(u_i)}O(u_i)$.

		We now count the number of such arrow vertices $x$.  For $1 \leq t \leq k-1$, let $Z_t$ be the number of vertices in the undirected branches at Level $t$ in the Moore tree based at $u$ that are end-points of arcs emanating from Level $t-1$ and let $U_t$ be the number of vertices in the undirected branches at Level $t$ that are connected by an edge to Level $t-1$.  Obviously $U_1 = r, Z_1 = 0 $ and $Z_1 = rz$.  These numbers satisfy the recurrence relations
		\[ U_{t+1} = (r-1)U_t+rZ_t, Z_{t+1} = zU_t + zZ_t \]
		for $t \geq 1$.  It follows that
		\[ Z_{t+2} = zU_{t+1}+zZ_{t+1} = z((r-1)U_t+rZ_t)+zZ_{t+1}.\]
		Substituting using the second relation,
		\[ Z_{t+2} = zZ_{t+1}+rzZ_t+z(r-1)(1/z)(Z_{t+1}-zZ_t) = (r+z-1)Z_{t+1}+zZ_t.\]
		This second-order recurrence relation has characteristic equation 
		\[ \lambda ^2 - (r+z-1)\lambda - z = 0,\]
		with solutions $\lambda _1 , \lambda _2$ as given in the statement of the theorem.  Observe that the discriminant $\phi ^2 = (r+z-1)^2+4z$ is strictly positive, so $\lambda _1, \lambda _2$ are real and distinct.  It follows that
		\[ Z_t = A\lambda _1^t + B\lambda _2^t \]
		for $t \geq 1$ and some constants $A$ and $B$.  Substituting $Z_1 = 0, Z_2 = rz$, we obtain
		\[ Z_t = \frac{rz}{\phi }(\lambda _1^{t-1}-\lambda _2^{t-1})\]
		for $t \geq 1$.  Summing, we find that there are 
		\[ \sum_{i = 0}^{k-2}\frac{rz}{\phi }(\lambda _1^{i}-\lambda _2^{i}) = \frac{rz}{\phi }\left[\frac{\lambda _1^{k-1}-1}{\lambda _1 - 1}-\frac{\lambda _2^{k-1}-1}{\lambda _2 - 1}\right] \]
		such vertices.  As the union of the outlier sets of the vertices in $Z(u)$ contain a maximum of $z\epsilon $ distinct vertices between them, it follows that 
		\[ z\epsilon \geq \frac{rz}{\phi }\left[\frac{\lambda _1^{k-1}-1}{\lambda _1 - 1}-\frac{\lambda _2^{k-1}-1}{\lambda _2 - 1}\right] \]
		and the result follows.  
	\end{proof}
	Some values of the lower bound in Theorem \ref{bound for small excess} for $k = 4$ are displayed in Table \ref{fig:totregexcesstable}. We are not aware of any instance in which the bound of Theorem \ref{bound for small excess} is tight.  However, as we shall now demonstrate, it does yield a powerful result on mixed graphs with excess one.

	\begin{table}
		\begin{small}
			\begin{center}
				\begin{tabular}{| c||c|c| c| c| c |c |c| c| c| c| c }
					\hline
					
					$r$/$z$ & 1 & 2 & 3 & 4 & 5 & 6   \\
					\hline \hline
					1 & 2 & 3 & 4 & 5 & 6  & 7 \\
					2 & 6 & 8 & 10 & 12 & 14  & 16 \\
					3 & 12 & 15 & 18 & 21 & 24  & 27 \\
					4 & 20 & 24 & 28 & 32 & 36 & 40  \\ 
					5 & 30 & 35 & 40 & 45 & 50 & 55   \\
					6 & 42 & 48 & 54 & 60 & 66 & 72  \\ 
					7 & 56 & 63 & 70 & 77 & 84 & 91  \\ 
					8 & 72 & 80 & 88 & 96 & 104 & 112  \\ 
					9 & 90 & 99 & 108 & 117 & 126 & 135  \\
					10 & 110 & 120 & 130 & 140 & 150 & 160 \\  
					11 & 132 & 143 & 154 & 165 & 176 & 187   \\
					12 & 156 & 168 & 180 & 192 & 204 & 216  \\ 
					13 & 182 & 195 & 208 & 221 & 234 & 247  \\
					14 & 210 & 224 & 238 & 252 & 266 & 280  \\   
					15 & 240 & 255 & 270 & 285 & 300 & 315   \\
					\hline
					
				\end{tabular}
			\end{center}
		\end{small}
		\caption{Lower bound on the excess from Theorem \ref{bound for small excess} for $k = 4$}
		\label{fig:totregexcesstable}
	\end{table}

	\begin{corollary}\label{excess one large k implies k = 3}
		If $G$ is a totally regular $(r,z,k;+1)$-graph with $k \geq 3$, then $r = 1$ and $k = 3$.
	\end{corollary}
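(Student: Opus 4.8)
The plan is to feed $\epsilon=1$ into Theorem~\ref{bound for small excess} and extract the two conclusions one after the other. Substituting $\epsilon = 1$ into the inequality of that theorem and rewriting the bracketed quantity as a difference of geometric series (exactly as it occurs inside the proof of Theorem~\ref{bound for small excess}), we obtain
\[ 1 \;\geq\; \frac{r}{\phi}\Bigl[\frac{\lambda_1^{k-1}-1}{\lambda_1-1}-\frac{\lambda_2^{k-1}-1}{\lambda_2-1}\Bigr] \;=\; \frac{r}{\phi}\sum_{i=0}^{k-2}(\lambda_1^{i}-\lambda_2^{i}). \]
(There is no division issue: the characteristic equation $\lambda^2-(r+z-1)\lambda-z=0$ has $\lambda=1$ as a root only when $r+2z=2$, which is impossible for $r,z\ge 1$.) The point of the rewriting is that each summand here is non-negative and can therefore be discarded when one only wants a lower bound.

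First I would record the sign information about $\lambda_1,\lambda_2$. From $\lambda_1\lambda_2 = -z < 0$ we get $\lambda_2 < 0 < \lambda_1$, and from $\lambda_1+\lambda_2 = r+z-1 > 0$ we get $\lambda_1 > |\lambda_2|$; hence $\lambda_1^{i} \ge |\lambda_2|^{i} \ge |\lambda_2^{i}|$, so $\lambda_1^{i}-\lambda_2^{i}\ge 0$ for every $i\ge 0$. The $i=1$ term is $\lambda_1-\lambda_2=\phi$. Consequently, for $k\ge 3$ the sum is at least $\phi$, and the displayed inequality collapses to $1 \ge r$. Since $r\ge 1$ for a mixed graph, this forces $r=1$, which is the first half of the statement.

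It then remains to rule out $k\ge 4$ under $r=1$. With $r=1$ one has $\lambda_1+\lambda_2=z$ and $\lambda_1-\lambda_2=\phi$, so the $i=2$ term is $\lambda_1^{2}-\lambda_2^{2}=(\lambda_1-\lambda_2)(\lambda_1+\lambda_2)=\phi z$. If $k\ge 4$ the sum $\sum_{i=0}^{k-2}(\lambda_1^{i}-\lambda_2^{i})$ contains both the $i=1$ and $i=2$ terms, all others being non-negative, hence it is at least $\phi+\phi z=\phi(1+z)$; the displayed inequality then gives $1 \ge 1+z$, contradicting $z\ge 1$. Therefore $k=3$.

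The whole argument is essentially immediate once Theorem~\ref{bound for small excess} is available; the only delicate points are the sign analysis of the roots $\lambda_1,\lambda_2$ (needed to justify throwing away the higher-order terms of the sum) and keeping in mind the standing hypotheses $r,z\ge 1$ that make the conclusions meaningful. I do not expect any genuine obstacle here, and in particular the case $r=1,k=3$ shows the estimate is exactly tight, so no further refinement is possible by this method.
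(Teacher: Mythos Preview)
Your proof is correct and takes essentially the same approach as the paper, which leaves the corollary unproved as an immediate consequence of Theorem~\ref{bound for small excess}. Your rewriting of the bracketed expression as $\sum_{i=0}^{k-2}(\lambda_1^i-\lambda_2^i)$, together with the sign analysis showing each term is non-negative, is exactly the right way to make the deduction explicit; the $i=1$ term forces $r=1$ and the $i=2$ term (equalling $\phi z$ once $r=1$) rules out $k\geq 4$.
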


	\begin{theorem}\label{r = 1, epsilon = 1}
		There are no totally regular $(r,z,k;+1)$-graphs for $k \geq 3$.
	\end{theorem}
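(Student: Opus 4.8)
The plan is to start from Corollary \ref{excess one large k implies k = 3}, which already forces $r=1$ and $k=3$, so it suffices to show no totally regular $(1,z,3;+1)$-graph exists; suppose $G$ is such a graph. Its undirected part is $1$-regular, so $G$ carries a perfect matching and I write $\bar u$ for the partner of $u$; I write $N^+(u),N^-(u)$ for the out- and in-neighbourhoods (each of size $z$) and $o(u)$ for the unique outlier of $u$. The first step is to extract local structure by re-running the arrow-vertex argument from the proof of Theorem \ref{bound for small excess} in the case $r=1$, $k=3$: the only undirected branch of the Moore tree rooted at $u$ is $T(\bar u)$, whose arrow vertices are exactly the $z$ out-neighbours of $\bar u$, and that counting shows each arrow vertex is the outlier of some out-neighbour of $u$. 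Since there are $z$ arrow vertices, $z$ out-neighbours, and each out-neighbour has a single outlier, the assignment is a bijection; thus $o$ restricts to a bijection $N^+(u)\to N^+(\bar u)$ for every $u$. Equivalently, every arc $v\to w$ gives an arc $\bar v\to o(w)$, and letting $v$ range over $N^-(w)$ yields the key identity $N^-(o(w))=\overline{N^-(w)}$ for all $w$.

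Next I would establish rigidity of $o$. A common in-neighbour $t$ of $w$ and $\bar w$ would give the distinct mixed paths $t\to w$ and $t\to\bar w\sim w$ of length $\le 2$, so $N^-(w)\cap N^-(\bar w)=\emptyset$; hence $o(w)\ne o(\bar w)$ (equality would force $N^-(w)=N^-(\bar w)$ through the key identity), and then $d(\bar w,o(w))=3$ because $o(w)\ne o(\bar w)$ puts $o(w)$ inside the Moore tree of $\bar w$ while $d(\bar w,o(w))\ge d(w,o(w))-1\ge 3$. I would then argue that $o$ is injective, hence a bijection: if $o(a)=o(b)$ with $a\ne b$ then $N^-(a)=N^-(b)$, and a case analysis of the mixed paths of length $\le 3$ between $a$ and $b$ — repeatedly using that $G$ has no non-backtracking closed walk of length $\le 3$ and no two distinct short mixed paths between an ordered pair — eliminates this, and also yields injectivity of $w\mapsto N^-(w)$. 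Combined with the identity $N^-(o^2(w))=N^-(w)$, this gives $o^2=\mathrm{id}$, so the outlier map is a fixed-point-free involution.

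Finally, since $w=o(o(w))$ is the outlier of $o(w)$ we get $d(o(w),w)\ge 4$, so $o$ is simultaneously the in-outlier map; applying all of the above to the arc-reversal of $G$ (again totally regular, $3$-geodetic, excess one) produces the dual identity $N^+(o(w))=\overline{N^+(w)}$. Writing $N^-(w)=\{v_1,\dots,v_z\}$, so that $N^-(o(w))=\{\bar v_1,\dots,\bar v_z\}$, an arc $o(w)\to v_i$ would give the non-backtracking closed walk $o(w)\to v_i\sim\bar v_i\to o(w)$ of length $3$; forbidding this, together with its analogues for $\overline{o(w)}$ and for out-neighbourhoods, forces a configuration exhibiting two distinct mixed paths of length $\le 3$ between some ordered pair of vertices, contradicting $3$-geodecity.

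I expect the two rigidity steps to be the main obstacle: ruling out distinct vertices with equal in-neighbourhoods, and pinning down the placement of $o(w)$ in the Moore tree of $\bar w$ finely enough to produce the final double path. The bookkeeping on how the perfect matching interacts with the outlier involution is delicate, and the degenerate positions — for instance $o(w)\in N^-(w)$, or $\overline{o(w)}$ being reached by an edge rather than an arc in $\bar w$'s Moore tree — each have to be excluded separately by exhibiting a short non-backtracking closed walk.
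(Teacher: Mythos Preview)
Your route diverges from the paper's in an interesting way: you avoid the matrix identity $I+A+A^2+A^3=I+J+A+A_Z-P$ entirely and instead derive the permutation property of $o$, the identity $N^-(o(w))=\overline{N^-(w)}$, and then the involution $o^2=\mathrm{id}$ purely combinatorially. These steps are sound. In particular, injectivity of $o$ follows cleanly from the bijection $o\!\restriction_{N^+(v)}:N^+(v)\to N^+(\bar v)$, and the injectivity of $w\mapsto N^-(w)$ really does come out of a finite case analysis: if $N^-(a)=N^-(b)$ with $a\neq b$, one checks that every putative $\le 3$-path from $a$ to $b$ (through $\bar a$, through $\bar b$, or through a common in-neighbour) produces a forbidden short closed walk or double path, so $b=o(a)$; but then $N^-(a)=\overline{N^-(a)}$, which immediately yields two short paths $v\to a$ and $v\sim\bar v\to a$. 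The dual identity via arc-reversal is then legitimate.

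The genuine gap is the final step. The constraints you list---$o(w)\not\to v_i$ and its analogues---are correct, but they do not by themselves ``force a configuration exhibiting two distinct mixed paths''. For $z=1$ your identities reduce to $o=f\circ\bar{\phantom{x}}\circ f^{-1}$ (with $f$ the out-neighbour map) plus a handful of local exclusions, and there is no visible obstruction at that level of abstraction; one still has to do positional work in a Moore tree. The paper does exactly this: fixing $v\in Z^-(u)$ and looking at the Moore tree rooted at $v$, it shows that every $w\in Z^+(o(u))$ must be the outlier of $\bar u$ specifically (any other choice places $w$ twice in the tree of $v$), which forces $z=1$, and then a direct vertex-by-vertex analysis of the depth-$3$ tree for $r=z=1$ finishes. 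Your involution and dual identities would in fact make that endgame shorter---once $o(\bar u)=\overline{u^+}$ for all $u$, applying $o$ twice yields $(u^+)^+=u$, a digon---but you still need the Moore-tree argument (or a replacement for it) to get there, and that argument is absent from your plan.
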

	\begin{proof}
		Let $G$ be a totally regular $(1,z,3;+1)$-graph.  For any vertex $u \in V(G)$ write $u^*$ for the undirected neighbour of $u$.  Let the adjacency matrices of $G, G^U$ and $G^Z$ be $A, A_U$ and $A_Z$ respectively.  Fix a vertex $u$ and draw the Moore tree rooted at $u$.  Examination of the Moore tree shows that there are two walks of length $\leq 3$ from $u$ to itself (the trivial walk $u$ and the walk $u \sim u^* \sim u$ of length two), two walks of length $\leq 3$ from $u$ to $u^*$ ($u \sim u^*$ and $u \sim u^* \sim u \sim u^*$), three walks of length $\leq 3$ from $u$ to any directed out-neighbour $v$ of $u$ ($u \rightarrow v, u \sim u^* \sim u \rightarrow v$ and $u \rightarrow v \sim v^* \sim v$) and unique walks of length $\leq 3$ from $u$ to the vertices at distance two and three from $u$.  It follows that
		\[ I+A+A^2+A^3 = I+J+A+A_Z-P,\]
		where $I$ is the $n \times n$ identity matrix, $J$ is the all-one matrix and $P_{vv'} = 1$ if $o(v)=v'$ and $0$ otherwise. As $G$ is totally regular, $J$ commutes with the left-hand side, $I$ and $A_Z$; therefore $JP = PJ$ and $o$ is a permutation.
		
		Take an edge $uu^*$.  The argument of Theorem \ref{bound for small excess} and the fact that $o$ is a permutation shows that $o(Z^+(u)) = Z^+(u^*)$ and $o(Z^+(u^*)) = Z^+(u)$.  Applying this result to an arbitrary directed in-neighbour $v$ of $u$, we see that there is a path $v \sim v^* \rightarrow o(u)$.  Let $w \in Z^+(o(u))$. A diagram of this situation is shown in Figure \ref{fig:permutation argument}.  There is a path of length three from $v$ to $w$, so $d(u,w) \geq 3$; in fact, since $o$ is a permutation, we have equality.  Since only $r+z-1$ in-neighbours of $w$ lie in the Moore tree rooted at $u$, it follows that $w$ must be the outlier of an out-neighbour of $u$.  Examining the Moore tree of depth three based at $v$, we see that if $w$ is an outlier of a vertex in $Z^+(u)$, then it would appear twice in the Moore tree rooted at $v$, once in the undirected $v^*$-branch and once in the $u$-branch in $Z^+(u^*)$, violating $3$-geodecity.  Therefore $v$ is the outlier of $u^*$; as the excess is one, $u^*$ has a unique outlier, so $z = 1$.  
		
		\begin{figure}\centering
			
			\begin{tikzpicture}[middlearrow=stealth,x=0.2mm,y=-0.2mm,inner sep=0.1mm,scale=2.90,
				thick,vertex/.style={circle,draw,minimum size=12,font=\tiny,fill=white},edge label/.style={fill=white}]
				\tiny
				\node at (50,100) [vertex] (w) {$w$};
				\node at (60,50) [vertex] (ou) {$o(u)$};
				\node at (70, 0) [vertex] (v*) {$v^*$};
				\node at (125,-50) [vertex] (v) {$v$};
				\node at (200,0) [vertex] (v0) {$u$};
				\node at (100,50) [vertex] (v1) {$u^*$};
				\node at (200,50) [vertex] (v2) {};
				\node at (300,50) [vertex] (v3) {};
				\node at (120,100) [vertex] (v4) {};
				\node at (180,100) [vertex] (v5) {};
				\node at (220,100) [vertex] (v6) {};
				\node at (200,100) [vertex] (v7) {};
				\node at (80,100) [vertex] (v9) {};
				\node at (280,100) [vertex] (v10) {};
				\node at (300,100) [vertex] (v11) {};
				\node at (320,100) [vertex] (v12) {};

				\node at (70,150) [vertex] (v13) {};
				\node at (80,150) [vertex] (v14) {};
				\node at (90,150) [vertex] (v15) {};
				
				\node at (110,150) [vertex] (v16) {};
				\node at (120,150) [vertex] (v17) {};
				\node at (130,150) [vertex] (v18) {};

				\node at (175,150) [vertex] (v19) {};
				\node at (185,150) [vertex] (v20) {};
				\node at (195,150) [vertex] (v21) {};
				\node at (200,150) [vertex] (v22) {};
				\node at (205,150) [vertex] (v23) {};
				\node at (215,150) [vertex] (v24) {};
				\node at (220,150) [vertex] (v25) {};
				\node at (225,150) [vertex] (v26) {};
				
				\node at (275,150) [vertex] (v27) {};
				\node at (285,150) [vertex] (v28) {};
				\node at (295,150) [vertex] (v29) {};
				\node at (300,150) [vertex] (v30) {};
				\node at (305,150) [vertex] (v31) {};
				\node at (315,150) [vertex] (v32) {};
				\node at (320,150) [vertex] (v33) {};
				\node at (325,150) [vertex] (v34) {};
				
				\path
				(ou) edge [middlearrow] (w)
				(v*) edge [middlearrow] (ou)
				(v) edge (v*)
				(v) edge [middlearrow] (v0)
				(v5) edge [middlearrow] (v19)
				(v5) edge [middlearrow] (v20)
				(v7) edge (v21)
				(v7) edge [middlearrow] (v22)
				(v7) edge [middlearrow]	(v23)
				(v6) edge (v24)
				(v6) edge [middlearrow] (v25)
				(v6) edge [middlearrow]	(v26)

				(v10) edge [middlearrow] (v27)
				(v10) edge [middlearrow] (v28)
				(v11) edge (v29)
				(v11) edge [middlearrow] (v30)
				(v11) edge [middlearrow]	(v31)
				(v12) edge (v32)
				(v12) edge [middlearrow] (v33)
				(v12) edge [middlearrow]	(v34)

				(v9) edge (v13)
				(v9) edge [middlearrow] (v14)
				(v9) edge [middlearrow] (v15)
				(v4) edge (v16)
				(v4) edge [middlearrow] (v17)
				(v4) edge [middlearrow] (v18)

				(v0) edge (v1)
				(v0) edge [middlearrow] (v2)
				(v0) edge [middlearrow] (v3)
				(v1) edge [middlearrow] (v9)
				(v1) edge [middlearrow] (v4)
				(v2) edge (v5)
				(v2) edge [middlearrow] (v6)
				(v2) edge [middlearrow] (v7)
				(v3) edge (v10)
				(v3) edge [middlearrow] (v11)
				(v3) edge [middlearrow] (v12)
				;

			\end{tikzpicture}
			\caption{Configuration for Theorem \ref{r = 1, epsilon = 1} for $z = 2$}
			\label{fig:permutation argument}
		\end{figure}
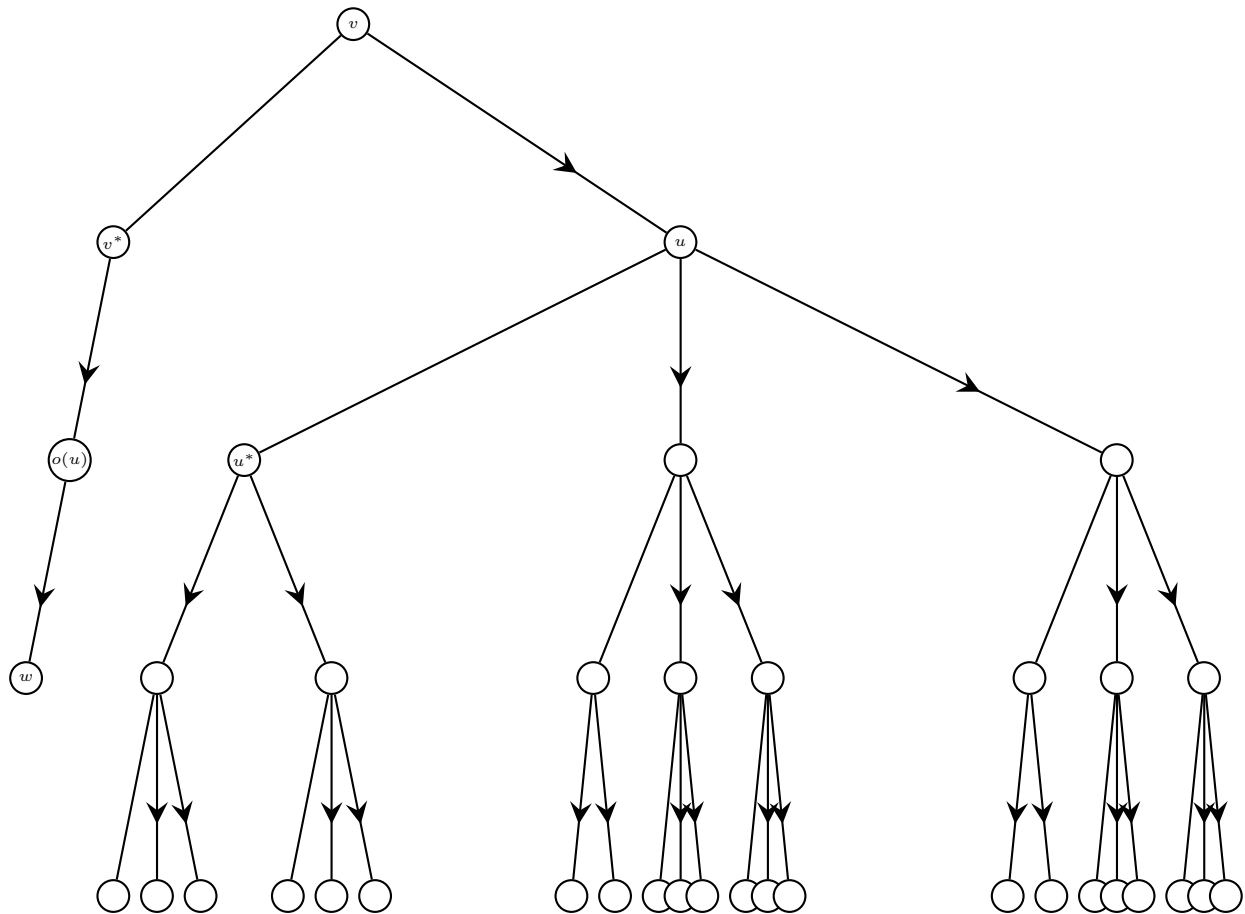
		
		We can dispose of the case $r = z = 1, k = 3$ by the argument of Theorem \ref{bound for small excess}.  Let $u_8 \sim a, u_8 \rightarrow b, u_9 \rightarrow c, u_{10} \sim d, u_{10} \rightarrow e$; see Figure \ref{fig:k3r1z1}.  Our argument shows that $o(u_2) = u_3$, so \[ \{ a,b,c,d,e\} = \{ u,u_1,u_6,u_7,u_{11}\} ,\]
		where $u_{11} = o(u)$. As the undirected neighbours of $u, u_1$ and $u_6$ are accounted for, $\{ b,c,e\} = \{ u,u_1,u_6\} $ and $\{ a,d\} = \{ u_7,u_{11}\} $. We have $\{ c,e\} \not = \{ u,u_1\} $ or there would be a repeat in the Moore tree rooted at $u_5$.  $u \not = b$ or else there would be paths $u_4 \sim u_2$ and $u_4 \rightarrow u_8 \rightarrow u \rightarrow u_2$, so $u \in \{ c,e\} $.  Thus $u_1 \not \in \{ c,e\} $, so $b = u_1$ and $\{ c,e \} = \{ u,u_6\} $.  By $3$-geodecity applied to $u_8$, $b = u_1$ implies that $a \not = u_7$, so $a = u_{11} = o(u)$ and hence $d = u_7$.  $e \not = u_6$, or $u_{10}$ would have two paths of length $\leq 3$ to $u_7$.  Therefore $c = u_6, e = u$.  
		
		Taking into account all adjacencies, it follows that there are three arcs from $\{ u_6,u_7,u_{11}\} $ to $\{ u_4,u_9,u_{11}\} $.  $u_{11} \not \rightarrow u_{11}$ and $u_{11} \not \rightarrow u_4$, or we would have $u_4 \rightarrow u_8 \sim u_{11} \rightarrow u_4$.  Hence $u_{11} \rightarrow u_9$.  $u_6 \not \rightarrow u_{11}$, or $u_9 \rightarrow u_6 \rightarrow u_{11} \rightarrow u_9$, so $u_6 \rightarrow u_4$ and $u_7 \rightarrow u_{11}$.  But now there are paths $u_1 \sim u \rightarrow u_2 \sim u_4$ and $u_1 \rightarrow u_3 \sim u_6 \rightarrow u_4$, contradicting $3$-geodecity.  As $G$ has even order, $G$ has excess $\epsilon \geq 3$.    
	\end{proof}
	
	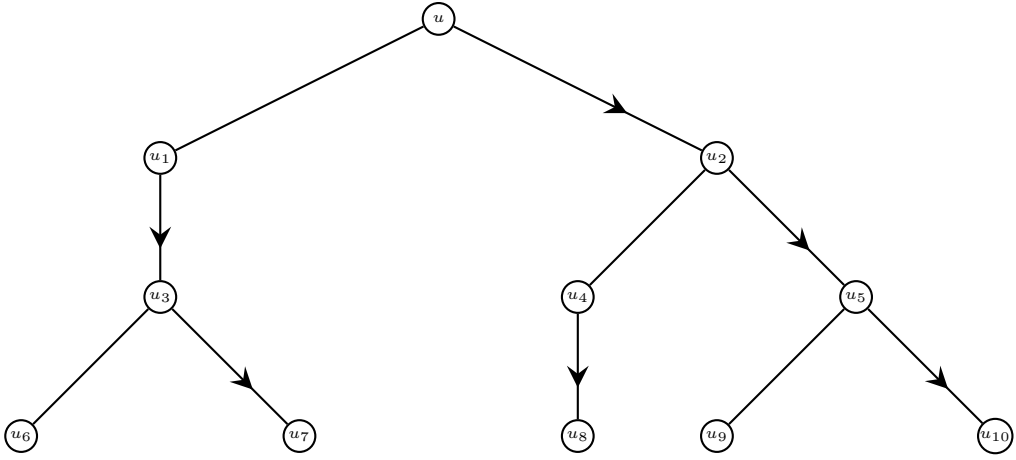
\begin{figure}\centering
		\begin{tikzpicture}[middlearrow=stealth,x=0.2mm,y=-0.2mm,inner sep=0.1mm,scale=1.85,
			thick,vertex/.style={circle,draw,minimum size=12,font=\tiny,fill=white},edge label/.style={fill=white}]
			
			\node at (0,0) [vertex] (v0) {$u$};
			\node at (-100,50) [vertex] (v1) {$u_1$};
			\node at (100,50) [vertex] (v2) {$u_2$};
			\node at (-100,100) [vertex] (v3) {$u_3$};
			\node at (50,100) [vertex] (v4) {$u_4$};
			\node at (150,100) [vertex] (v5) {$u_5$};
			\node at (-150,150) [vertex] (v6) {$u_6$};
			\node at (-50,150) [vertex] (v7) {$u_7$};
			\node at (50,150) [vertex] (v8) {$u_8$};
			\node at (100,150) [vertex] (v9) {$u_9$};
			\node at (200,150) [vertex] (v10) {$u_{10}$};
			
			\path
			(v0) edge (v1)
			(v0) edge [middlearrow] (v2)
			(v1) edge [middlearrow] (v3)
			(v2) edge (v4)
			(v2) edge [middlearrow] (v5)
			(v3) edge (v6)
			(v3) edge [middlearrow] (v7)
			(v4) edge [middlearrow] (v8)
			(v5) edge (v9)
			(v5) edge [middlearrow] (v10)
			
			;

		\end{tikzpicture}
		\caption{Moore tree for a $3$-geodetic mixed graph with $r = z = 1$}
		\label{fig:k3r1z1}
	\end{figure}
	It follows from Corollary \ref{excess one large k implies k = 3}, Theorem \ref{r = 1, epsilon = 1} and the results of \cite{TuiErs} that any $(r,z,k;+1)$-graph is either totally regular with $k = 2$, satisfying the conditions in Theorem \ref{spectral theorem}, or else $k \geq 3$, $z \geq 2$ and $G$ is not totally regular.
	
	We conclude this section with a result on the connection between outlier sets and automorphisms of mixed graphs with excess one. It is known that the outlier function of a $(d,k;+1)$-digraph $G$ is an automorphism if and only if $G$ is diregular \cite{Sil}.  The above results now allow us to extend this result to the more general mixed setting. 
	
	\begin{theorem}\label{automorphism}
		The outlier function of an $(r,z,k;+1)$-graph $G$ is an automorphism if and only if $G$ is totally regular.
	\end{theorem}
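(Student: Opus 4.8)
The plan is to reduce the claim to a short matrix computation using the classification of $(r,z,k;+1)$-graphs obtained above. Recall that every $(r,z,2;+1)$-graph is totally regular by \cite{TuiErs}, whereas Corollary \ref{excess one large k implies k = 3} and Theorem \ref{r = 1, epsilon = 1} show that no totally regular $(r,z,k;+1)$-graph exists for $k \geq 3$; hence an $(r,z,k;+1)$-graph is totally regular exactly when $k = 2$. It therefore suffices to prove (i) that $o$ is an automorphism when $k = 2$ and (ii) that $o$ is not an automorphism when $k \geq 3$, since together with the above these give the stated equivalence. Throughout I write $A = A_U + A_Z$ for the splitting of the adjacency matrix of $G$ into its undirected and directed parts, let $J$ be the all-ones matrix, and — whenever each vertex has a unique outlier — let $P$ be the $0$--$1$ matrix with $P_{uv} = 1$ iff $v = o(u)$. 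For $i \geq 0$ let $N_i$ count the non-backtracking walks of length exactly $i$ between pairs of vertices. If $G$ is out-regular then every Moore tree is the full $M(r,z,k)$-vertex tree and, by $k$-geodecity and excess one, omits precisely the outlier of its root, so that $\sum_{i=0}^{k} N_i + P = J$; for $r = 1$, $k = 3$ this is exactly the identity used in the proof of Theorem \ref{r = 1, epsilon = 1}, rewritten.

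For (i), total regularity gives $AJ = JA = (r+z)J$, so $J$ commutes with $A$ and hence with $P = J - \sum_{i=0}^{2} N_i = J - A^2 - A + (r-1)I$; comparing $(JP)_{uv} = |o^{-1}(v)|$ with $(PJ)_{uv} = 1$ forces $o$ to be a bijection, so $P$ is a permutation matrix. Multiplying $P = J - A^2 - A + (r-1)I$ by $A$ on the left and on the right, the two results differ only by $AJ - JA = 0$, so $AP = PA$, i.e. $A_{o(u),o(v)} = A_{u,v}$ for all $u,v$. Since a $2$-geodetic mixed graph contains no digon and no pair of vertices joined by both an edge and an arc, this adjacency-preserving bijection maps edges to edges and arcs to arcs, so $o \in \mathrm{Aut}(G)$.

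For (ii), suppose that $o$ is an automorphism. Then $o$ is a bijection of $V(G)$, so every vertex has an outlier; a vertex of undirected degree greater than $r$ or out-degree greater than $z$ would have a Moore tree meeting all $n$ vertices and hence no outlier, so $G$ is out-regular and $\sum_{i=0}^{k} N_i + P = J$. Now expand $\bigl(\sum_i N_i\bigr)A$: reading each summand as a non-backtracking walk of length $\leq k$ followed by one more step, such a walk is either a non-backtracking walk of length between $1$ and $k+1$ or else retraces its final edge at the joint; in the latter case, deleting the retraced pair leaves a non-backtracking walk of length $\leq k-1$, so the backtracking contributions form a sum of the matrices $N_j$ with $j \leq k-1$, weighted by $r$ or $r-1$ according to the type of that walk's final step. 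Carrying out the same expansion for $A\bigl(\sum_i N_i\bigr)$, now classified by the first step, and subtracting, the non-backtracking parts cancel and one is left with
\[ \Bigl(\sum_{i=0}^{k} N_i\Bigr)A - A\Bigl(\sum_{i=0}^{k} N_i\Bigr) = r\bigl(S^{Z} - {}^{Z}S\bigr) + (r-1)\bigl(S^{U} - {}^{U}S\bigr), \]
where $S^{Z}, S^{U}$, respectively ${}^{Z}S, {}^{U}S$, are sums of matrices $N_j$ with $j \leq k-1$ sorted by whether their last, respectively first, step is an arc or an edge. On the other hand $\sum_{i=0}^{k} N_i = J - P$ together with $PA = AP$ gives that the left-hand side equals $JA - AJ$. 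Reading off the $(u,u)$ entry, the right-hand side counts nontrivial closed non-backtracking walks through $u$ of length $\leq k-1$ (the contributions of the trivial walk cancelling between $S^{\bullet}$ and ${}^{\bullet}S$), of which there are none since a nontrivial closed non-backtracking walk of length $\leq k$ would contradict $k$-geodecity; while $(JA - AJ)_{uu} = (r + d^-(u)) - (r + z) = d^-(u) - z$. Hence $d^-(u) = z$ for every $u$, so $G$ is totally regular, contradicting Theorem \ref{r = 1, epsilon = 1}.

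The part I expect to require the most care is the combinatorial bookkeeping in (ii): one must verify that multiplying $\sum_i N_i$ by $A$ on either side produces exactly the stated correction — that the retrace at the joint peels off a non-backtracking walk of length at most $k-1$ and enters with the correct weight $r$ or $r-1$. Once this is checked the diagonal vanishes for free, and the remaining computation in (ii), as well as the whole of the easy direction (i), is immediate.
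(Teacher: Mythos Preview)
Your proof is correct. The forward direction (i) is essentially the paper's argument: for $k=2$ you use the identity $I+A+A^2=J+rI-P$ (equivalently $\sum_{i\le 2}N_i+P=J$), commute with $A$ using total regularity, and read off $PA=AP$; the extra sentence you add about why $o$ respects the edge/arc distinction is a welcome clarification that the paper leaves implicit.

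The reverse direction is where your route genuinely differs. The paper disposes of the non--totally-regular case by a three-line pigeonhole: pick $v'\in S'$; if $o$ were an automorphism then $o(v')\in S'$ too, so $o(v')$ has more than $r+z$ in-neighbours, all of which must sit at Level~$k$ of the Moore tree rooted at $v'$; two of them then land in the same branch, yielding two $\le k$-paths from an out-neighbour of $v'$ to $o(v')$. You instead run a matrix-commutator argument: from $PA=AP$ and $\sum N_i=J-P$ you get $(\sum N_i)A-A(\sum N_i)=JA-AJ$, while a direct expansion of the left side via the backtracking correction to $N_iA$ and $AN_i$ shows its diagonal counts closed non-backtracking walks of length $\le k-1$, hence vanishes; equating diagonals gives $d^-(u)=z$. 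Your bookkeeping is right --- for $i\ge 2$ one has $N_iA=N_{i+1}+rN_{i-1}^{(\to)}+(r-1)N_{i-1}^{(\sim)}$ and symmetrically on the left, while $N_1A=AN_1=N_2+rI$ cancels --- and the diagonal does vanish by $k$-geodecity. What the paper's argument buys is brevity and independence from the classification: it works uniformly for all $k\ge 2$ without invoking \cite{TuiErs} to first pin down that the non--totally-regular case forces $k\ge 3$. What your argument buys is a purely algebraic proof that $PA=AP$ forces diregularity, which is pleasant in its own right and parallels the matrix treatment of (i).
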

	\begin{proof}
		Suppose firstly that $G$ is not totally regular; recall that $G$ must be out-regular.  Let $v' \in S'$.  Suppose that $o$ is an automorphism. It follows that $o(v') \in S'$.  However, this implies that $o(v')$ has $> r+z$ in-neighbours distributed among the $r+z$ branches of the Moore tree based at $v'$, so that some out-neighbour of $v'$ has $\geq 2$  mixed paths to $o(v')$ with length $\leq k$. Thus if $o$ is an automorphism, then $G$ is totally regular.
		
		Now let $G$ be totally regular.  Let $k = 2$ and write $A$ for the adjacency matrix of $G$.  Then 
		\[ I + A + A^2 = J + rI - P,\]
		where $I$ is the $n \times n$ identity matrix, $J$ is the all-one matrix and $P_{uv} = 1$ if $o(u) = v$ and $0$ otherwise.  As $G$ is totally regular, both $I$ and $J$ commute with $A$.  Therefore $P$ commutes with $A$, so that $o$ is an automorphism.  There are no totally regular mixed graphs with excess one for $k \geq 3$ by Theorem \ref{r = 1, epsilon = 1}, so the proof is complete. 
	\end{proof}

	\section{Excess of mixed graphs that are not totally regular}\label{counting not totreg}
	
	We will now revisit the counting arguments used in the previous section to derive a bound in the more difficult context of mixed graphs that are not totally regular.  We will see that a bound for all mixed graphs, totally regular or not, can be achieved by relaxing the bound in Theorem \ref{bound for small excess} by a factor of $\frac{z}{2r+3z}$. We will need the following result from \cite{TuiErs}.  
	
	\begin{theorem}\cite{TuiErs}\label{TuiErs theorem}
		Any $(r,z,k;+1)$-graph must be totally regular if either $k = 2$ or $z = 1$.   
	\end{theorem}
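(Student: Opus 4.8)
The plan is to prove the statement in two stages: first establish that any $(r,z,k;+1)$-graph $G$ is \emph{out-regular} (this needs only $k\ge 2$), and then upgrade out-regularity to total regularity using the extra hypothesis. Write $n=M(r,z,k)+1$ for the order. For out-regularity, fix a vertex $u$ and form the mixed Moore tree $T_u$ of depth $k$ rooted at $u$ as in Section~\ref{Notation}. If some vertex occurred twice in $T_u$ at levels $\le k$, we would obtain two distinct mixed paths of length $\le k$ between two vertices, contradicting $k$-geodecity; hence all vertices of $T_u$ are distinct and $|V(T_u)|\le n$. On the other hand, if $d(u)>r$ or $d^+(u)>z$, then $u$ has an extra neighbour at Level $1$, and since $k\ge 2$ that vertex has at least one descendant at Level $2$, so it and its subtree contribute at least two vertices beyond a minimum Moore tree, giving $|V(T_u)|\ge M(r,z,k)+2>n$, a contradiction. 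Thus $d(u)=r,\ d^+(u)=z$ for all $u$, so $G$ is out-regular and $|V(T_u)|=M(r,z,k)=n-1$ for every $u$; consequently each vertex $u$ has a unique \emph{outlier} $o(u)$, and we let $P$ be the $0$–$1$ matrix with $P_{uv}=1$ iff $o(u)=v$, so that $P\mathbf 1=\mathbf 1$.

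\emph{Case $k=2$.} A digon would already violate $2$-geodecity, so the only backtracking among length-$2$ walks contributes $r$ to each diagonal entry of $A^2$, and counting non-backtracking mixed walks of length $\le 2$ yields the identity $I+A+A^2=J+rI-P$, where $A$ is the adjacency matrix of $G$ and $J$ is all-ones. Since the left-hand side is a polynomial in $A$, it commutes with $A$; writing $\mathbf c$ for the column-sum vector of $A$ (so $\mathbf c=r\mathbf 1+\mathbf d^{-}$, where $\mathbf d^{-}$ lists the in-degrees), and using $AJ=(r+z)J$ and $JA=\mathbf 1\mathbf c^{\top}$, one gets
\[ AP-PA=(r+z)J-JA=\mathbf 1\mathbf e^{\top},\qquad \mathbf e:=z\mathbf 1-\mathbf d^{-}, \]
with $\mathbf e^{\top}\mathbf 1=0$ because $G$ has exactly $nz$ arcs. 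The task is to force $\mathbf e=\mathbf 0$. The plan is to combine this with the rigidity of $P$ (a $0$–$1$ matrix with all row sums $1$, and $P=J-(I+A+A^2-rI)$ has each column prescribed by a polynomial in $A$): feeding the explicit form of $P$ back into $AP-PA=\mathbf 1\mathbf e^{\top}$, or equivalently comparing the identity with its transpose — which is the corresponding identity for the reverse mixed graph $\overleftarrow G$ — one deduces first that $P$ commutes with $J$, so $o$ is a permutation and $P$ is doubly stochastic, and then a last pass through the identity gives that the column sums of $A$ are constant, i.e. $d^{-}$ is constant. Hence $G$ is totally regular.

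\emph{Case $z=1$.} Out-regularity now says the arcs form the graph of a single map $\sigma\colon V\to V$ (the out-neighbour function), and $G$ is totally regular exactly when $\sigma$ is a bijection, equivalently when $\sigma$ omits no vertex from its image. Suppose a vertex $w$ has $d^{-}(w)=0$. For each $v$ let $T^{-}_v$ be the depth-$k$ in-Moore-tree rooted at $v$ (built from in-neighbours and undirected neighbours, non-backtracking on edges); by $k$-geodecity all its vertices are distinct, so $|V(T^{-}_v)|\le n$, while double counting against the out-balls gives the exact equality
\[ \sum_{v}|V(T^{-}_v)|=\sum_{v}\bigl|\{x:d(x,v)\le k\}\bigr|=\sum_{x}|V(T_x)|=n\,M(r,1,k)=n(n-1). \]
Because $d^{-}(w)=0$, the root of $T^{-}_w$ has only $r$ children instead of $r+1$, so $T^{-}_w$ omits an entire depth-$(k-1)$ arc-subtree; that deficit must be paid for by vertices of in-degree $\ge 2$. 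The plan is to locate such a branch vertex $b$ (with arcs $a\to b$ and $a'\to b$, $a\ne a'$) on a shortest path that is forced to reach $w$, and then follow the paths $a\to b$ and $a'\to b$ together with the undirected structure at $w$ to exhibit two distinct mixed paths of length $\le k$ between one pair of vertices, contradicting $k$-geodecity. Hence $\sigma$ is a bijection and $G$ is totally regular.

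The hard part in both cases is the passage from out-regularity to in-regularity: the excess-one slack is so tight that the counting must come out essentially exactly, so one needs pointwise rather than merely aggregate equalities. For $k=2$ this is what forces reliance on the algebraic identity — the purely combinatorial count controls only the sum $\sum_v|V(T^{-}_v)|$, not the individual terms — while for $z=1$ the difficulty is to isolate a configuration around a hypothetical source $w$ that genuinely violates $k$-geodecity. It is essential that we are in one of these two regimes, since for $k\ge 3$ and $z\ge 2$ these arguments do not exclude non-totally-regular $(r,z,k;+1)$-graphs.
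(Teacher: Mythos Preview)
This theorem is quoted from \cite{TuiErs} without proof in the present paper, so there is no in-paper argument to compare against; what follows is an assessment of your proposal on its own terms.

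Your out-regularity step is sound. The gaps are in the upgrade to total regularity. For $k=2$ you correctly obtain $AP-PA=\mathbf{1}\mathbf{e}^{\top}$, but then assert that ``comparing with the transpose'' shows $P$ commutes with $J$ and hence $\mathbf e=\mathbf 0$. This does not follow: the transpose of your matrix identity is literally the same identity transposed and yields nothing new, while the reverse graph $\overleftarrow G$ is not known to be out-regular, so you cannot invoke a symmetric excess-one identity for it. Concretely, from $P=J+(r-1)I-A-A^{2}$ the column sums of $P$ work out to
\[
p_v \;=\; 1+(1+r+z)\,e_v+\sum_{w\in N^{-}(v)}e_w,
\]
which constrains but does not by itself force $\mathbf e=\mathbf 0$. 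The published argument in \cite{TuiErs} requires substantial further combinatorial analysis of how vertices with abnormal in-degree can sit inside the Moore tree, in the spirit of Section~\ref{counting not totreg} here; a purely linear-algebraic finish along the lines you sketch does not appear to be available.

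For $z=1$ the situation is similar. Your double count $\sum_v|V(T^{-}_v)|=n(n-1)$ is correct but you never use it: the individual terms $|V(T^{-}_v)|$ depend on the (variable) in-degrees along the in-tree, so the sum gives no pointwise control. You then announce a ``plan'' to locate a branch vertex $b$ with two in-arcs and manufacture two $\le k$-paths with common endpoints, but no configuration is actually produced. Knowing only that a source $w$ exists and that therefore some vertex has in-degree at least $2$ does not, without further structure, pin down such a pair of paths. In both cases you have set up the right framework but stopped precisely where the substantive work begins.
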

	Using the new bound presented in the following theorem we will improve on Theorem \ref{TuiErs theorem} significantly.
	
	\begin{theorem}\label{bound for non totally regular}
		The excess of any $(r,z,k)$-cage satisfies 
		\[ \epsilon \geq \frac{rz}{(2r+3z)\phi }\Bigl[\frac{\lambda _1^{k-1}-1}{\lambda _1 - 1}-\frac{\lambda _2^{k-1}-1}{\lambda _2 - 1}\Bigr], \]
		where $\lambda _1, \lambda _2$ and $\phi $ are as defined in Theorem \ref{bound for small excess}.
	\end{theorem}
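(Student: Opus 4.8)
The plan is to re-run the arrow-vertex count from the proof of Theorem~\ref{bound for small excess}, this time over a \emph{reduced} Moore tree and with extra bookkeeping for vertices whose degrees exceed the minimum (throughout I extend the terms ``outlier'' and ``outlier set'' $O(\cdot)$ to graphs that need not be totally regular in the obvious way, $O(w)=\{v:d(w,v)\ge k+1\}$). Let $G$ be an $(r,z,k)$-cage, so $n=M(r,z,k)+\epsilon$. Fix a vertex $u$ and build a reduced Moore tree $T=T_u$ of depth $k$ rooted at $u$ by choosing, at every vertex, exactly $r$ of its (at least $r$) undirected neighbours and exactly $z$ of its out-neighbours. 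By $k$-geodecity the $M(r,z,k)$ vertices of $T$ are pairwise distinct, and the recurrences from that proof apply verbatim, so $T$ has exactly $\frac{rz}{\phi}\bigl[\frac{\lambda_1^{k-1}-1}{\lambda_1-1}-\frac{\lambda_2^{k-1}-1}{\lambda_2-1}\bigr]$ arrow vertices (endpoints of arcs inside undirected branches, at levels $\le k-1$), a number independent of $u$. Since every vertex of $G$ has undirected degree $\ge r$ and out-degree $\ge z$, a reduced Moore tree rooted at any vertex $w$ embeds $M(r,z,k)$ distinct vertices into $\{v:d(w,v)\le k\}$, so $|O(w)|\le\epsilon$ for every $w$. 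It therefore suffices to prove that $T_u$ has at most $(2r+3z)\epsilon$ arrow vertices; combining this with the exact count gives the inequality.

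First I would recover the dichotomy behind Theorem~\ref{bound for small excess} in this generality. If $x$ is an arrow vertex of $T_u$ in the undirected branch $T(u_i)$, then $r$ undirected neighbours and one in-neighbour of $x$ already occur in $T(u_i)$; so if every out-neighbour of $u$ rooting a directed branch of $T_u$ reached $x$ by a mixed path of length $\le k$, each of the $z$ directed branches would contain a further, pairwise distinct element of $U(x)\cup Z^-(x)$, forcing $|U(x)\cup Z^-(x)|\ge r+z+1$. Thus each arrow vertex $x$ is either (i) an outlier of one of the $z$ out-neighbours of $u$ rooting the directed branches of $T_u$ — these contribute at most $z\epsilon$ in total — or (ii) \emph{heavy}, meaning $d(x)+d^-(x)\ge r+z+1$. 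The task reduces to bounding the heavy arrow vertices by $(2r+2z)\epsilon$.

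A heavy arrow vertex must carry a neighbour absent from $T_u$: a surplus undirected neighbour (one of its $\ge r+1$ undirected neighbours not chosen for $T_u$) or a surplus in-neighbour. For a surplus undirected neighbour $x'$ one checks $x'\notin T_u$: otherwise the non-tree edge $x\sim x'$ together with the tree paths to $x$ and to $x'$ would give two distinct mixed walks of length $\le k$ from $u$ to $x'$, contradicting $k$-geodecity. Hence $x'\in\{v:d(u,v)\le k\}\setminus T_u$, a set of size $|\{v:d(u,v)\le k\}|-M(r,z,k)\le\epsilon$, and the map $x\mapsto x'$ is essentially injective, since the non-backtracking walk obtained by following the tree path from $u$ to $x$ and then the edge $x\sim x'$ has length $\le k$ and is therefore the unique such walk to $x'$, recovering $x$. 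Heavy arrow vertices of the remaining kind, having a surplus in-neighbour, I would handle by pushing the associated defect \emph{forward} along a mixed path witnessing reachability of $x$ and landing it on the first level of $T_u$, then bounding how many heavy arrow vertices can be charged to a single first-level vertex (again using $|O(\cdot)|\le\epsilon$ and $k$-geodecity), and separately disposing of the degenerate situation where a pair of vertices is joined by both an edge and an arc. Tracking the constants through this case analysis — $z\epsilon$ from case (i), and contributions of at most $(2r+2z)\epsilon$ from the undirected- and directed-surplus families in case (ii) — produces the bound $(2r+3z)\epsilon$ on the number of arrow vertices, whence the theorem.

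The step I expect to be the main obstacle is the family of heavy arrow vertices possessing a surplus in-neighbour: unlike a surplus undirected neighbour, such an in-neighbour need not lie within distance $k$ of $u$, so it cannot be parked inside $\{v:d(u,v)\le k\}\setminus T_u$, and the forward-pushing step together with the multiplicity bound on first-level vertices is precisely where the constant grows from $z$ to $2r+3z$. Everything else — the recurrence, the summation, and the case (i) count — is essentially as in Theorem~\ref{bound for small excess}.
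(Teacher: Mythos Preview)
Your dichotomy into case (i) (arrow vertex is an outlier of some directed out-neighbour of $u$, contributing $\le z\epsilon$) versus case (ii) (arrow vertex is ``heavy'', i.e.\ has $d(x)+d^-(x)\ge r+z+1$) is correct and matches the paper. The handling of case (ii-a), where the surplus is undirected, is fine and in fact gives a sharper bound than needed. The problem is case (ii-b), the directed-surplus family, which you yourself flag as the main obstacle: the ``forward-pushing'' description is not a proof sketch, and I do not see how to make a direct charging argument of that kind yield the constant $2r+2z$. A surplus in-neighbour of $x$ can lie anywhere in $G$ (even at distance $>k$ from $u$), so there is no obvious small receptacle analogous to $\{v:d(u,v)\le k\}\setminus T_u$, and ``landing on the first level'' does not come with a multiplicity bound you have justified.

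The paper's argument for this step is genuinely different and worth knowing. First it reduces to the out-regular case (delete surplus arcs; if any undirected degree exceeds $r$ the excess is already large), so only in-degrees can vary. Then it does \emph{not} try to bound the high-in-degree vertices $S'$ directly. Instead it looks at the \emph{deficient} side $S=\{v:d^-(v)<z\}$ and uses the global balance $\sigma:=\sum_{v\in S}(z-d^-(v))=\sum_{v'\in S'}(d^-(v')-z)\ge |S'|$. The key count is that every deficient vertex $v$ with deficiency $s$, wherever it sits relative to the Moore trees rooted at $u$ and at an undirected neighbour $u_1$ of $u$, is forced to be an outlier of at least $s$ vertices of $N^+(u)\cup N^+(u_1)$; summing gives $\sigma\le (2r+2z)\epsilon$. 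Combining with $|S'|\ge A(r,z,k)-z\epsilon$ (your case (i)) yields $(2r+3z)\epsilon\ge A(r,z,k)$. The switch from surplus to deficiency, together with the use of a second root $u_1$, is the missing idea in your proposal.
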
  
	\begin{proof}
		Let $G$ be an $(r,z,k)$-cage. We can assume that the directed subgraph of $G$ is out-regular, by deleting some arcs if necessary.  Let the number of arrow vertices in the Moore tree of an out-regular $(r,z,k;+\epsilon )$-graph be $A(r,z,k)$.  By the calculation of Theorem \ref{bound for small excess} we know that \[ A(r,z,k) = \frac{rz}{\phi }\Bigl[\frac{\lambda _1^{k-1}-1}{\lambda _1 - 1}-\frac{\lambda _2^{k-1}-1}{\lambda _2 - 1}\Bigr].\] 
		We are therefore aiming to prove that 
		\[ \epsilon \geq \frac{1}{2r+3z}A(r,z,k).\]
		Clearly if there exists a vertex $u$ of $G$ with undirected degree $d(u) \geq r+1$, then the excess of $G$ would exceed $A(r,z,k)$ and hence also the bound of the theorem.  We can thus assume that $G$ is out-regular.
		Let the deficiency $\sigma ^-(v)$ of a vertex $v \in S$ be $z - d^-(v)$ and the surplus $\sigma ^+(v')$ of a vertex $v' \in S'$ be $d^-(v')-z$.  As $G$ is out-regular we have for the total deficiency $\sigma $
		\[ \sigma = \sum _{v\in S}\sigma ^-(v) = \sum _{v' \in S'}\sigma ^+(v').\]
		As each vertex in $S'$ contributes at least one to $\sigma $, we trivially have $\sigma \geq |S'|$.  We will now find an upper bound for $\sigma $ in terms of $r, z$ and $\epsilon $.
		
		Fix a vertex $u$ of $G$ and draw the Moore tree of depth $k$ rooted at $u$.  Write $U(u) = \{ u_1,u_2, \dots, u_r\} $. Let $v \in S$ have deficiency $\sigma ^-(v) = s$.  Suppose firstly that $d(u,v) \geq k$ (i.e. either $v$ lies at the bottom of the tree or $v \in O(u)$). Then $v$ can have in-neighbours in at most $r+z-s$ branches of the Moore tree and so lies in the outlier sets of at least $s$ members of $N^+(u)$.  
		
		Now suppose that either $u = v$ or $d(u,v) \leq k-1$ and $v$ lies in an undirected branch of the tree.  At most $z - s$ directed branches of the tree can contain in-neighbours of $v$ (in fact $z-s-1$ branches if $v$ is an arrow vertex), so again $v$ occurs at least $s$ times in the multiset $O(Z^+(u))$.
		
		Lastly we must consider the case that $v$ lies in a directed branch of the tree and $d(u,v) \leq k-1$.  Consider the Moore tree based at any $u_i \in U(u)$, say $u_1$.  $v$ lies in an undirected branch of this tree and so by our previous analysis $v$ occurs at least $s$ times in $O(N^+(u_1))$.  
		
		We have now dealt with all members of $S$.  Summing their deficiencies to find $\sigma $ we find that the elements of $S$ appear at least $\sigma $ times in the multiset $O(N^+(u))\cup O(N^+(u_1))$. As this multiset contains $(2r+2z)\epsilon $ elements, we conclude that
		\[ \sigma \leq (2r+2z)\epsilon .\]
		We now estimate the size of the set $S'$.  Again we consider the Moore tree rooted at $u$.  If an arrow vertex $x$ relative to $u$ lies in $V(G) - S'$, then $x$ cannot have an in-neighbour in every directed branch of the tree and so must be an outlier of at least one directed out-neighbour of $u$.  There are $z\epsilon $ elements in $O(Z^+(u))$, so it follows that at least $A(r,z,k)-z\epsilon $ of the arrow vertices must lie in $S'$.  Therefore \[ (2r+2z)\epsilon \geq \sigma \geq |S'| \geq A(r,z,k)-z\epsilon .\] 
		Rearranging we derive the inequality \[ \epsilon \geq \frac{1}{2r+3z}A(r,z,k).\]
		This proves the theorem. 
	\end{proof}
	This result now enables us to rule out the existence of mixed graphs with excess one for $k \geq 4$ and `most' values of $r$ and $z$ for $k = 3$.
	
	\begin{theorem}\label{excess one k geq 4}
		There are no $(r,z,k;+1)$-graphs for $k \geq 4$ or for $k = 3$, $r \geq 4$ and $z > \frac{2r}{r-3}$.
	\end{theorem}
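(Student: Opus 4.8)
The plan is to reduce the whole statement to two explicit inequalities coming from Theorem~\ref{bound for non totally regular}, together with one parity argument that disposes of a single exceptional pair of degrees.

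The first step is to evaluate the arrow-vertex count $A(r,z,k)=\frac{rz}{\phi}\bigl[\frac{\lambda_1^{k-1}-1}{\lambda_1-1}-\frac{\lambda_2^{k-1}-1}{\lambda_2-1}\bigr]$ for the two smallest relevant values of $k$. Using $\lambda_1-\lambda_2=\phi$ and $\lambda_1+\lambda_2=r+z-1$ the bracket telescopes: for $k=3$ it equals $\lambda_1-\lambda_2=\phi$, so $A(r,z,3)=rz$; for $k=4$ it equals $(\lambda_1^2+\lambda_1)-(\lambda_2^2+\lambda_2)=(\lambda_1-\lambda_2)(\lambda_1+\lambda_2+1)=\phi(r+z)$, so $A(r,z,4)=rz(r+z)$. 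Since $A(r,z,k)=\sum_{t=1}^{k-1}Z_t$ is a sum of non-negative layer counts, it is non-decreasing in $k$, hence $A(r,z,k)\ge rz(r+z)$ for every $k\ge 4$. By Theorem~\ref{bound for non totally regular} a $(r,z,k)$-cage has excess at least $A(r,z,k)/(2r+3z)$, so to rule out an $(r,z,k;+1)$-graph it is enough to prove the strict inequality $A(r,z,k)>2r+3z$.

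For $k=3$ this inequality is $rz>2r+3z$, equivalently $z(r-3)>2r$; when $r\ge 4$ this is exactly the hypothesis $z>\frac{2r}{r-3}$, so the $k=3$ part is immediate. For $k\ge 4$ it suffices, by the monotonicity above, to check $rz(r+z)>2r+3z$. Writing $rz(r+z)=r^2z+rz^2$ and comparing $r^2z$ with $2r$ and $rz^2$ with $3z$ shows the inequality holds for all $r,z\ge 1$ except the pairs $(r,z)\in\{(1,1),(1,2),(2,1)\}$. The two pairs with $z=1$ are harmless: by Theorem~\ref{TuiErs theorem} an $(r,1,k;+1)$-graph must be totally regular, and Theorem~\ref{r = 1, epsilon = 1} says no such graph exists for $k\ge 3$.

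This leaves only $(r,z)=(1,2)$ with $k\ge 4$, and I expect this to be the only real obstacle: the bound here yields merely $\epsilon\ge 3/4$ (it becomes effective for $k\ge 5$, where $A(1,2,k)\ge A(1,2,5)=18>8$, but not for $k=4$). Here I would argue by parity. As in the proof of Theorem~\ref{bound for non totally regular}, a $(1,2,k;+1)$-graph containing a vertex of undirected degree $\ge 2$ would have excess exceeding $A(1,2,k)\ge 6$, a contradiction; hence any such graph is out-regular, its undirected subgraph is $1$-regular, and so its order is even. But $M(1,2,k)$ is even for every $k$ --- the Moore tree for $r=1$, $z=2$ has first two levels of sizes $1$ and $3$ and every later level of even size, as a one-line induction on the layer recurrence shows --- so $M(1,2,k)+1$ is odd, a contradiction. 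This closes the last case. The crux is that the refined counting bound of Theorem~\ref{bound for non totally regular} fails for exactly one degree pair at one value of the geodetic girth, and that case collapses on parity grounds.
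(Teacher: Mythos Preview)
Your proof is correct and follows essentially the same route as the paper: both apply the criterion $A(r,z,k)>2r+3z$ from Theorem~\ref{bound for non totally regular}, compute $A(r,z,3)=rz$ and $A(r,z,4)=rz(r+z)$, dispatch the $z=1$ exceptions via Theorems~\ref{TuiErs theorem} and~\ref{r = 1, epsilon = 1}, and handle $(r,z)=(1,2)$ at $k=4$ by the perfect-matching parity argument. The only difference is organisational: the paper treats $k\ge 5$ separately by computing $A(r,z,5)$ explicitly, whereas you invoke the monotonicity $A(r,z,k)\ge A(r,z,4)$ to cover all $k\ge 4$ at once, which is a mild simplification but not a different idea.
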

	\begin{proof}
		Setting $\epsilon = 1$ in Theorem \ref{bound for non totally regular} shows that if $A(r,z,k) > 2r+3z$, then no $(r,z,k;+1)$-graph can exist.  If $k \geq 5$, then 
		\[ A(r,z,k) \geq A(r,z,5) = rz^3+2r^2z^2+r^3z-r^2z+rz.\] 
		If $z \geq 2$, then this expression obviously exceeds $2r+3z$, so let $z = 1$. Then by Theorem \ref{TuiErs theorem} $G$ must be totally regular; however, no such graphs exist by Theorem \ref{r = 1, epsilon = 1}.
		
		Let $k = 4$.  We have $A(r,z,4) = rz^2+zr^2$.  If $r \geq 2$ and $z \geq 2$, then $rz^2+zr^2 \geq 4r + 4z > 2r+3z$.  The result follows for $z = 1$ by Theorem \ref{TuiErs theorem} and Theorem \ref{r = 1, epsilon = 1}, so we can assume that $r = 1$.  We want to show that $z^2 + z > 3z + 2$, i.e. $z^2 - 2z - 2 > 0$.  This inequality holds for $z \geq 3$, so this leaves only the pair $(r,z) = (1,2)$ to deal with.  However in this case the Moore bound $M(1,2,4)$ is even, so that $G$ must have odd order.  However, $r = 1$ implies that $G$ has a perfect matching, so this is impossible.    	
		
		Finally let $k = 3$.  We have $A(r,z,3) = rz$, so $A(r,z,3) > 2r+3z$ if and only if $r \geq 4$ and $z > \frac{2r}{r-3}$.
	\end{proof}
	
	For $k = 3$ this leaves open the cases $r = 1,2,3$, $r = 4$ and $2 \leq z \leq 8$, $r = 5$ and $2 \leq z \leq 5$, $r = 6$ and $2 \leq z \leq 4$, $r = 7,8$ and $9$ and $2 \leq z \leq 3$ and $r \geq 10$ and $z = 2$. We can deal with the majority of these cases by a slightly more sophisticated method. 
	
	\begin{lemma}\label{in-degreeofv'}
		If $G$ is an $(r,z,k;+1)$-graph that is not totally regular, then every vertex $v' \in S'$ has directed in-degree $z+1$.  Therefore $\sigma = |S'|$.
	\end{lemma}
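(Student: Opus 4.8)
The plan is to fix a vertex $v' \in S'$, set $m = d^-(v')$, and prove directly that $m \le z+1$. Since $v' \in S'$ means $m \ge z+1$, this gives $m = z+1$, whence $\sigma = \sum_{v' \in S'}(d^-(v') - z) = \sum_{v' \in S'} 1 = |S'|$. Throughout I use that $G$ is out-regular, so the Moore tree of depth $k$ rooted at any vertex has exactly $M(r,z,k)$ vertices and each vertex has a unique outlier; and that by Theorem~\ref{TuiErs theorem} we may assume $k \ge 3$, because a non-totally-regular $(r,z,k;+1)$-graph cannot have $k=2$.

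First I would record two elementary consequences of $k$-geodecity. If $x \to w$ is an arc of $G$ and $d(w,x) \le k-1$, then a shortest mixed path from $w$ to $x$ followed by the arc $x \to w$ is a non-backtracking closed walk of length $\le k$ through $w$, which together with the trivial walk at $w$ contradicts $k$-geodecity; hence $d(w,x) \ge k$ for every in-neighbour $x$ of $w$. Similarly, if $x_1$ and $x_j$ are both in-neighbours of $v'$ and $d(x_1,x_j) \le k-1$, then a shortest $x_1$--$x_j$ path extended by the arc $x_j \to v'$ is a second mixed path of length $\le k$ from $x_1$ to $v'$ (the arc $x_1 \to v'$ being the first), so in fact $d(x_1,x_j) \ge k$.

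Now fix an in-neighbour $x_1$ of $v'$ and draw the Moore tree $T$ of depth $k$ rooted at $x_1$; here $v'$ appears at level $1$ in a directed branch. By the second observation, every in-neighbour $x_j \ne x_1$ of $v'$ satisfies $d(x_1,x_j) \ge k$, so $x_j$ is either a level-$k$ vertex of $T$ or the unique outlier $o(x_1)$; since at most one $x_j$ can equal $o(x_1)$, at least $m-2$ of the in-neighbours of $v'$ occur at level $k$ of $T$. For each such $x_j$, let $T(\alpha_j)$ be the branch of $T$ containing $x_j$, with branch-root $\alpha_j \in U(x_1) \cup Z^+(x_1)$. I would then establish: \textbf{(i)} $\alpha_j \ne v'$, since otherwise the length-$(k-1)$ path in $T$ from $v'$ down to $x_j$ followed by $x_j \to v'$ is a non-backtracking closed walk of length $k$; \textbf{(ii)} the length-$(k-1)$ path in $T$ from $\alpha_j$ to $x_j$, followed by the arc $x_j \to v'$, is a mixed path of length exactly $k$ from $\alpha_j$ to $v'$, so $d(\alpha_j,v') = k$, as any shorter path would give a second mixed path of length $\le k$; \textbf{(iii)} $\alpha_j \in Z^+(x_1)$, because if $x_1 \sim \alpha_j$ then $\alpha_j \sim x_1 \to v'$ would be a mixed path of length $2 < k$ from $\alpha_j$ to $v'$, contradicting (ii); and \textbf{(iv)} the $\alpha_j$ are pairwise distinct, since two in-neighbours of $v'$ lying in the same branch $T(\alpha)$ would give two distinct mixed paths of length $k$ from $\alpha$ to $v'$. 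Combining these, the $\alpha_j$ form at least $m-2$ distinct elements of $Z^+(x_1) \setminus \{v'\}$, a set of size $z-1$ (as $x_1 \to v'$). Hence $m-2 \le z-1$, i.e. $d^-(v') = z+1$, as required.

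I expect step (iii) to be the crux: it is the only place where $k \ge 3$ is used, and it is what forces the ``surplus'' in-neighbours of $v'$ to hang off the directed part of the neighbourhood of $x_1$ rather than spilling into the $r$ undirected branches; without it one only obtains the weaker bound $d^-(v') \le r+z+1$. The remaining steps are routine manipulations of short mixed paths built by splicing tree-paths in $T$ with single arcs into $v'$, always invoking $k$-geodecity to rule out a duplicated path.
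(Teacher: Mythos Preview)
Your argument is correct, but the paper takes a more direct route. Instead of rooting the Moore tree at a directed in-neighbour $x_1$ of $v'$, the paper roots it at $v'$ itself and simply observes that each of the $r+z$ branches can contain at most one vertex of $N^-(v')$ (two in-neighbours in the same branch would yield two distinct $\le k$-paths from the branch root to $v'$); together with the single outlier $o(v')$ this gives $|N^-(v')|\le r+z+1$, and since $v'\in S'$ forces $|N^-(v')|\ge r+z+1$ one gets equality, whence $d^-(v')=z+1$ and moreover $o(v')\in Z^-(v')$. This sidesteps your step~(iii) entirely, and with it the explicit appeal to $k\ge 3$: rooting at $v'$ places the $r$ undirected in-neighbours at level~$1$ automatically, so there is no need to argue separately that the surplus directed in-neighbours avoid the undirected branches. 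Your approach does extract a little extra structure---namely that the directed in-neighbours of $v'$ are pairwise at distance $\ge k$ and, from the viewpoint of any one of them, the others sit in its directed out-branches---but for the bare statement of the lemma the paper's choice of root is cleaner.
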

	\begin{proof}
		Consider the Moore tree rooted at $v' \in S'$.  Each branch of the tree can contain at most one in-neighbour of $v'$ by $k$-geodecity.  Therefore, as $v'$ has at least $r+z+1$ in-neighbours we conclude that each branch contains exactly one in-neighbour of $v'$ and $o(v') \in Z^-(v')$. Hence $v'$ has exactly $r+z+1$ in-neighbours.
	\end{proof}
	
	\begin{lemma}\label{nov'anoutlier}
		No $v' \in S'$ is an outlier.
	\end{lemma}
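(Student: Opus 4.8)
The plan is to argue by contradiction: suppose some $v' \in S'$ is the outlier of a vertex $w$, so that $o(w) = v'$ and $d(w,v') \geq k+1$. Recall that $G$ is out-regular, so the Moore tree of depth $k$ rooted at $w$ has exactly $M(r,z,k)$ positions, all occupied by pairwise distinct vertices by $k$-geodecity, and the unique vertex of $G$ absent from it is precisely $v'$. The goal is to produce a violation of $k$-geodecity within this tree.

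First I would pin down where the in-neighbours of $v'$ sit. By Lemma~\ref{in-degreeofv'}, $v'$ has directed in-degree $z+1$, so it has exactly $r+z+1$ vertices attached to it by an edge or by an arc pointing to it; call these the in-neighbours of $v'$. No in-neighbour is $w$, since $w \sim v'$ or $w \to v'$ would give $d(w,v') = 1 \leq k$. If some in-neighbour $x$ of $v'$ appeared at depth $\leq k-1$ in the tree rooted at $w$, then the tree path from $w$ to $x$, followed by the edge or arc from $x$ to $v'$, would be a mixed path of length $\leq k$ from $w$ to $v'$ — non-backtracking, because $v'$ is absent from the tree and hence cannot be the penultimate vertex of that path — contradicting $d(w,v') \geq k+1$. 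Since every in-neighbour is a vertex of $G$ other than $v'$, it does appear in the tree, so all $r+z+1$ of them occur at depth exactly $k$, on the bottom level.

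Next I would show that each of the $r+z$ branches of the tree rooted at $w$ contains at most one in-neighbour of $v'$. Indeed, if $x_1 \neq x_2$ were in-neighbours of $v'$ lying in the same branch, their last common ancestor $y$ would have depth $j$ with $1 \leq j \leq k-1$ (it is neither $x_1$ nor $x_2$, since those are at depth $k$); then the two distinct tree paths from $y$ to $x_1$ and to $x_2$, each extended by the edge or arc to $v'$, would be two distinct non-backtracking walks of length $k-j+1 \leq k$ from $y$ to $v'$, violating $k$-geodecity. Hence $v'$ has at most $r+z$ in-neighbours, contradicting the count $r+z+1$. Therefore no vertex of $S'$ is an outlier.

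The only point I expect to need care is the non-backtracking bookkeeping — verifying that attaching the single step to or from $v'$ onto a tree path never creates a forbidden subsequence $a \sim b \sim a$ — and this is immediate, since $v'$ appears nowhere in the Moore tree rooted at $w$; everything else is a short pigeonhole argument.
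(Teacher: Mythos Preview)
Your proof is correct and follows essentially the same approach as the paper's: assume $v'\in S'$ is the outlier of some vertex, observe that its $r+z+1$ in-neighbours must all lie at depth $k$ in the Moore tree, and use $k$-geodecity to show that no branch can contain two of them, yielding the pigeonhole contradiction $r+z+1\le r+z$. The paper phrases the final step as forcing the missing in-neighbour to be $o(u)=v'$ itself (a loop), but this is the same count; your write-up is simply more explicit about the non-backtracking bookkeeping.
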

	\begin{proof}
		Assume for a contradiction that $G$ is an $(r,z,k;+1)$-graph in which $o(u) = v'$ for some $u \in V(G)$ and $v' \in S'$.  As $v'$ is the outlier of $u$, no in-neighbour of $v'$ can lie at distance less than $k$ from $u$.  By $k$-geodecity, we conclude that every branch of the Moore tree rooted at $u$ contains a unique in-neighbour of $v'$ at distance $k$ from $u$.  Therefore we must have $o(u) \in N^-(v')$ to account for the final in-neighbour of $v'$.  As $v' = o(u)$, this contradicts $k$-geodecity.
	\end{proof}
	
	\begin{lemma}\label{bound on sigma}
		For $k = 3$, if an $(r,z,3;+1)$-graph exists, then $z^2+z+r \geq \sigma \geq z+r$.	
	\end{lemma}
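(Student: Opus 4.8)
The plan is to identify $\sigma$ with $|S'|$ and then bound $|S'|$ from both sides using the depth-$3$ Moore tree. Since Theorem~\ref{r = 1, epsilon = 1} rules out totally regular $(r,z,3;+1)$-graphs, any $(r,z,3;+1)$-graph fails to be totally regular, and (exactly as at the start of the proof of Theorem~\ref{bound for non totally regular}) we may take $G$ to be out-regular with undirected degree $r$, deleting superfluous arcs and noting that a vertex of undirected degree exceeding $r$ would force a strictly larger excess. Lemma~\ref{in-degreeofv'} then gives $\sigma = |S'|$, so it is enough to prove $z+r \le |S'| \le z^2+z+r$.

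For the lower bound I would recycle the arrow-vertex count underlying Theorems~\ref{bound for small excess} and~\ref{bound for non totally regular}. Fix a vertex $u$ and draw its Moore tree; a short computation (or the formula for $A(r,z,k)$) gives exactly $A(r,z,3)=rz$ arrow vertices with respect to $u$, each sitting at distance two in an undirected branch. For each such vertex $x$, the dichotomy from Theorem~\ref{bound for small excess} applies: either all $z$ directed branches of $u$ reach $x$, which forces $x$ to have in-degree at least $z+1$, so $x \in S'$; or some directed branch misses $x$, so that $x$ is the outlier of an out-neighbour of $u$. Since $u$ has exactly $z$ out-neighbours, each with a unique outlier (excess one), at most $z$ arrow vertices can lie outside $S'$, whence $|S'| \ge rz-z$. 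This already gives $|S'|\ge z+r$ for all but a handful of small cases, namely $r\le 2$ together with $(r,z)=(3,2)$. For those I would sharpen the count: the outlier $o(u)$ is itself not surplus (Lemma~\ref{nov'anoutlier}) and is not an arrow vertex, and a closer look at the undirected branch carrying a surplus arrow vertex, together with the in-neighbours of $o(u)$ (all of which sit at Level~$3$), should produce the $r$ extra surplus vertices needed. The case $r=1$, where the arrow bound is vacuous, will be the sticking point: there I would start from a single surplus vertex $v'$, use Lemma~\ref{in-degreeofv'} to place its $z+1$ in-neighbours (one in each directed branch of the tree rooted at $v'$, the last being $o(v')$), and then propagate surplus down those directed branches, using the non-existence of short closed non-backtracking walks as the main lever. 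Getting this small-$r$ refinement to work is where I expect the real difficulty to lie.

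For the upper bound, Lemma~\ref{nov'anoutlier} says that no surplus vertex is an outlier, so for a fixed $u$ every member of $S'$ occurs somewhere in the depth-$3$ Moore tree rooted at $u$, never as $o(u)$. I would then argue that the rigidity of Lemma~\ref{in-degreeofv'} forbids a surplus vertex from occupying a Level-$3$ position or an ``edge-reached'' Level-$2$ position: at such a position, distributing all $r+z+1$ of the in- and undirected neighbours of the surplus vertex through the branches of the tree would overfill one branch, contradicting $3$-geodecity. This would confine $S'$ to the set $U(u)\cup Z^+(u)\cup Z^+(Z^+(u))$ of vertices reachable from $u$ by a single edge or a directed walk of length at most two, which by $k$-geodecity has exactly $r+z+z^2$ elements, giving $|S'|\le z^2+z+r$. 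Checking this confinement case by case against Lemmas~\ref{in-degreeofv'} and~\ref{nov'anoutlier} and the $3$-geodecity hypothesis is the delicate step in this half.
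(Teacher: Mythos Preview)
Your two halves follow quite different lines from the paper, and only the lower half is on firm ground.

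\textbf{Lower bound.} Your arrow-vertex count is sound and gives $|S'|\ge (r-1)z$, but this is strictly weaker than $r+z$ precisely in the small cases you flag, and your sketch for those cases is vague. The paper avoids this case-splitting entirely: it fixes a vertex $v'\in S'$, uses Lemma~\ref{nov'anoutlier} to say every vertex reaches $v'$ by a $\le 3$-path, and then counts the in-tree $T_{-3}(v')$. Since $v'$ has one extra directed in-neighbour and each deficient in-neighbour $v\in S$ loses at most $\sigma^-(v)(1+r+z)$ vertices from this tree, one gets
\[
M(r,z,3)+1\ \ge\ M(r,z,3)+M(r,z,2)-\sigma(1+r+z),
\]
hence $\sigma(1+r+z)\ge (r+z)^2+z$, which forces $\sigma\ge r+z$ uniformly. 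This reverse-tree idea is the missing ingredient in your plan.

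\textbf{Upper bound.} Here there is a genuine gap. Your confinement claim is internally inconsistent: excluding only Level~$3$ and the edge-reached Level-$2$ positions leaves not $U(u)\cup Z^+(u)\cup Z^+(Z^+(u))$ but the larger set that also contains $u$ and the arrow vertices $Z^+(U(u))$, of size $1+r+z+rz+z^2$, which does not give the desired bound. More seriously, the ``overfill'' heuristic does not actually exclude Level~$3$ (or arrow) positions: if $v'\in S'$ sits at Level~$3$, its $r+z$ in-neighbours other than the tree-parent are forced to Level~$3$ or to $o(u)$, and there is ample room there, so no branch is overfilled. Indeed the proof of Theorem~\ref{excess one k = 3} later shows that most arrow vertices \emph{are} in $S'$, directly contradicting the set you wrote down. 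The paper's route is different: it bounds $\sigma$ from the deficiency side. Vertices of $S$ lying in $\{u,o(u)\}\cup N^k(u)$ or in undirected branches contribute at most $|o(N^+(u))|=r+z$ to $\sigma$ (since each such $v$ must be an outlier of $\ge\sigma^-(v)$ members of $N^+(u)$); the remaining vertices of $S$, lying at distance $\le 1$ from some $u^+\in Z^+(u)$ in the tree rooted at $u^+$, contribute at most $z$ per $u^+$, hence $z^2$ in total. Summing gives $\sigma\le r+z+z^2$.
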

	\begin{proof}
		Let $G$ be an $(r,z,3;+1)$-graph. The Moore bound for $k = 3$ is \[ M(r,z,3) = r^3+z^3+3rz^2+3r^2z-r^2+z^2+r+z+1.\]
		The order of $G$ is $n = M(r,z,3)+1$.  The Moore bound for $k = 2$ is \[ M(r,z,2) = r^2+z^2+2rz+z+1.\]
		Fix some $v' \in S'$.  By Lemma \ref{nov'anoutlier}, every vertex of $G$ can reach $v'$ by a mixed path of length $\leq 3$.  We achieve a lower bound for the number of these vertices by assuming that $S \subseteq N^-(v')$.  Taking into account that $v'$ has exactly one extra directed in-neighbour by Lemma \ref{in-degreeofv'} and since all vertices of $T_{-3}(v')$ are distinct by $3$-geodecity we obtain the following inequality:
		\[ n = M(r,z,3)+1 \geq M(r,z,3)+M(r,z,2) -\sigma (1+r+z). \]
		Rearranging,
		\[ \sigma (1+r+z) \geq M(r,z,2)-1 = r^2+z^2+2rz+z. \]
		Multiplying out, it is easily seen that $\sigma \geq r+z.$
		Now we turn to the upper bound. Fix a vertex $u$ and draw the Moore tree based at $u$.  By the argument of Theorem \ref{bound for non totally regular}, we see that any vertex $v$ in $S$ that lies in $\{ u,o(u) \} \cup N^k(u)$ or any of the undirected branches of the tree must be an outlier of at least $\sigma ^-(v)$ vertices in $N^+(u)$.  Therefore these vertices between them contribute at most $r + z$ to the total $\sigma $. 
		
		Fix a directed out-neighbour $u^+$ of $u$ and consider the vertices in the Moore tree rooted at $u^+$ at distance $\leq 1$ from $u^+$.  Any vertex $v \in S$ belonging to this set will be an outlier of at least $\sigma ^-(v)$ vertices in $Z^+(u^+)$.  Between them such vertices can therefore contribute at most $z^2$ to the total $\sigma $.  Since we have now considered all vertices in $G$, the conclusion follows. 
	\end{proof}

	\begin{theorem}\label{excess one k = 3}
		There are no $(r,z,3;+1)$-graphs with $r \geq 2$.
	\end{theorem}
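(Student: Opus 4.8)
The plan is to show that for $r\ge 2$ the upper bound $\sigma\le z^2+z+r$ of Lemma~\ref{bound on sigma} is incompatible with a lower bound on $\sigma$ obtained by double-counting the ``arrow vertices'' that lie in $S'$; the mismatch arises because a depth-$3$ Moore tree has $\Theta\big((r+z)^3\big)$ vertices while $\sigma$ is only quadratic in $r$ and $z$.

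First I would record the reductions. By Theorem~\ref{r = 1, epsilon = 1} there is no totally regular $(r,z,3;+1)$-graph, so any $(r,z,3;+1)$-graph $G$ is out-regular but not totally regular, whence Lemmas~\ref{in-degreeofv'}, \ref{nov'anoutlier} and~\ref{bound on sigma} all apply: $\sigma=|S'|$, $d^-(v')=z+1$ for every $v'\in S'$, and $\sigma\le z^2+z+r$. Since $G$ is out-regular and $3$-geodetic, every out-Moore tree of depth $3$ contains exactly $M(r,z,3)=n-1$ distinct vertices, so each vertex of $G$ has a single outlier; in particular the $z$ out-neighbours of any fixed vertex $u$ have at most $z$ distinct outliers between them. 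The case $z=1$ is immediate from Theorem~\ref{TuiErs theorem} and Theorem~\ref{r = 1, epsilon = 1}, so I would assume $z\ge 2$.

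The core step is to count the pairs $(u,x)$ for which $x$ is an arrow vertex of $u$ (as in the proof of Theorem~\ref{bound for small excess}) and $x\in S'$. For fixed $u$ there are $A(r,z,3)=rz$ arrow vertices, and by the argument of Theorem~\ref{bound for small excess} any arrow vertex $x$ of $u$ with $x\notin S'$ is an outlier of an out-neighbour of $u$ (because $x\notin S'$ gives $|U(x)\cup Z^-(x)|\le r+z$, of which $r+1$ members already lie in the relevant undirected branch, leaving too few to meet all $z$ directed branches); since $u$ has at most $z$ out-neighbour-outliers altogether, at least $rz-z=z(r-1)$ arrow vertices of $u$ lie in $S'$, so the number of pairs is at least $n\,z(r-1)$. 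Conversely, fix $x\in S'$: if $x$ is an arrow vertex of $u$ then the unique mixed path of length $\le 3$ from $u$ to $x$ has the form $u\sim w\to x$ with $w\in Z^-(x)$ uniquely determined and $u\in U(w)$, and $2$-geodecity forbids $u$ from being an undirected neighbour of two distinct members of $Z^-(x)$; hence there are at most $\sum_{w\in Z^-(x)}|U(w)|=r(z+1)$ such $u$, and the number of pairs is at most $\sigma\,r(z+1)$. Combining the two counts with Lemma~\ref{bound on sigma} gives
\[ \big(M(r,z,3)+1\big)\,z(r-1)\;=\;n\,z(r-1)\;\le\;\sigma\,r(z+1)\;\le\;(z^2+z+r)\,r(z+1). \]

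Finally I would check that this inequality is false for all $r\ge 2$ and $z\ge 2$: a direct computation, e.g.\ using $M(r,z,3)\ge (r+z)^3-r^2$ and $r-1\ge\frac{r}{2}$, shows the strict reverse inequality throughout this range (the left-hand side having total degree $5$ in $r,z$ against degree $4$ on the right). This contradiction proves the theorem. The step I expect to demand the most care is extracting from the proof of Theorem~\ref{bound for small excess} the precise statement that an arrow vertex outside $S'$ is an outlier of an out-neighbour, together with the observation that each vertex has exactly one outlier; granted these, both the double count and the closing polynomial estimate are routine.
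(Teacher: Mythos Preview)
Your argument is correct and reaches the same conclusion, but by a genuinely different route from the paper's. Both proofs rest on the same two ingredients: the observation (from Theorem~\ref{bound for small excess}) that among the $rz$ arrow vertices in any Moore tree at least $(r-1)z$ must lie in $S'$, and the upper bound $\sigma=|S'|\le z^2+z+r$ from Lemma~\ref{bound on sigma}. The paper combines these \emph{locally}: it fixes a root $u\in S'$, applies the arrow-vertex observation not only to $u$ but to each vertex of $N^+(u)$, and counts the resulting \emph{distinct} elements of $S'$ appearing in the tree, obtaining $|S'|\ge (r-1)(z^2+rz)+1$; comparing with $\sigma\le z^2+z+r$ gives the contradiction directly for $r\ge2,\ z\ge2$. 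You instead combine them \emph{globally} via a double count of pairs $(u,x)$ with $x\in S'$ an arrow vertex of $u$: summing the lower bound over all $n$ roots and bounding the count from the $S'$-side by $r|Z^-(x)|=r(z+1)$ per vertex yields $n\,z(r-1)\le \sigma\,r(z+1)$. Your inequality carries the extra factor $n\sim (r+z)^3$, which makes the eventual polynomial comparison easier in principle (degree~5 against degree~4), at the cost of requiring the small routine check that the inequality indeed fails for all $r\ge2$, $z\ge2$; the paper's comparison is between two quadratics in $z$ and is disposed of in one line. Both arguments dispatch $z=1$ via Theorems~\ref{TuiErs theorem} and~\ref{r = 1, epsilon = 1}, exactly as you note.
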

	\begin{proof}
		Suppose that $G$ is an $(r,z,3;+1)$-graph with $r > 1$. We know from Lemma \ref{bound on sigma} that $z^2+r+z \geq \sigma \geq r+z$, so we can write $\sigma = z^2+r+z-\alpha $, where $0 \leq \alpha \leq z^2$.   Fix an arbitrary vertex $u$ of $G$ and draw the Moore tree rooted at $u$. There are $rz$ arrow vertices in the tree relative to $u$, i.e. $rz$ vertices in the set $Z^+(U(u))$.  If any of the arrow vertices does not belong to $S'$, then it will be an outlier of a vertex in $Z^+(u)$.  It follows that at least $(r-1)z$ of the arrow vertices belong to $S'$.  Repeating this reasoning for each vertex in $N^+(u)$ and taking into account that the vertices of $Z^+(u)$ are arrow vertices relative to any vertex in $U(u)$, we see that there are at least
		\[ (r-1)z + (r-1)z + (r-1)(r-2)z + z^2(r-1) = (r-1)(z^2+rz)  \]
		vertices of $S'$ in the tree.  In fact, if we take $u$ to be an element of $S'$, a valid assumption by Theorem \ref{r = 1, epsilon = 1}, then we can actually deduce that
		\[ \sigma = z^2+r+z -\alpha = |S'| \geq (r-1)(z^2+rz)+1.\]
		Rearranging, we see that $\alpha $ must satisfy
		\[ \alpha \leq z^2+r+z-rz^2-r^2z+z^2+rz-1 = -zr^2-(z^2-z-1)r+(2z^2+z-1).\]
		If $r \geq 2$ and $z \geq 2$, then 
		\[ \alpha \leq -zr^2-(z^2-z-1)r+(2z^2+z-1) \leq -4z-2(z^2-z-1)+(2z^2+z-1) = -z+1 < 0, \]
		so it follows that we must have $r = 1$ and, considering the parity of the Moore bound, $z$ must be odd.
	\end{proof}
	
	By Theorems \ref{excess one k geq 4} and \ref{excess one k = 3} the only remaining open case left for $k \geq 3$ is the question of the existence of a non-totally regular $(1,z,3;+1)$-graph. We finally settle this outstanding problem. 
	\begin{theorem}\label{graphs with excess one are totreg}
		If $G$ is an $(r,z,k;+1)$-graph, then $k = 2$ and $G$ is totally regular.
	\end{theorem}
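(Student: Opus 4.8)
The heavy lifting has already been done by the preceding results, so my first task is to isolate the one case that remains. If $k=2$ then $G$ is totally regular by Theorem~\ref{TuiErs theorem} and we are finished; if $k\ge 4$ there is nothing to prove by Theorem~\ref{excess one k geq 4}. For $k=3$, Theorem~\ref{excess one k = 3} disposes of every case with $r\ge 2$, and when $z=1$ Theorem~\ref{TuiErs theorem} makes $G$ totally regular, whereupon Theorem~\ref{r = 1, epsilon = 1} applies. Hence it suffices to prove that no $(1,z,3;+1)$-graph exists with $z\ge 2$. By the opening remarks in the proof of Theorem~\ref{bound for non totally regular} such a graph may be taken to be out-regular with undirected degree $1$ and directed out-degree $z$; its undirected subgraph is then a perfect matching, so, writing $u^*$ for the undirected partner of $u$, the order $n=M(1,z,3)+1$ is even. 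Since $M(1,z,3)$ and $z$ have the same parity, $z$ must be odd, so $z\ge 3$; and by Theorem~\ref{r = 1, epsilon = 1} we may assume $G$ is not totally regular, so that $S$ and $S'$ are both nonempty. I suppose such a $G$ exists and work towards a contradiction.

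The next step is to extract as much as possible from the three preparatory lemmas. Lemma~\ref{in-degreeofv'} gives $d^-(v')=z+1$ for every $v'\in S'$, so $\sigma=|S'|$, and Lemma~\ref{nov'anoutlier} says that no vertex of $S'$ is an outlier. The latter means that for $v'\in S'$ the reverse Moore tree of depth three rooted at $v'$, namely $T_{-3}(v')$, contains \emph{all} $n$ vertices of $G$, each exactly once. Were there no deficiency, this tree would contain $M(1,z,3)+M(1,z,2)$ vertices, the extra $M(1,z,2)$ being the subtree hung on the single surplus in-neighbour of $v'$ guaranteed by Lemma~\ref{in-degreeofv'}. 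So the deficiency must delete from $T_{-3}(v')$ precisely $M(1,z,3)+M(1,z,2)-n=z^2+3z+1$ vertices. A unit of deficiency sitting at a vertex at reverse-distance $t$ from $v'$ deletes a subtree of size at most $z+2$ when $t=1$ and of size exactly $1$ when $t\ge 2$ (and less once that subtree itself meets $S$), so this exact equality is very restrictive: it forces a large share of the $\sigma$ units of deficiency to lie on $N^-(v')\cup\{(v')^*\}$, the more so the closer $\sigma$ lies to its lower bound $z+1$ from Lemma~\ref{bound on sigma}. Since the same identity must hold at every $v'\in S'$ simultaneously, and $|S'|=\sigma$, I expect $\sigma$ to be pinned to a very short list of values and the sets $S$, $S'$, the matching $u\mapsto u^*$ and the in- and out-neighbourhoods of these vertices to be forced into one rigid local configuration.

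With that configuration in hand, the endgame would echo those of Theorems~\ref{r = 1, epsilon = 1} and~\ref{excess one k = 3}: choose a convenient root (we may take it in $S'$), draw its Moore tree or reverse Moore tree of depth three, and follow a single vertex or a short closed walk through the now-forced adjacencies until either a vertex is repeated in the tree or two distinct mixed paths of length at most three appear between some ordered pair, contradicting $3$-geodecity. The parity obstruction that $n$ is even while the relevant Moore bound is odd, which already settles several earlier cases, is available as a backstop.

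The crux --- and the reason $r=1$ is the last case to fall --- is that here the global counting estimates of Theorem~\ref{bound for non totally regular} and Lemma~\ref{bound on sigma} are not strict: they only confine $\sigma$ to the interval $z+1\le\sigma\le z^2+z+1$ rather than yielding a contradiction outright. Consequently no one-line inequality suffices; the argument must drain the remaining slack out of the \emph{family} of reverse-Moore-tree identities, one per surplus vertex, and then carry through a delicate structural case analysis that works uniformly in the odd parameter $z$, not merely for a handful of small values.
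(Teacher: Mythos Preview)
Your reduction to the case $r=1$, $k=3$, $z\ge 3$ odd, $G$ out-regular and not totally regular is correct and matches the paper's opening moves exactly. After that point, however, what you have written is a programme rather than a proof. You set up the reverse Moore tree identity at a surplus vertex $v'$, compute the exact number of vertices that must be trimmed by deficiency, and then say that you ``expect $\sigma$ to be pinned to a very short list of values'' and that ``the argument must\ldots carry through a delicate structural case analysis that works uniformly in the odd parameter $z$''. None of that analysis is actually performed: you do not determine $\sigma$, you do not determine where $S$ and $S'$ sit relative to the matching and the out-neighbourhoods, and you do not produce the two distinct short mixed paths that would contradict $3$-geodecity. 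The hard part of the theorem is precisely this endgame, and it is absent.

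The paper's argument is different in flavour and substantially more concrete. Rather than working with reverse trees at surplus vertices, it works with forward Moore trees at an arbitrary vertex $u$ and introduces the function $\rho(u)=|S\cap U(Z^+(u))|$, the number of undirected partners of out-neighbours of $u$ that lie in $S$. A position-by-position count of how many outliers in $N^+(u)$ each element of $S$ must generate yields $\sigma\le z+\rho_{\min}+1$; combining this with the trivial lower bound $(z+2)\rho_{\min}\le |S|\le\sigma$ forces $\rho_{\min}\in\{0,1\}$, and a short secondary count rules out $\rho_{\min}=1$. With $\rho_{\min}=0$ one gets $\sigma=|S|=|S'|=z+1$ exactly, and moreover every arrow vertex in the tree rooted at $u$---that is, all of $Z^+(u^*)$---must lie in $S'$. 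Repeating the argument at a suitably chosen out-neighbour $u_1$ of $u$ produces at least $z-1$ further elements of $S'$ in $Z^+(u_1^*)$, disjoint from the first batch, giving $|S'|\ge 2z-1>z+1$ for $z\ge 3$. This is the contradiction. The key idea you are missing is the parameter $\rho$ and the observation that arrow vertices are forced into $S'$ once $\rho_{\min}=0$; your reverse-tree framework does not obviously surface either of these.
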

	\begin{proof}
		Suppose that $G$ is an $(r,z,k;+1)$-graph with $k \geq 3$. Then by Theorem \ref{excess one k = 3} we have $r = 1$, $k = 3$ and $z \geq 3$ is odd. Also $G$ is not totally regular by Theorem \ref{r = 1, epsilon = 1}. Fix a vertex $u$ of $G$. Let $u^*$ be the undirected neighbour of $u$ and $\{ u_1,u_2,\dots ,u_z\} $ be the set $Z^+(u)$ of directed out-neighbours of $u$. Draw the Moore tree of depth $3$ rooted at $u$. 
		
		By counting the in-neighbours of a vertex $v \in S$ that are available to lie in the directed branches of the tree, it can be seen that $v$ will be the outlier of at least $\sigma ^-(v)$ vertices of $N^+(u)$ unless $v$ lies in $U(Z^+(u))$, i.e. unless $v$ is the undirected neighbour of a directed out-neighbour of $u$. For example, if $v \in Z^+(u)$, then the vertices $u^*$ and $v$ can reach $v$ by mixed paths of length $\leq k$ and $v$ has two in-neighbours already appearing in the tree (one is $u$ and the other is $v^*$ at Level 2), so that $v$ has at most $z-\sigma^-(v)-1$ further in-neighbours that can lie in the remaining $z-1$ directed branches, so that $v$ is the outlier of at least $\sigma ^-(v)$ vertices in $N^+(u)$. Repeating this analysis for each position in the Moore tree implies the result. 
		
		However, if $v$ lies in $U(Z^+(u))$ then we can only say that it will be the outlier of at least $\sigma ^-(v)-1$ vertices of $N^+(u)$ (it can be reached by two vertices of $N^+(u)$ by $\leq k$-paths and has a further $z-\sigma ^-(v)$ in-neighbours available for the remaining $z-1$ directed branches). Observe also that if an arrow vertex in the Moore tree lies in $S$, then this vertex $v$ will be an outlier of at least $\sigma ^-(v)+1$ vertices of $Z^+(u)$.
		
		Summing the deficiencies of all the vertices in $S$ to get the total deficiency $\sigma $, we conclude that there are at most $2z+1$ vertices of $S$, for at most $z$ vertices of $S$ can lie in $U(Z^+(u))$ and every other vertex $v$ of $S$ is an outlier of at least $\sigma ^-(v)$ vertices in $o(N^+(u))$, which is a multiset with size $z+1$. We now make this estimate more precise. For any vertex $u$ of $G$ define $\rho (u) = |S \cap U(Z^+(u))|$. Also let $\rho _{\min } = \min \{ \rho (u) : u \in V(G)\} $. If $u$ is a vertex at which this minimum value $\rho _{ \min } $ is achieved, then as there are exactly $\rho _{\min } $ undirected neighbours of $Z^+(u)$ that lie in $S$, the total deficiency satisfies $\sigma \leq z+\rho _{\min } +1$. 
		
		Suppose that $\rho _{\min }\geq 1$. For any vertex $u$, the sets $U(Z^+(u))$, $U(Z^+(u^*))$ and $U(Z^+(u_i))$ for $1 \leq i \leq z$ are mutually disjoint and each contain at least $\rho _{\min }$ vertices of $S$, which are distinct by $3$-geodecity. Thus \begin{equation}\label{eqn:r=1k=3}
			(z+2)\rho _{\min } \leq |S| \leq \sigma \leq z+\rho _{\min }+1.
		\end{equation}
		Rearranging, we see that either $\rho _{\min } = 0$ or $\rho _{\min } = 1$. Suppose that $\rho _{\min } = 1$; then we have equality in Equation \ref{eqn:r=1k=3}, which implies that $|S| = z+2$ and $\rho (u^*) = \rho (u_i) = 1$ for $1 \leq i \leq z$. Then as $\rho (u) = 1$, there is a directed out-neighbour of $u$ (say $u_1$) such that $u_1^* \in S$. Applying the same reasoning to $u_1$, we conclude that each of the $z+2$ sets $U(Z^+(u_1))$, $U(Z^+(u_1^*))$ and $U(Z^+(u'))$, where $u'$ is any directed out-neighbour of $u_1$, each contain one element of $S$; however, including $u_1^*$, we see that there would be at least $z+3$ elements of $S$, a contradiction. 
		
		Thus $\rho _{\min } = 0$. Hence by Lemma \ref{bound on sigma} we have $\sigma = z+1$. As no vertices of $S$ lie in $U(Z^+(u))$, each of the $\geq z+1$ elements of $S$ is an outlier of at least $\sigma ^-(v)$ vertices of $N^+(u)$, so we must have $|S| = z+1$, each vertex in $S$ has directed in-degree $z-1$ and all outliers of vertices in $N^+(u)$ lie in $S$. If an arrow vertex in the tree belongs to $S$, then there would be at least $z+2$ outliers of vertices in $N^+(u)$, whereas if a vertex of $V(G)-(S \cup S')$ is an arrow vertex then it would be an outlier of a vertex in $N^+(u)$ in addition to the $z+1$ outliers accounted for by $S$; both situations are impossible. It follows that each of the arrow vertices in the Moore tree rooted at $u$, i.e. the set $Z^+(u^*)$, must lie in $S'$.
		
		As there are only $z+1$ vertices in $S$, at least one of the directed out-neighbours of $u$, say $u_1$, must have $\rho (u_1) \leq 1$. If $\rho (u_1) = 0$, then as above each of the arrow vertices in the Moore tree rooted at $u_1$ must lie in $S'$, so there would be at least $2z$ vertices of $S'$ in the Moore tree rooted at $u$. Suppose that $\rho (u_1) = 1$; then if $\geq 2$ of the arrow vertices in the Moore tree rooted at $u_1$ lie in $S$, i.e. if $|Z^+(u_1^*) \cap S| \geq 2$, then the $z$ vertices of $S$ outside of $U(Z^+(u_1))$ would account for at least $z+2$ outliers of the vertices in $N^+(u_1)$, which is impossible. Moreover, any arrow vertex of $u_1$ that does not lie in $S \cup S'$ will also be an outlier of a vertex in $Z^+(u_1)$, so at least $z-1$ of the arrow vertices in the tree rooted at $u_1$ are in $S'$.  However, together with the $z$ vertices of $Z^+(u^*)$, we have now produced at least $2z-1$ vertices of $S'$ in the Moore tree rooted at $u$, so that $z+1 = \sigma = |S'| \geq 2z-1$, which is impossible for $z \geq 3$.  \end{proof}
	This completes our classification of $k$-geodetic mixed graphs with excess one for $k \geq 3$. In \cite{TuiErs} the authors conjectured that any mixed graph with excess one is totally regular; Theorem \ref{graphs with excess one are totreg} proves this conjecture.

	\section{Bounds on totally regular mixed graphs with small defect}\label{improved Fiol bound}
	
	We now return to the degree/diameter problem for mixed graphs and extend the counting arguments from the previous section to deal with totally regular mixed graphs with small defect.  The first non-trivial bound for such graphs was derived in \cite{DalFioLop}, where it is shown that for a totally regular $(r,z,k;-\delta )$-graph with $k \geq 3$ the defect is bounded below by the undirected degree $r$. There is equality for $k = 3$ and hence the bound is tight.  We present a new upper bound on the order of totally regular $(1,1,k;-\delta )$-graphs that improves on the result of \cite{DalFioLop} for $k \geq 4$.
	
	Let $G$ be a totally regular mixed graph with undirected degree $r = 1$, directed degree $z = 1$ and diameter $k$.  We will denote the unique undirected neighbour of a vertex $v$ of $G$ by $v^*$, the directed in-neighbour by $v^-$ and the directed out-neighbour by $v^+$.  Since $r = 1$, $G$ contains a perfect matching and must have even order. 
	
	For any vertex $v$ of $G$ we make the further definition that $v^1 = (v^+)^*$, that is $v^1$ is the undirected neighbour of the directed out-neighbour of $v$.  We extend this definition as follows. We set $v^0 = v$ and by iteration define $v^s = (v^{s-1})^1$ for $s \geq 2$.  By analogy we specify that $v^{-1} = (v^*)^-$, so that $v^-$ is the directed in-neighbour of the undirected neighbour of $v$. Again we set iteratively $v^{-s} = (v^{-(s-1)})^-$. Notice that $(v^1)^{-1} = (v^{-1})^1 = v$ for all $v \in V(G)$.    
	
	We draw the Moore tree of $G$ of depth $k$ based at a vertex $u$ as indicated in Figure \ref{fig:rz1Mooretree}. In particular, if a vertex at Level $t \leq k-1$ of the tree has both an undirected neighbour and a directed out-neighbour below it at Level $t+1$ of the tree, then we will place the undirected neighbour on the left and label the vertices accordingly.  If $k \geq 3$, then there will be vertices repeated in the tree, so that a vertex of $G$ can receive distinct labels in the Moore tree; nevertheless, for counting purposes we will still distinguish between the position labels in the tree. The left-hand side branch beginning at $u_1$ is the \emph{undirected branch} and the right-hand side branch beginning at $u_2$ is the \emph{directed branch}.

	\begin{figure}\centering
		\begin{tikzpicture}[middlearrow=stealth,x=0.2mm,y=-0.2mm,inner sep=0.1mm,scale=1.35,
			thick,vertex/.style={circle,draw,minimum size=12,font=\tiny,fill=white},edge label/.style={fill=white}]
			\tiny
			\node at (0,0) [vertex] (u0) {$u_0$};
			
			\node at (-150,60) [vertex] (u1) {$u_1$};
			\node at (150,60) [vertex] (u2) {$u_2$};
			
			\node at (-150,120) [vertex] (u3) {$u_3$};
			\node at (50,120) [vertex] (u4) {$u_4$};
			\node at (250,120) [vertex] (u5) {$u_5$};

			\node at (-200,180) [vertex] (u6) {$u_6$};
			\node at (-100,180) [vertex] (u7) {$u_7$};
			\node at (50,180) [vertex] (u8) {$u_8$};
			\node at (200,180) [vertex] (u9) {$u_9$};
			\node at (300,180) [vertex] (u10) {$u_{10}$};

			\node at (-200,240) [vertex] (u11) {$u_{11}$};
			\node at (-125,240) [vertex] (u12) {$u_{12}$};
			\node at (-75,240) [vertex] (u13) {$u_{13}$};
			\node at (25,240) [vertex] (u14) {$u_{14}$};
			\node at (75,240) [vertex] (u15) {$u_{15}$};
			\node at (200,240) [vertex] (u16) {$u_{16}$};
			\node at (275,240) [vertex] (u17) {$u_{17}$};
			\node at (325,240) [vertex] (u18) {$u_{18}$};

			\node at (-215,300) [vertex] (u19) {$u_{19}$};
			\node at (-185,300) [vertex] (u20) {$u_{20}$};
			\node at (-125,300) [vertex] (u21) {$u_{21}$};
			\node at (-90,300) [vertex] (u22) {$u_{22}$};
			\node at (-60,300) [vertex] (u23) {$u_{23}$};
			\node at (25,300) [vertex] (u24) {$u_{24}$};
			\node at (60,300) [vertex] (u25) {$u_{25}$};
			\node at (90,300) [vertex] (u26) {$u_{26}$};
			\node at (185,300) [vertex] (u27) {$u_{27}$};
			\node at (215,300) [vertex] (u28) {$u_{28}$};
			\node at (275,300) [vertex] (u29) {$u_{29}$};
			\node at (310,300) [vertex] (u30) {$u_{30}$};
			\node at (340,300) [vertex] (u31) {$u_{31}$};

			\path
			(u0) edge (u1)
			(u0) edge [middlearrow] (u2)
			
			(u1) edge [middlearrow] (u3)
			(u2) edge (u4)
			(u2) edge [middlearrow] (u5)
			
			(u3) edge (u6)
			(u3) edge [middlearrow] (u7)
			(u4) edge [middlearrow] (u8)
			(u5) edge (u9)
			(u5) edge [middlearrow] (u10)
			
			(u7) edge (u12)
			(u8) edge (u14)
			(u10) edge (u17)
			(u6) edge [middlearrow] (u11)
			(u7) edge [middlearrow] (u13)
			(u8) edge [middlearrow] (u15)
			(u9) edge [middlearrow] (u16)
			(u10) edge [middlearrow] (u18)

			(u11) edge (u19)
			(u11) edge [middlearrow] (u20)
			(u12) edge [middlearrow] (u21)
			(u13) edge (u22)
			(u13) edge [middlearrow] (u23)
			(u14) edge [middlearrow] (u24)
			(u15) edge (u25)
			(u15) edge [middlearrow] (u26)
			(u16) edge (u27)
			(u16) edge [middlearrow] (u28)
			(u17) edge [middlearrow] (u29)
			(u18) edge (u30)
			(u18) edge [middlearrow] (u31)
			;
		\end{tikzpicture}
		\caption{The Moore tree for $r = z = 1$ and $k = 5$.}
		\label{fig:rz1Mooretree}
	\end{figure}
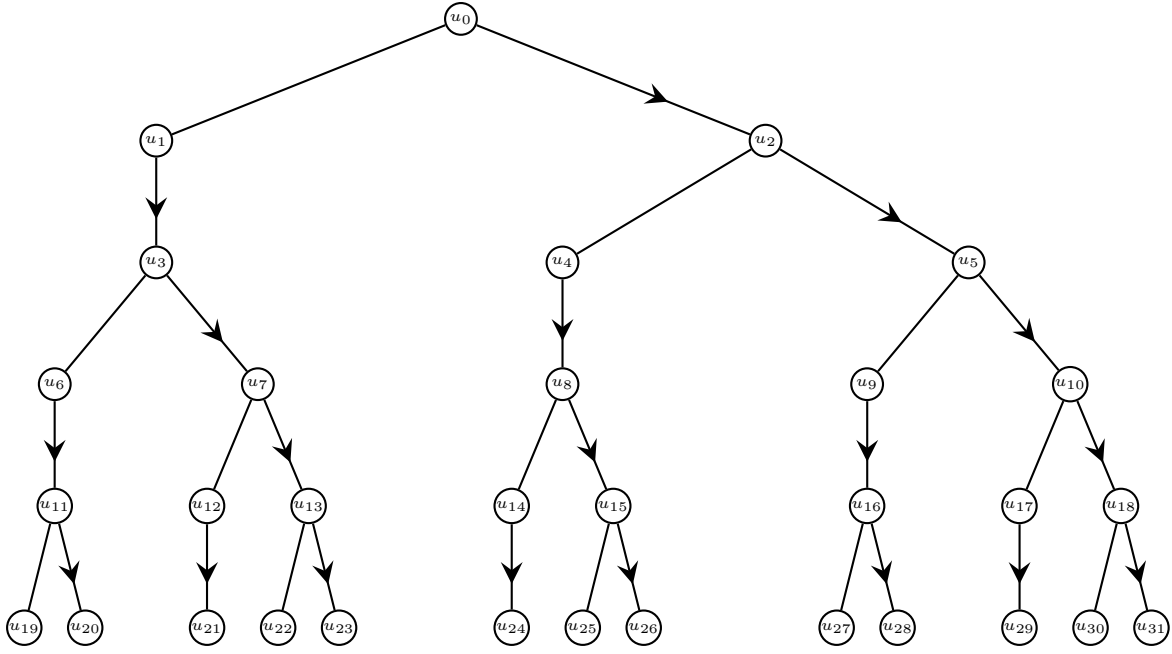
	
	To reiterate, an arrow vertex in the Moore tree of $G$ rooted at $u$ is a vertex $x$ at a Level $t$, $2 \leq t \leq k-1$, of the tree in the undirected branch such that $x$ appears as the terminal vertex of an arc with its initial vertex at Level $t-1$. Unlike the $k$-geodetic case, arrow vertices can be equal in $G$ or be equal to a vertex in the directed branch; therefore we will slightly abuse the term `arrow vertex' by associating it, not with a vertex of $G$, but with a position or label in the tree.
	
	Consider an arrow vertex $x$ at Level $t$ of the Moore tree. Its directed in-neighbour $x^-$ appears at Level $t-1$ and its undirected neighbour $x^*$ at Level $t+1$, so that the entire in-neighbourhood $N^-(x) = \{ x^-,x^*\} $ is also contained in the undirected branch of the Moore tree. As $G$ has diameter $k$, $u_2$ must be able to reach $x$ by a mixed path of length $\leq k$, so it follows that at least one of $x^-,x^*$ also appears in the directed branch of $G$. For every such occurrence there will be an additional repeat of $u_0$, so that we can bound the defect $\delta $ from below by counting the smallest possible number of positions in the undirected branch such that for every arrow vertex $x$ either $x^*$ or $x^-$ lies in one of these positions.  We will call such a set of positions a \emph{transversal} of the undirected branch. 
	
	We will now focus on the undirected branch of the Moore tree. The undirected branch of a Moore tree of depth 8 is shown in Figure \ref{fig:undirectedk8}. For convenience we use a different labelling of the undirected branch; for example, vertex $1$ corresponds to $u_1$ in Figure \ref{fig:rz1Mooretree}, $2$ to $u_3$, $3$ to $u_6$, $5$ to $u_{11}$, etc. For the moment we ignore the complication that a vertex of $G$ could appear multiple times as an arrow vertex in this tree. Under this assumption we will show that $\delta $ is bounded from below by the size of a minimum transversal of the Moore tree.

	\begin{figure}\centering
		\begin{tikzpicture}[middlearrow=stealth,x=0.2mm,y=-0.2mm,inner sep=0.1mm,scale=1.35,
			thick,vertex/.style={circle,draw,minimum size=12,font=\tiny,fill=white},edge label/.style={fill=white}]
			\tiny

			\node at (0,0) [vertex] (1) {1};

			\node at (0,60) [vertex] (2) {2};

			\node at (-150,120) [vertex] (3) {3};
			\node at (150,120) [vertex] (4) {4};

			\node at (-150,180) [vertex] (5) {5};
			\node at (50,180) [vertex] (6) {6};
			\node at (250,180) [vertex] (7) {7};

			\node at (-200,240) [vertex] (8) {8};
			\node at (-100,240) [vertex] (9) {9};
			\node at (50,240) [vertex] (10) {10};
			\node at (200,240) [vertex] (11) {11};
			\node at (300,240) [vertex] (12) {12};
			
			\node at (-200,300) [vertex] (13) {13};
			\node at (-130,300) [vertex] (14) {14};
			\node at (-70,300) [vertex] (15) {15};
			\node at (20,300) [vertex] (16) {16};
			\node at (80,300) [vertex] (17) {17};
			\node at (200,300) [vertex] (18) {18};
			\node at (270,300) [vertex] (19) {19};
			\node at (330,300) [vertex] (20) {20};	
			
			\node at (-220,360) [vertex] (21) {21};
			\node at (-180,360) [vertex] (22) {22};					
			\node at (-130,360) [vertex] (23) {23};	
			\node at (-90,360) [vertex] (24) {24};
			\node at (-50,360) [vertex] (25) {25};
			\node at (20,360) [vertex] (26) {26};
			\node at (60,360) [vertex] (27) {27};	 
			\node at (100,360) [vertex] (28) {28};
			\node at (180,360) [vertex] (29) {29};	
			\node at (220,360) [vertex] (30) {30};
			\node at (270,360) [vertex] (31) {31};
			\node at (310,360) [vertex] (32) {32};
			\node at (350,360) [vertex] (33) {33};	        	       	    	    	    
			
			\node at (-220,420) [vertex] (34) {34};
			\node at (-190,420) [vertex] (35) {35};
			\node at (-170,420) [vertex] (36) {36};
			\node at (-140,420) [vertex] (37) {37};	
			\node at (-120,420) [vertex] (38) {38};
			\node at (-90,420) [vertex] (39) {39};
			\node at (-60,420) [vertex] (40) {40};
			\node at (-40,420) [vertex] (41) {41};
			\node at (10,420) [vertex] (42) {42};
			\node at (30,420) [vertex] (43) {43};
			\node at (60,420) [vertex] (44) {44};
			\node at (90,420) [vertex] (45) {45};
			\node at (110,420) [vertex] (46) {46};
			\node at (180,420) [vertex] (47) {47};
			\node at (210,420) [vertex] (48) {48};
			\node at (230,420) [vertex] (49) {49};
			\node at (260,420) [vertex] (50) {50};
			\node at (280,420) [vertex] (51) {51}; 
			\node at (310,420) [vertex] (52) {52};
			\node at (340,420) [vertex] (53) {53};
			\node at (360,420) [vertex] (54) {54};
			\path
			
			(21) edge [middlearrow] (34)
			(22) edge (35)
			(22) edge [middlearrow] (36)
			(23) edge (37)
			(23) edge [middlearrow] (38)
			(24) edge [middlearrow] (39)
			(25) edge (40)
			(25) edge [middlearrow] (41)
			(26) edge (42)
			(26) edge [middlearrow] (43)
			(27) edge [middlearrow] (44)
			(28) edge (45)
			(28) edge [middlearrow] (46)
			(29) edge [middlearrow] (47)
			(30) edge (48)
			(30) edge [middlearrow] (49)
			(31) edge (50)
			(31) edge [middlearrow] (51)
			(32) edge [middlearrow] (52)
			(33) edge (53)
			(33) edge [middlearrow] (54)
			
			(13) edge (21)
			(15) edge (24)  
			(17) edge (27)
			(18) edge (29)
			(20) edge (32)              
			(13) edge [middlearrow] (22)
			(14) edge [middlearrow] (23)
			(15) edge [middlearrow] (25)
			(16) edge [middlearrow] (26)
			(17) edge [middlearrow] (28)
			(18) edge [middlearrow] (30)
			(19) edge [middlearrow] (31)
			(20) edge [middlearrow] (33)							
			
			(8) edge [middlearrow] (13)
			(9) edge [middlearrow] (15)
			(10) edge [middlearrow] (17)
			(11) edge [middlearrow] (18) 
			(12) edge [middlearrow] (20)  
			(9) edge (14)
			(10) edge (16)
			(12) edge (19)

			(1) edge [middlearrow] (2)

			(2) edge (3)
			(2) edge [middlearrow] (4)

			(3) edge [middlearrow] (5)
			
			(4) edge (6)
			(4) edge [middlearrow] (7)

			(5) edge (8)
			(5) edge [middlearrow] (9)
			(6) edge [middlearrow] (10)
			(7) edge (11)
			(7) edge [middlearrow] (12)
			
			;
		\end{tikzpicture}
		\caption{The undirected branch for $k = 8$}
		\label{fig:undirectedk8}
	\end{figure}
	
	Consider an arrow vertex $x$ at Level $t$ of the tree, where $2 \leq t \leq k-1$. In the undirected branch shown in Figure \ref{fig:undirectedk8} these are vertices $2$, $4$, $5$, $7$, $9$, $10$, $12$, $13$, $15$, $17$, $18$, $20$, $22$, $23$, $25$, $26$, $28$, $30$, $31$ and $33$. As already noted, either the undirected neighbour $x^*$ or the directed in-neighbour $x^-$ of $x$ must occur in the directed branch of the Moore tree, and each such occurrence counts towards the number of repeats of the root vertex $u$ of the tree.  However, the in-neighbourhoods of the arrow vertices overlap; for example, the vertex $8$ is an in-neighbour both of the vertex $5$ and the vertex $13$. We will partition the positions in the undirected branch of the Moore tree corresponding to vertices in the in-neighbourhoods of the arrow vertices into \emph{chains}. 
	
	A chain is a maximal string of vertices in the undirected branch of the Moore tree of the form $v=v^0,v^1,v^2,v^3, \ldots $, where $v$ is an in-neighbour of an arrow vertex. If $v$ is at Level $t \leq k-2$, then $v^2$ is at Level $t+2$.  For example $1,3,8,21$ is a chain which we have labelled (a) in Figure \ref{fig:undirectedk8}.  Every in-neighbourhood of an arrow vertex is contained in a unique chain.  Every arrow vertex at Level $t$, where $2 \leq t \leq k-2$, is the beginning of a chain, as is the vertex $1$.  Conversely, by iterating the $^-$ operation on an in-neighbour of an arrow vertex, i.e. considering the sequence of vertices $v,v^{-1},v^{-2}, \dots $, we see that every chain begins either at $1$ or an arrow vertex at Level $t \leq k-2$. This decomposition is displayed for $k = 8$ in Figure \ref{fig:chains}.

	\begin{figure}\centering
		\begin{tikzpicture}[middlearrow=stealth,x=0.2mm,y=-0.2mm,inner sep=0.1mm,scale=1.35,
			thick,vertex/.style={circle,draw,minimum size=12,font=\tiny,fill=white},edge label/.style={fill=white}]
			\tiny

			\node at (0,0) [vertex,label=above left:(a)] (1) {1};

			\node at (0,60) [vertex,label=above left:(b)] (2) {2};

			\node at (-150,120) [vertex,label=above left:(a)] (3) {3};
			\node at (150,120) [vertex,label=above left:(c)] (4) {4};

			\node at (-150,180) [vertex,label=above left:(d)] (5) {5};
			\node at (50,180) [vertex,label=above left:(b)] (6) {6};
			\node at (250,180) [vertex,label=above left:(e)] (7) {7};

			\node at (-200,240) [vertex,label=above left:(a)] (8) {8};
			\node at (-100,240) [vertex,label=above left:(f)] (9) {9};
			\node at (50,240) [vertex,label=above left:(g)] (10) {10};
			\node at (200,240) [vertex,label=above left:(c)] (11) {11};
			\node at (300,240) [vertex,label=above left:(h)] (12) {12};
			
			\node at (-200,300) [vertex,label=above left:(i)] (13) {13};
			\node at (-130,300) [vertex,label=above left:(d)] (14) {14};
			\node at (-70,300) [vertex,label=above left:(k)] (15) {15};
			\node at (20,300) [vertex,label=above left:(b)] (16) {16};
			\node at (80,300) [vertex,label=above left:(j)] (17) {17};
			\node at (200,300) [vertex,label=above left:(l)] (18) {18};
			\node at (270,300) [vertex,label=above left:(e)] (19) {19};
			\node at (330,300) [vertex,label=above left:(m)] (20) {20};	
			
			\node at (-220,360) [vertex,label=above left:(a)] (21) {21};
			\node at (-180,360) [vertex] (22) {22};					
			\node at (-130,360) [vertex] (23) {23};	
			\node at (-90,360) [vertex,label=above left:(f)] (24) {24};
			\node at (-50,360) [vertex] (25) {25};
			\node at (20,360) [vertex] (26) {26};
			\node at (60,360) [vertex,label=above left:(g)] (27) {27};	 
			\node at (100,360) [vertex] (28) {28};
			\node at (180,360) [vertex,label=above left:(c)] (29) {29};	
			\node at (220,360) [vertex] (30) {30};
			\node at (270,360) [vertex] (31) {31};
			\node at (310,360) [vertex,label=above left:(h)] (32) {32};
			\node at (350,360) [vertex] (33) {33};	        	       	    	    	    
			
			\node at (-220,420) [vertex] (34) {34};
			\node at (-190,420) [vertex,label=above left:(i)] (35) {35};
			\node at (-170,420) [vertex] (36) {36};
			\node at (-140,420) [vertex,label=above left:(d)] (37) {37};	
			\node at (-120,420) [vertex] (38) {38};
			\node at (-90,420) [vertex] (39) {39};
			\node at (-60,420) [vertex,label=above left:(k)] (40) {40};
			\node at (-40,420) [vertex] (41) {41};
			\node at (10,420) [vertex,label=above left:(b)] (42) {42};
			\node at (30,420) [vertex] (43) {43};
			\node at (60,420) [vertex] (44) {44};
			\node at (90,420) [vertex,label=above left:(j)] (45) {45};
			\node at (110,420) [vertex] (46) {46};
			\node at (180,420) [vertex] (47) {47};
			\node at (210,420) [vertex,label=above left:(l)] (48) {48};
			\node at (230,420) [vertex] (49) {49};
			\node at (260,420) [vertex,label=above left:(e)] (50) {50};
			\node at (280,420) [vertex] (51) {51}; 
			\node at (310,420) [vertex] (52) {52};
			\node at (340,420) [vertex,label=above left:(m)] (53) {53};
			\node at (360,420) [vertex] (54) {54};
			\path
			
			(21) edge [middlearrow] (34)
			(22) edge (35)
			(22) edge [middlearrow] (36)
			(23) edge (37)
			(23) edge [middlearrow] (38)
			(24) edge [middlearrow] (39)
			(25) edge (40)
			(25) edge [middlearrow] (41)
			(26) edge (42)
			(26) edge [middlearrow] (43)
			(27) edge [middlearrow] (44)
			(28) edge (45)
			(28) edge [middlearrow] (46)
			(29) edge [middlearrow] (47)
			(30) edge (48)
			(30) edge [middlearrow] (49)
			(31) edge (50)
			(31) edge [middlearrow] (51)
			(32) edge [middlearrow] (52)
			(33) edge (53)
			(33) edge [middlearrow] (54)
			
			(13) edge (21)
			(15) edge (24)  
			(17) edge (27)
			(18) edge (29)
			(20) edge (32)              
			(13) edge [middlearrow] (22)
			(14) edge [middlearrow] (23)
			(15) edge [middlearrow] (25)
			(16) edge [middlearrow] (26)
			(17) edge [middlearrow] (28)
			(18) edge [middlearrow] (30)
			(19) edge [middlearrow] (31)
			(20) edge [middlearrow] (33)							
			
			(8) edge [middlearrow] (13)
			(9) edge [middlearrow] (15)
			(10) edge [middlearrow] (17)
			(11) edge [middlearrow] (18) 
			(12) edge [middlearrow] (20)  
			(9) edge (14)
			(10) edge (16)
			(12) edge (19)

			(1) edge [middlearrow] (2)

			(2) edge (3)
			(2) edge [middlearrow] (4)

			(3) edge [middlearrow] (5)
			
			(4) edge (6)
			(4) edge [middlearrow] (7)

			(5) edge (8)
			(5) edge [middlearrow] (9)
			(6) edge [middlearrow] (10)
			(7) edge (11)
			(7) edge [middlearrow] (12)
			
			;
		\end{tikzpicture}
		\caption{The chain decomposition for $k = 8$}
		\label{fig:chains}
	\end{figure}
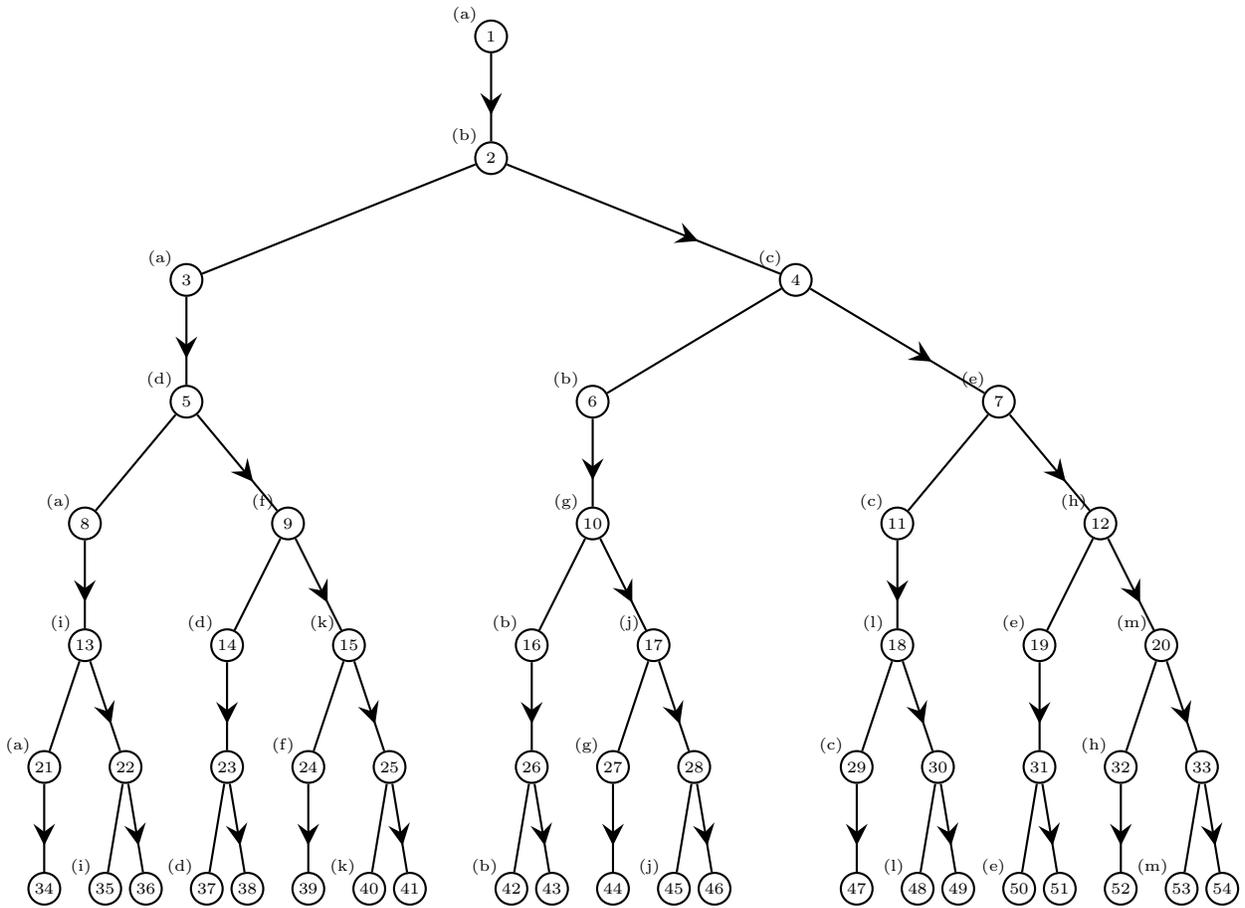
	
	We will call the number of vertices (i.e. positions in the Moore tree) in a chain $v,v^1,v^2,\dots $ the \emph{length} of the chain. For example, for $k = 8$ the chain $1,3,8,21$ has length $4$. Let $C$ be a chain of length $\ell $. Any pair of consecutive vertices in $C$ is the in-neighbourhood of an arrow vertex, so at least one of them must appear in the directed branch of the Moore tree. As any vertex in $C$ is contained in two pairs of consecutive vertices of the chain, it follows that the smallest transversal of $C$, i.e. the smallest number of vertices in the Moore tree that intersect every in-neighbourhood of arrow vertices that is contained in the chain, is $\lceil \frac{\ell }{3}\rceil $ (this follows from the domination number of the path \cite{ChaLesZha}). 
	
	The number of chains beginning at Level $t$ of the tree, where $2 \leq t \leq k-2$, is equal to the number of arrow vertices at Level $t$. From the calculation of Theorem \ref{bound for small excess} we know that this number is \[ Z_t = \frac{1}{2^{t-1}\sqrt {5} }((1+\sqrt {5})^{t-1}-(1-\sqrt {5})^{t-1}).\]
	The first vertex $1$ of the undirected branch is also the first vertex of a chain. We therefore define $Z'_t = 1$ for $t = 1$ and $Z'_t = Z_t$ for $2 \leq t \leq k-1$.  The length of a chain beginning at Level $t$ is $\ell (t) = 1+\lfloor \frac{k-t}{2} \rfloor $.  It follows from our argument that the smallest transversal of the undirected branch of the Moore tree has size \[ \sum^{k-2}_{t=1} Z'_t\left\lceil \frac{1}{3} + \frac{1}{3}\left\lfloor \frac{k-t}{2} \right\rfloor \right\rceil  .\] 
	This expression gives a lower bound for the number of positions in the undirected branch of the Moore tree that are occupied by vertices that also appear in the directed branch. It could happen that these positions in the undirected branch are actually occupied by the same vertex, which would reduce the number of vertices that would have to be repeated in the directed branch. 
	
	However, it is easily seen that this does not affect our lower bound for the defect. Let $T$ be the transversal of the undirected branch that is repeated in the directed branch of a largest $(r,z,k;-\delta )$-graph. If $s$ positions of $T$ are occupied by the same vertex $v$, then $v$ occurs at least once in the directed branch of the Moore tree, but is also repeated at least $s-1$ times in the undirected branch, so that this set of $s$ positions nevertheless contributes at least $s$ to the total defect $\delta $. We therefore have proved the following theorem.   
	
	\begin{theorem}
		Any totally regular $(1,1,k;-\delta)$-graph has defect
		\[ \delta \geq \sum^{k-2}_{t=1} Z'_t\left\lceil \frac{1}{3} + \frac{1}{3}\left\lfloor \frac{k-t}{2} \right\rfloor \right\rceil  \] 
		for $k \geq 3$.	
	\end{theorem}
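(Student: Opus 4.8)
The plan is to run, inside a single Moore tree, the counting argument developed above, so the work is mostly a matter of assembling the pieces and controlling double-counting.

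First I would fix a vertex $u$ and draw the Moore tree of depth $k$ rooted at $u$, separating the undirected branch (rooted at $u_1$) from the directed branch (rooted at $u_2$). For each arrow vertex $x$ of the undirected branch, i.e. each position at a Level $t$ with $2\le t\le k-1$ occupied by the head of an arc from Level $t-1$, the whole in-neighbourhood $N^-(x)=\{x^-,x^*\}$ already lies in the undirected branch, with $x^-$ at Level $t-1$ and $x^*$ at Level $t+1$. Since $G$ has diameter $k$, there is a mixed path of length $\le k$ from $u_2$ to $x$; a shortest such path is non-backtracking and its penultimate vertex lies in $\{x^-,x^*\}$, so one of $x^-,x^*$ also occurs in the directed branch. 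Writing $S$ for the set of positions of the undirected branch whose occupant is repeated elsewhere in the tree, we conclude that $S$ meets $N^-(x)$ for every arrow vertex $x$; that is, $S$ is a transversal of the undirected branch. To pass from positions to the defect I would note, as in the discussion, that if $s$ positions of $S$ are occupied by one vertex $v$, then $v$ appears at least once in the directed branch and is repeated at least $s-1$ further times within the undirected branch, so this cluster contributes at least $s$ to $\delta$; summing over clusters gives $\delta\ge|S|\ge|T|$, where $T$ is a minimum transversal. It therefore suffices to compute $|T|$.

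Second I would establish the chain decomposition. A chain is a maximal string $v=v^0,v^1,v^2,\dots$ of positions of the undirected branch with $v$ an in-neighbour of an arrow vertex; since $v^{s+1}=(v^s)^1$ sits two levels below $v^s$, consecutive positions $\{v^s,v^{s+1}\}$ of a chain are exactly the in-neighbourhood of a single arrow vertex. I would check that every in-neighbourhood of an arrow vertex lies in a unique chain, that each chain begins either at the Level-$1$ vertex $u_1$ or at an arrow vertex at a Level $t\le k-2$, and that the chain beginning at Level $t$ has length $\ell(t)=1+\lfloor\frac{k-t}{2}\rfloor$. A minimum transversal of a single chain of length $\ell$ has size $\lceil\ell/3\rceil$, which is the domination number of the path on $\ell$ vertices \cite{ChaLesZha}. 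The number of chains beginning at Level $t$ equals the number $Z_t$ of arrow vertices there for $2\le t\le k-2$, plus the single chain headed by $u_1$; putting $Z'_1=1$ and $Z'_t=Z_t$ for $2\le t\le k-1$, with $Z_t$ the Fibonacci-type count obtained in Theorem~\ref{bound for small excess}, and summing over starting levels yields
\[ |T|=\sum_{t=1}^{k-2}Z'_t\left\lceil\frac13+\frac13\left\lfloor\frac{k-t}{2}\right\rfloor\right\rceil, \]
which is the asserted bound (and for $k=3$ it reduces to the single chain $u_1,u_1^1$ of length two, recovering $\delta\ge 1$).

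The main obstacle I expect is the bookkeeping around the chain decomposition and the repeated-vertex reduction: confirming that chains are genuinely disjoint and cover every arrow-vertex in-neighbourhood, that a chain cannot be prolonged past $u_1$ or past a Level-$(k-2)$ arrow vertex, that its length is exactly $\ell(t)$, and that neither overlapping chains nor a single vertex of $G$ spanning several tree positions ever lets us over-count $|T|$ relative to $\delta$. Once this is pinned down the result is immediate, since the count of arrow vertices per level is the recurrence already solved in Theorem~\ref{bound for small excess} and the per-chain transversal size is the elementary domination number of a path.
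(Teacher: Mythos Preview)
Your proposal is the paper's own argument almost verbatim: the text of Section~\ref{improved Fiol bound} preceding the theorem develops exactly this sequence (arrow vertices forcing an in-neighbour into the directed branch, the chain decomposition indexed by starting level, the path-domination count $\lceil \ell/3\rceil$, and the repeated-vertex reduction), so there is nothing materially different to compare.

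One small slip to tighten: take $S$ to be the set of positions of the undirected branch whose occupant also appears in the \emph{directed} branch, not merely ``elsewhere in the tree''. With your stated definition a vertex $v$ repeated $s$ times entirely inside the undirected branch would have all $s$ positions in $S$ yet contribute only $s-1$ to $\delta$, so $\delta\ge|S|$ can fail. With the narrower definition $S$ is still a transversal (your penultimate-vertex argument shows precisely that some in-neighbour of each arrow vertex lies in the directed branch), and now a cluster of $s$ positions in $S$ with common occupant $v$ gives at least $s+1$ occurrences of $v$ in the tree, hence contribution $\ge s$ to $\delta$; this is how the paper phrases it.
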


	\section{Directed and mixed cages}\label{directed and mixed cages}
	We summarise here the results of a computer search for the smallest possible digraphs of given $d,k$ and mixed graphs for certain values of $r,z,k$. Such searches quickly become computationally infeasible as the order of the graphs grows. In many cases we can obtain a useful upper bound on the order of cages by restricting the search space to Cayley graphs; thus we present tables of the smallest Cayley graphs separately from tables of smallest general graphs.
	
	Recall that a Cayley graph $\Cay(G,S)$ of a group $G$ and subset $S\subseteq G$ has vertex set the elements of the group $G$, and a (directed) arc from $x$ to $xs$ for every $s\in S$. If the  set $S$ contains involutions or inverse pairs, then the resulting directed 2-cycles in the Cayley graph are considered to be undirected edges. Thus a mixed Cayley graph of order $n$, undirected degree $r$ and directed degree $z$ is constructed from a group $G$ of order $n$, together with a set $S$ such that $S$ contains exactly $r$ elements whose inverse is also in $S$, and $z$ elements whose inverse is not. To ensure the resulting Cayley graph is connected, we insist that $\langle S\rangle=G$.
	
	The Cayley graph search was carried out using \texttt{GAP}~\cite{GAP4} and proceeded by examining each possible group in increasing order, starting from the Moore bound for given $r,z,k$. The geodecity of such a graph is then the largest value of $k$ for which all possible words of length $\leq k$ in the generating set $S$ have different values. (We consider only reduced words, i.e. words in which a generator is not immediately followed by its inverse.) It is well known that if $\phi$ is an automorphism of the group $G$, then the Cayley graphs $\Cay(G,S)$ and $\Cay(G,\phi(S))$ are isomorphic. This provides a very useful means to cut down the search space for Cayley graphs, since only orbit representatives of possible generating sets need be considered.
	
	The general graph search was carried out using a bespoke C program, the output of which was tested against the Cayley graph search to ensure correct functioning. The program proceeds by starting with a Moore tree for given values of $r$ and $z$, then adding vertices and arcs and/or edges to obtain a graph of order $n$. As each arc/edge is added, the graph obtained is tested to ensure it still has geodecity at least $k$; if not, the search backtracks and tries another arc/edge.
	
	Selected output graphs from the search are illustrated in Section~\ref{sec:figs}.
	\subsection{Digraphs}
	Table~\ref{tab:caydig} shows the results of the Cayley digraph search. For $d=2$ we were able to complete the search for all groups of order less than 1024, so the results presented are known to be optimal. For higher degrees the search space becomes increasingly large, and so for degrees 5 and 6 we were only able to search far enough to determine the smallest digraphs of geodecity 2.
	\begin{table}[h]\centering
		\begin{tabular}{|lll|lll|}
			\hline
			$d$ & $k$ & $M$ & $n$ & $\epsilon$ & Group \\
			\hline
			2 & 2 & 7 & 12 & 5 & $\mathrm{Dic}_{12}$ \\
			& 3 & 15 & 20 & 5 & $\mathrm{AGL}(1,5)$ \\
			& 4 & 31 & 54 & 23 & $\Z_9\rtimes\Z_6$ \\
			& 5 & 63 & 136 & 73 & $Z_{17}\rtimes\Z_8$ \\
			& 6 & 127 & 330 & 203 & $Z_3\rtimes(\Z_{11}\rtimes\Z_{10})$ \\
			& 7 & 255 & 720 & 465 & $\mathrm{PGL}(2,9)$ \\
			\hline
			3 & 2 & 13 & 20 & 7 & $\mathrm{AGL}(1,5)$ \\
			& 3 & 40 & 72 & 32 & $S_3\wr S_2$ \\
			& 4 & 121 & 320 & 199 & $(((\Z_2\times Q_8) \rtimes \Z_2) \rtimes \Z_5) \rtimes \Z_2$ \\
			\hline 
			4 & 2 & 21 & 27 & 6 & $(\Z_3\times\Z_3)\rtimes\Z_3$ \\
			& 3 & 85 & 136 & 51 & $\Z_{17}\rtimes\Z_8$ \\
			\hline 
			5 & 2 & 31 & 42 & 11 & $\mathrm{AGL}(1,7)$ \\
			\hline 
			6 & 2 & 43 & 56 & 13 & $\mathrm{AGL}(1,8)$ \\
			\hline 
		\end{tabular}
		\caption{Smallest Cayley digraphs of given degree $d$ and geodecity $k$}
		\label{tab:caydig}
	\end{table}
	
	The general graph search results in Table~\ref{tab:gendig} show a similar pattern, although because the search space is very much larger than in the Cayley case the range of values for which we are able to determine the order of cages is quite restricted.
	
	\begin{table}[h]\centering
		\begin{tabular}{|lll|lll|}
			\hline
			$d$ & $k$ & $M$ & $n$ & $\epsilon$ & Comment \\
			\hline
			2 & 2 & 7 & 9 & 2 & Figure~\ref{fig:d2k2n7}\\
			& 3 & 15 & 20 & 5 & Figure~\ref{fig:d2k3n20} \\
			& 4 & 31 & 54* & 23* & No graphs of order less than 34 \\
			\hline
			3 & 2 & 13 & 16 & 3 & Figure~\ref{fig:d3k2n16}\\
			\hline
		\end{tabular}
		\caption{Smallest digraphs of given degree $d$ and geodecity $k$ (* = smallest known)}
		\label{tab:gendig}
	\end{table}
	
	\subsection{Mixed graphs}
	The Cayley graphs in Table~\ref{tab:caymix} were again found by searching groups of increasing order until the first Cayley graph with the required geodecity was found. Thus the entries in this table are all minimal. The search for general graphs is again much more difficult. We have been able to find bounds for the orders of some cages, but the search space is so large that only a few provably minimal entries are known.
	
	\begin{table}[h]\centering
		\begin{tabular}{|lll|ll|lll|}
			\hline
			$d$ & $r$ & $z$ & $k$ & $M$ & $n$ & $\epsilon$ & Group \\
			\hline
			2 & 1 & 1 & 2 & 6 & 6 & 0 & $S_3$ \\
			&  &  & 3 & 11 & 20 & 9 & $\mathrm{AGL}(1,5)$ \\
			&  &  & 4 & 19 & 32 & 13 & $(\Z_8\rtimes\Z_2)\rtimes\Z_2$ \\
			&  &  & 5 & 32 & 54 & 22 & $(\Z_9\rtimes\Z_3)\rtimes\Z_2$ \\
			\hline 
			3 & 2 & 1 & 2 & 11 & 12 & 1 & $D_{12}$ \\
			& & & 3 & 28 & 48 & 20 & $\Z_2\times S_4$ \\
			& 1 & 2 & 2 & 12 & 12 & 0 & $A_4$ \\
			& & & 3 & 34 & 64 & 30 & $((\Z_{8} \rtimes \Z_{2}) \rtimes \Z_{2}) \rtimes \Z_{2}$ \\
			\hline
			4 & 3 & 1 & 2 & 18 & 18 & 0 & $\Z_3\times S_3$ \\
			& 2 & 2 & 2 & 19 & 24 & 5 & $\mathrm{SL}(2,3)$ \\
			& 1 & 3 & 2 & 20 & 20 & 0 & $\mathrm{AGL}(1,5)$ \\
			\hline
			5 & 4 & 1 & 2 & 27 & 30 & 3 & $\Z_5\times S_3$ \\
			& 3 & 2 & 2 & 28 & 42 & 14 & $\Z_7\times S_3$ \\
			& 2 & 3 & 2 & 29 & 39 & 10 & $\Z_{13}\times\Z_3$ \\
			& 1 & 4 & 2 & 30 & 42 & 12 & $\mathrm{AGL}(1,7)$ \\
			\hline
			6 & 5 & 1 & 2 & 38 & 48 & 10 & $D_{48}$ \\
			& 4 & 2 & 2 & 39 & 48 & 9 & $D_8\times S_3$ \\
			& 3 & 3 & 2 & 40 & 52 & 12 & $\Z_{13}\rtimes\Z_4$ \\
			& 2 & 4 & 2 & 41 & 54 & 13 & $(\Z_3\times\Z_3)\rtimes\Z_6$ \\
			& 1 & 5 & 2 & 42 & 42 & 0 & $\mathrm{AGL}(1,7)$ \\
			\hline
		\end{tabular}
		\caption{Smallest Cayley mixed graphs of given total degree $d$ and geodecity $k$}
		\label{tab:caymix}
	\end{table}
	
	\begin{table}[h]\centering
		\begin{tabular}{|lll|ll|lll|}
			\hline
			$d$ & $r$ & $z$ & $k$ & $M$ & $n$ & $\epsilon$ & Comment \\
			\hline
			2 & 1 & 1 & 2 & 6 & 6 & 0 & Kautz graph \\
			&  &  & 3 & 11 & 16 & 5 & Figure~\ref{fig:r1z1k3}\\
			&  &  & 4 & 19 & 30 & 11 & Figure~\ref{fig:r1z1k4}\\
			&  &  & 5 & 32 & 54* & 22* & No graphs of order less than 50 \\
			\hline 
			3 & 2 & 1 & 2 & 11 & 12 & 1 & Cayley graph of $D_{12}$ \\
			& 2 & 1 & 3 & 28 & 48* & 20* & No graphs of order less than 32 \\
			& 1 & 2 & 2 & 12 & 12 & 0 & Kautz graph \\
			\hline
			4 & 3 & 1 & 2 & 18 & 18 & 0 & Bos\'ak graph \\
			& 2 & 2 & 2 & 19 & 21 & 2 & Figure~\ref{fig:r2z2k2} \\
			& 1 & 3 & 2 & & 20 & 0 & Kautz graph \\
			\hline
		\end{tabular}
		\caption{Smallest mixed graphs of given total degree $d$ and geodecity $k$ (* = smallest known)}
		\label{tab:genmix}
	\end{table}
	
	\section{Figures of some known cages}\label{sec:figs}
	
	\begin{figure}[h]\centering
		\begin{tikzpicture}[middlearrow=stealth,x=0.2mm,y=-0.2mm,inner sep=0.2mm,scale=0.6,very thick,vertex/.style={circle,draw,minimum size=10,fill=lightgray}]
			\node at (380,140) [vertex] (v1) {};
			\node at (380,220) [vertex] (v2) {};
			\node at (620,520) [vertex] (v3) {};
			\node at (380,300) [vertex] (v4) {};
			\node at (460,440) [vertex] (v5) {};
			\node at (540,480) [vertex] (v6) {};
			\node at (140,520) [vertex] (v7) {};
			\node at (220,480) [vertex] (v8) {};
			\node at (300,440) [vertex] (v9) {};
			\path
			(v1) edge [middlearrow] (v2)
			(v1) edge [middlearrow] (v3)
			(v2) edge [middlearrow] (v4)
			(v2) edge [middlearrow] (v5)
			(v3) edge [middlearrow] (v6)
			(v3) edge [middlearrow] (v7)
			(v4) edge [middlearrow,bend right] (v1)
			(v4) edge [middlearrow] (v6)
			(v5) edge [middlearrow,bend right] (v3)
			(v5) edge [middlearrow] (v8)
			(v6) edge [middlearrow] (v5)
			(v6) edge [middlearrow] (v9)
			(v7) edge [middlearrow] (v1)
			(v7) edge [middlearrow] (v8)
			(v8) edge [middlearrow] (v4)
			(v8) edge [middlearrow] (v9)
			(v9) edge [middlearrow] (v2)
			(v9) edge [middlearrow,bend right] (v7)
			;
		\end{tikzpicture}
		~~
		\begin{tikzpicture}[middlearrow=stealth,x=0.2mm,y=-0.2mm,inner sep=0.2mm,scale=0.6,very thick,vertex/.style={circle,draw,minimum size=10,fill=lightgray}]
			\node at (380,300) [vertex] (v1) {};
			\node at (380,140) [vertex] (v2) {};
			\node at (220,480) [vertex] (v3) {};
			\node at (380,220) [vertex] (v4) {};
			\node at (620,520) [vertex] (v5) {};
			\node at (300,440) [vertex] (v6) {};
			\node at (460,440) [vertex] (v7) {};
			\node at (540,480) [vertex] (v8) {};
			\node at (140,520) [vertex] (v9) {};
			\path
			(v1) edge [middlearrow,bend right] (v2)
			(v1) edge [middlearrow] (v3)
			(v2) edge [middlearrow] (v4)
			(v2) edge [middlearrow] (v5)
			(v3) edge [middlearrow] (v6)
			(v3) edge [middlearrow] (v7)
			(v4) edge [middlearrow] (v1)
			(v4) edge [middlearrow] (v6)
			(v5) edge [middlearrow] (v8)
			(v5) edge [middlearrow] (v9)
			(v6) edge [middlearrow] (v8)
			(v6) edge [middlearrow,bend right] (v9)
			(v7) edge [middlearrow] (v4)
			(v7) edge [middlearrow,bend right] (v5)
			(v8) edge [middlearrow] (v1)
			(v8) edge [middlearrow] (v7)
			(v9) edge [middlearrow] (v2)
			(v9) edge [middlearrow] (v3)
			;
		\end{tikzpicture}
		\caption{Two isomorphism classes of $(2,2,+2)$ digraphs}
		\label{fig:d2k2n7}
	\end{figure}
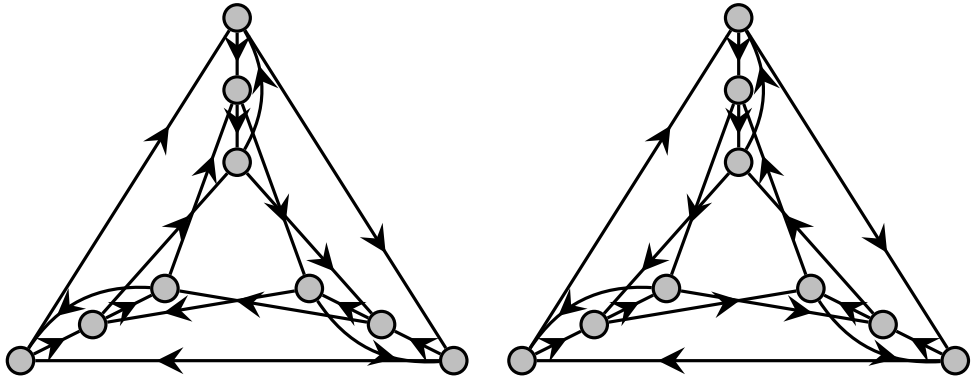
	
	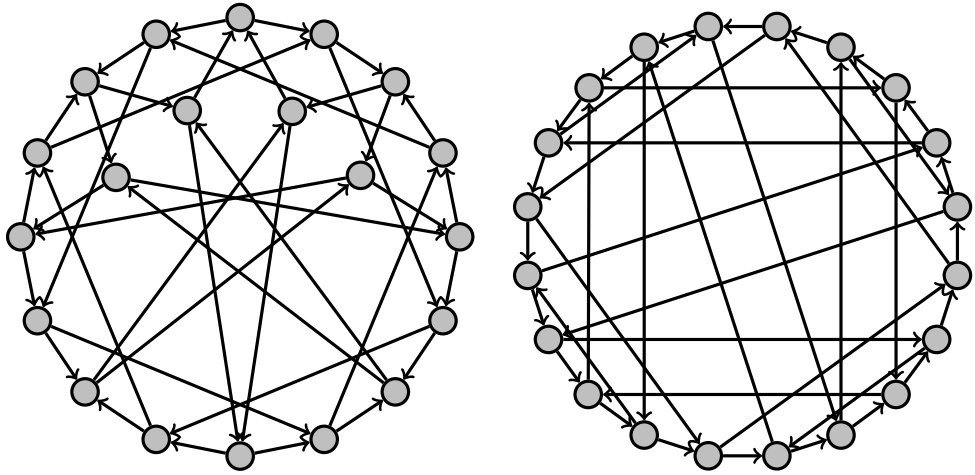
\begin{figure}[h]
		\centering
		\begin{tikzpicture}[x=0.4mm,y=-0.4mm,inner sep=0.2mm,scale=0.25,very thick,vertex/.style={circle,draw,minimum size=10,fill=lightgray}]
			\node at (181.4,173.2) [vertex] (v1) {};
			\node at (222.6,300.4) [vertex] (v2) {};
			\node at (317.2,211.5) [vertex] (v3) {};
			\node at (95.9,379.5) [vertex] (v4) {};
			\node at (679.4,379.5) [vertex] (v5) {};
			\node at (387.6,87.8) [vertex] (v6) {};
			\node at (387.6,671.2) [vertex] (v7) {};
			\node at (118.1,267.9) [vertex] (v8) {};
			\node at (118.1,491.1) [vertex] (v9) {};
			\node at (657.2,267.9) [vertex] (v10) {};
			\node at (657.2,491.1) [vertex] (v11) {};
			\node at (499.3,110) [vertex] (v12) {};
			\node at (276,110) [vertex] (v13) {};
			\node at (499.3,649) [vertex] (v14) {};
			\node at (276,649) [vertex] (v15) {};
			\node at (181.4,585.8) [vertex] (v16) {};
			\node at (593.9,173.2) [vertex] (v17) {};
			\node at (593.9,585.8) [vertex] (v18) {};
			\node at (547.6,297.7) [vertex] (v19) {};
			\node at (456.8,212.6) [vertex] (v20) {};
			\path
			(v1) edge[->] (v2)
			(v1) edge[->] (v3)
			(v8) edge[->] (v1)
			(v13) edge[->] (v1)
			(v2) edge[->] (v4)
			(v2) edge[->] (v5)
			(v18) edge[->] (v2)
			(v3) edge[->] (v6)
			(v3) edge[->] (v7)
			(v18) edge[->] (v3)
			(v4) edge[->] (v8)
			(v4) edge[->] (v9)
			(v19) edge[->] (v4)
			(v5) edge[->] (v10)
			(v5) edge[->] (v11)
			(v19) edge[->] (v5)
			(v6) edge[->] (v12)
			(v6) edge[->] (v13)
			(v20) edge[->] (v6)
			(v7) edge[->] (v14)
			(v7) edge[->] (v15)
			(v20) edge[->] (v7)
			(v8) edge[->] (v12)
			(v15) edge[->] (v8)
			(v13) edge[->] (v9)
			(v9) edge[->] (v14)
			(v9) edge[->] (v16)
			(v10) edge[->] (v13)
			(v14) edge[->] (v10)
			(v10) edge[->] (v17)
			(v12) edge[->] (v11)
			(v11) edge[->] (v15)
			(v11) edge[->] (v18)
			(v12) edge[->] (v17)
			(v14) edge[->] (v18)
			(v15) edge[->] (v16)
			(v16) edge[->] (v19)
			(v16) edge[->] (v20)
			(v17) edge[->] (v19)
			(v17) edge[->] (v20)
			;
		\end{tikzpicture}
		\quad
		\begin{tikzpicture}[x=0.4mm,y=-0.4mm,inner sep=0.2mm,scale=0.3,very thick,vertex/.style={circle,draw,minimum size=10,fill=lightgray}]
			\node at (532,328) [vertex] (v1) {};
			\node at (403,75) [vertex] (v2) {};
			\node at (56,252) [vertex] (v3) {};
			\node at (256,52) [vertex] (v4) {};
			\node at (79,399) [vertex] (v5) {};
			\node at (509,181) [vertex] (v6) {};
			\node at (332,528) [vertex] (v7) {};
			\node at (185,505) [vertex] (v8) {};
			\node at (464,460) [vertex] (v9) {};
			\node at (124,120) [vertex] (v10) {};
			\node at (332,52) [vertex] (v11) {};
			\node at (532,252) [vertex] (v12) {};
			\node at (256,528) [vertex] (v13) {};
			\node at (509,399) [vertex] (v14) {};
			\node at (79,181) [vertex] (v15) {};
			\node at (56,328) [vertex] (v16) {};
			\node at (464,120) [vertex] (v17) {};
			\node at (403,505) [vertex] (v18) {};
			\node at (185,75) [vertex] (v19) {};
			\node at (123,460) [vertex] (v20) {};
			\path
			(v1) edge[->] (v11)
			(v1) edge[->] (v12)
			(v1) edge[<-] (v13)
			(v1) edge[<-] (v14)
			(v2) edge[->] (v11)
			(v2) edge[->] (v12)
			(v2) edge[<-] (v17)
			(v2) edge[<-] (v18)
			(v3) edge[<-] (v11)
			(v3) edge[->] (v13)
			(v3) edge[<-] (v15)
			(v3) edge[->] (v16)
			(v4) edge[<-] (v11)
			(v4) edge[<-] (v15)
			(v4) edge[->] (v18)
			(v4) edge[->] (v19)
			(v5) edge[<-] (v12)
			(v5) edge[->] (v14)
			(v5) edge[<-] (v16)
			(v5) edge[->] (v20)
			(v6) edge[<-] (v12)
			(v6) edge[->] (v15)
			(v6) edge[<-] (v16)
			(v6) edge[->] (v17)
			(v7) edge[<-] (v13)
			(v7) edge[<-] (v14)
			(v7) edge[->] (v18)
			(v7) edge[->] (v19)
			(v8) edge[->] (v13)
			(v8) edge[->] (v16)
			(v8) edge[<-] (v19)
			(v8) edge[<-] (v20)
			(v9) edge[->] (v14)
			(v9) edge[<-] (v17)
			(v9) edge[<-] (v18)
			(v9) edge[->] (v20)
			(v10) edge[->] (v15)
			(v10) edge[->] (v17)
			(v10) edge[<-] (v19)
			(v10) edge[<-] (v20)
			;
		\end{tikzpicture}
		\caption{Two digraphs with $d=2,k=3,\epsilon=5$}
		\label{fig:d2k3n20}
	\end{figure}
	
	\begin{figure}[h]\centering
		\begin{tikzpicture}[middlearrow=stealth,x=0.2mm,y=-0.2mm,inner sep=0.2mm,scale=1,thick,vertex/.style={circle,draw,minimum size=10,fill=lightgray}]
			\node at (228,172) [vertex] (v1) {};
			\node at (183,90) [vertex] (v2) {};
			\node at (421,502) [vertex] (v3) {};
			\node at (372,172) [vertex] (v4) {};
			\node at (228,420) [vertex] (v5) {};
			\node at (157,296) [vertex] (v6) {};
			\node at (64,296) [vertex] (v7) {};
			\node at (540,296) [vertex] (v8) {};
			\node at (299,260) [vertex] (v9) {};
			\node at (299,339) [vertex] (v10) {};
			\node at (443,296) [vertex] (v11) {};
			\node at (421,90) [vertex] (v12) {};
			\node at (183,502) [vertex] (v13) {};
			\node at (259,300) [vertex] (v14) {};
			\node at (339,300) [vertex] (v15) {};
			\node at (372,420) [vertex] (v16) {};
			\path
			(v1) edge[middlearrow] (v2)
			(v1) edge[middlearrow,bend left] (v3)
			(v1) edge[middlearrow] (v4)
			(v6) edge[middlearrow] (v1)
			(v9) edge[middlearrow,bend left] (v1)
			(v10) edge[middlearrow,bend left] (v1)
			(v2) edge[middlearrow] (v7)
			(v2) edge[middlearrow,bend left] (v9)
			(v12) edge[middlearrow] (v2)
			(v2) edge[middlearrow,bend left] (v14)
			(v16) edge[middlearrow,bend left] (v2)
			(v3) edge[middlearrow] (v8)
			(v3) edge[middlearrow,bend left] (v10)
			(v13) edge[middlearrow] (v3)
			(v3) edge[middlearrow,bend left] (v15)
			(v16) edge[middlearrow] (v3)
			(v4) edge[middlearrow] (v11)
			(v4) edge[middlearrow] (v12)
			(v4) edge[middlearrow,bend left] (v13)
			(v14) edge[middlearrow,bend left] (v4)
			(v15) edge[middlearrow,bend left] (v4)
			(v5) edge[middlearrow] (v6)
			(v9) edge[middlearrow,bend left] (v5)
			(v10) edge[middlearrow,bend left] (v5)
			(v5) edge[middlearrow,bend left] (v12)
			(v5) edge[middlearrow] (v13)
			(v16) edge[middlearrow] (v5)
			(v6) edge[middlearrow] (v7)
			(v6) edge[middlearrow,bend left] (v8)
			(v14) edge[middlearrow,bend left] (v6)
			(v15) edge[middlearrow,bend left] (v6)
			(v7) edge[middlearrow,bend left] (v10)
			(v11) edge[middlearrow,bend left] (v7)
			(v7) edge[middlearrow] (v13)
			(v7) edge[middlearrow,bend left] (v15)
			(v8) edge[middlearrow,bend left] (v9)
			(v11) edge[middlearrow] (v8)
			(v8) edge[middlearrow] (v12)
			(v8) edge[middlearrow,bend left] (v14)
			(v9) edge[middlearrow,bend left] (v11)
			(v13) edge[middlearrow,bend left] (v9)
			(v10) edge[middlearrow,bend left] (v11)
			(v12) edge[middlearrow,bend left] (v10)
			(v11) edge[middlearrow] (v16)
			(v12) edge[middlearrow,bend left] (v15)
			(v13) edge[middlearrow,bend left] (v14)
			(v14) edge[middlearrow,bend left] (v16)
			(v15) edge[middlearrow,bend left] (v16)
			;
		\end{tikzpicture}
		\caption{The unique extremal digraph $d=3,k=2,n=16$}
		\label{fig:d3k2n16}
	\end{figure}
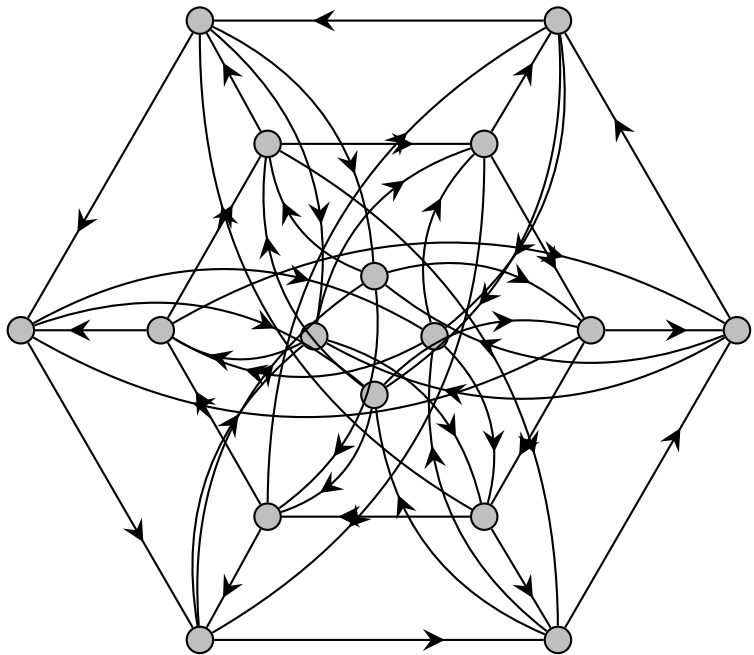

	\begin{figure}[h]
		\centering
		\begin{tikzpicture}[middlearrow=stealth,x=0.2mm,y=-0.2mm,inner sep=0.2mm,scale=0.7,thick,vertex/.style={circle,draw,minimum size=10,fill=lightgray}]
			\node at (299,65) [vertex] (v1) {};
			\node at (465,133) [vertex] (v2) {};
			\node at (64,300) [vertex] (v3) {};
			\node at (133,133) [vertex] (v4) {};
			\node at (534,300) [vertex] (v5) {};
			\node at (299,472) [vertex] (v6) {};
			\node at (177,422) [vertex] (v7) {};
			\node at (465,466) [vertex] (v8) {};
			\node at (299,127) [vertex] (v9) {};
			\node at (421,178) [vertex] (v10) {};
			\node at (127,300) [vertex] (v11) {};
			\node at (177,178) [vertex] (v12) {};
			\node at (472,300) [vertex] (v13) {};
			\node at (299,535) [vertex] (v14) {};
			\node at (133,466) [vertex] (v15) {};
			\node at (421,422) [vertex] (v16) {};
			\path
			(v1) edge (v9)
			(v2) edge (v10)
			(v3) edge (v11)
			(v4) edge (v12)
			(v5) edge (v13)
			(v6) edge (v14)
			(v7) edge (v15)
			(v8) edge (v16)
			(v1) edge[middlearrow] (v2)
			(v9) edge[middlearrow] (v11)
			(v2) edge[middlearrow] (v5)
			(v10) edge[middlearrow] (v12)
			(v3) edge[middlearrow] (v4)
			(v11) edge[middlearrow] (v6)
			(v4) edge[middlearrow] (v1)
			(v12) edge[middlearrow] (v7)
			(v5) edge[middlearrow] (v8)
			(v13) edge[middlearrow] (v9)
			(v6) edge[middlearrow] (v13)
			(v14) edge[middlearrow] (v15)
			(v7) edge[middlearrow] (v16)
			(v15) edge[middlearrow] (v3)
			(v8) edge[middlearrow] (v14)
			(v16) edge[middlearrow] (v10)
			;
		\end{tikzpicture}
		\quad
		\begin{tikzpicture}[middlearrow=stealth,x=0.2mm,y=-0.2mm,inner sep=0.2mm,scale=0.7,thick,vertex/.style={circle,draw,minimum size=10,fill=lightgray}]
			\node at (300,66) [vertex] (v1) {};
			\node at (417,98) [vertex] (v2) {};
			\node at (67,300) [vertex] (v3) {};
			\node at (300,385) [vertex] (v4) {};
			\node at (502,183) [vertex] (v5) {};
			\node at (183,502) [vertex] (v6) {};
			\node at (98,417) [vertex] (v7) {};
			\node at (534,300) [vertex] (v8) {};
			\node at (300,215) [vertex] (v9) {};
			\node at (417,502) [vertex] (v10) {};
			\node at (215,300) [vertex] (v11) {};
			\node at (300,533) [vertex] (v12) {};
			\node at (98,183) [vertex] (v13) {};
			\node at (183,98) [vertex] (v14) {};
			\node at (502,417) [vertex] (v15) {};
			\node at (385,300) [vertex] (v16) {};
			\path
			(v1) edge (v9)
			(v2) edge (v10)
			(v3) edge (v11)
			(v4) edge (v12)
			(v5) edge (v13)
			(v6) edge (v14)
			(v7) edge (v15)
			(v8) edge (v16)
			(v1) edge[middlearrow] (v2)
			(v9) edge[middlearrow] (v11)
			(v2) edge[middlearrow] (v5)
			(v10) edge[middlearrow] (v12)
			(v3) edge[middlearrow] (v13)
			(v11) edge[middlearrow] (v4)
			(v4) edge[middlearrow] (v16)
			(v12) edge[middlearrow] (v6)
			(v5) edge[middlearrow] (v8)
			(v13) edge[middlearrow] (v14)
			(v6) edge[middlearrow] (v7)
			(v14) edge[middlearrow] (v1)
			(v7) edge[middlearrow] (v3)
			(v15) edge[middlearrow] (v10)
			(v8) edge[middlearrow] (v15)
			(v16) edge[middlearrow] (v9)
			;
		\end{tikzpicture}
		\caption{Two mixed graphs with $r=1,z=1,k=3,\epsilon=5$}
		\label{fig:r1z1k3}
	\end{figure}
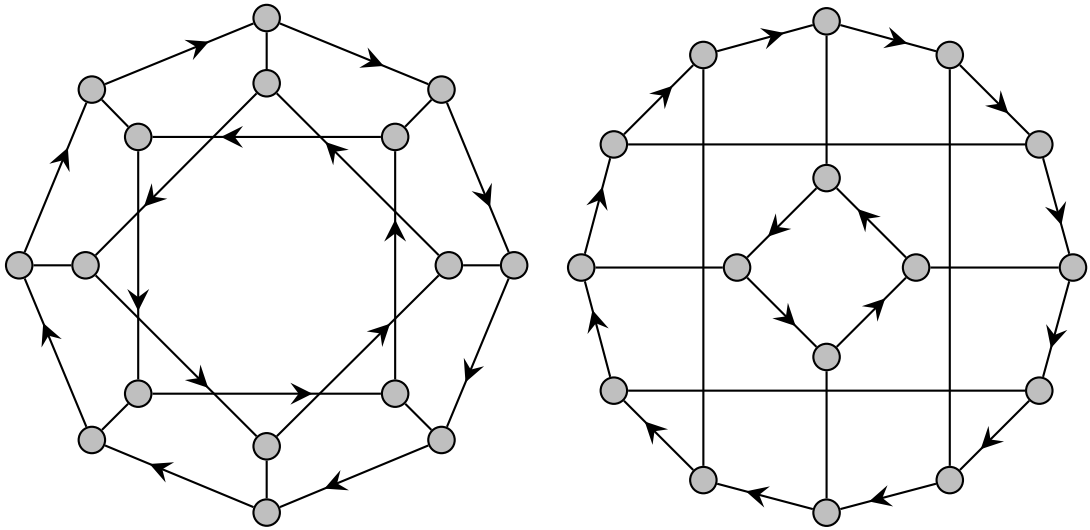
	
	\begin{figure}[h]
		\centering
		\begin{tikzpicture}[middlearrow=stealth,x=0.2mm,y=-0.2mm,inner sep=0.2mm,scale=0.7,thick,vertex/.style={circle,draw,minimum size=10,fill=lightgray}]
			\node at (276,338) [vertex] (v1) {};
			\node at (331,338) [vertex] (v2) {};
			\node at (347,286) [vertex] (v3) {};
			\node at (303,254) [vertex] (v4) {};
			\node at (260,286) [vertex] (v5) {};
			\node at (250,373) [vertex] (v6) {};
			\node at (358,373) [vertex] (v7) {};
			\node at (392,270) [vertex] (v8) {};
			\node at (304,206) [vertex] (v9) {};
			\node at (216,270) [vertex] (v10) {};
			\node at (432,260) [vertex] (v11) {};
			\node at (300,164) [vertex] (v12) {};
			\node at (169,260) [vertex] (v13) {};
			\node at (219,415) [vertex] (v14) {};
			\node at (382,414) [vertex] (v15) {};
			\node at (474,244) [vertex] (v16) {};
			\node at (298,118) [vertex] (v17) {};
			\node at (123,245) [vertex] (v18) {};
			\node at (191,451) [vertex] (v19) {};
			\node at (407,451) [vertex] (v20) {};
			\node at (298,69) [vertex] (v21) {};
			\node at (79,229) [vertex] (v22) {};
			\node at (163,487) [vertex] (v23) {};
			\node at (434,486) [vertex] (v24) {};
			\node at (517,228) [vertex] (v25) {};
			\node at (33,212) [vertex] (v26) {};
			\node at (135,521) [vertex] (v27) {};
			\node at (459,521) [vertex] (v28) {};
			\node at (559,211) [vertex] (v29) {};
			\node at (295,20) [vertex] (v30) {};
			\path
			(v1) edge[middlearrow] (v2)
			(v5) edge[middlearrow] (v1)
			(v1) edge (v6)
			(v2) edge[middlearrow] (v3)
			(v2) edge (v11)
			(v3) edge[middlearrow] (v4)
			(v3) edge (v16)
			(v4) edge[middlearrow] (v5)
			(v4) edge (v21)
			(v5) edge (v26)
			(v6) edge[middlearrow] (v7)
			(v10) edge[middlearrow] (v6)
			(v7) edge[middlearrow] (v8)
			(v7) edge (v18)
			(v8) edge[middlearrow] (v9)
			(v8) edge (v14)
			(v9) edge[middlearrow] (v10)
			(v9) edge (v28)
			(v10) edge (v24)
			(v11) edge[middlearrow] (v12)
			(v15) edge[middlearrow] (v11)
			(v12) edge[middlearrow] (v13)
			(v12) edge (v23)
			(v13) edge[middlearrow] (v14)
			(v13) edge (v29)
			(v14) edge[middlearrow] (v15)
			(v15) edge (v17)
			(v16) edge[middlearrow] (v17)
			(v20) edge[middlearrow] (v16)
			(v17) edge[middlearrow] (v18)
			(v18) edge[middlearrow] (v19)
			(v19) edge[middlearrow] (v20)
			(v19) edge (v30)
			(v20) edge (v22)
			(v21) edge[middlearrow] (v22)
			(v25) edge[middlearrow] (v21)
			(v22) edge[middlearrow] (v23)
			(v23) edge[middlearrow] (v24)
			(v24) edge[middlearrow] (v25)
			(v25) edge (v27)
			(v26) edge[middlearrow] (v27)
			(v30) edge[middlearrow] (v26)
			(v27) edge[middlearrow] (v28)
			(v28) edge[middlearrow] (v29)
			(v29) edge[middlearrow] (v30)
			;
		\end{tikzpicture}
		\caption{A mixed graph with $r=1,z=1,k=4,\epsilon=11$}
		\label{fig:r1z1k4}
	\end{figure}
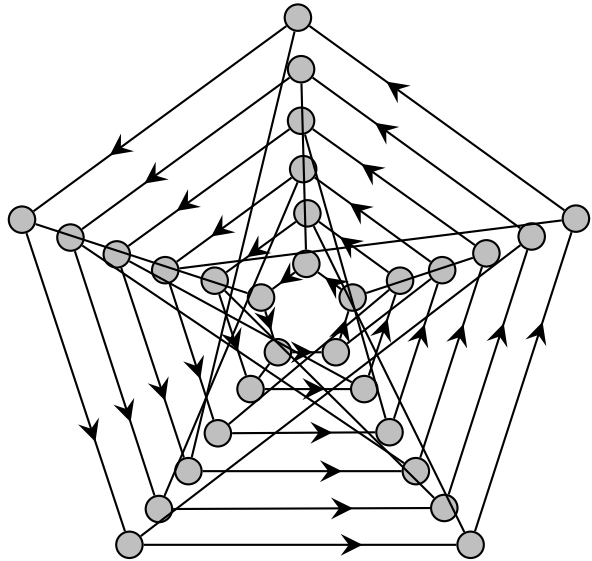
	
	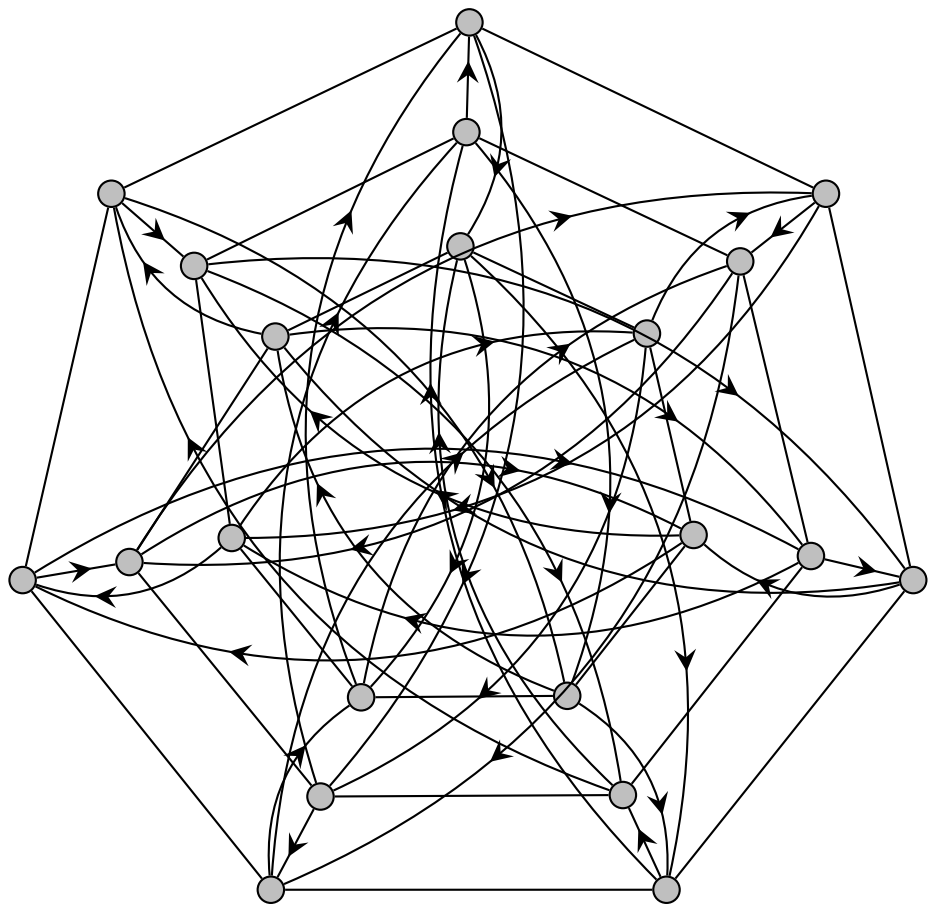
\begin{figure}\centering
		\begin{tikzpicture}[middlearrow=stealth,x=0.2mm,y=-0.2mm,inner sep=0.2mm,scale=1,thick,vertex/.style={circle,draw,minimum size=10,fill=lightgray}]
			\node at (588,446) [vertex] (v1) {};
			\node at (541,250) [vertex] (v2) {};
			\node at (123,205) [vertex] (v3) {};
			\node at (463,605) [vertex] (v4) {};
			\node at (359,164) [vertex] (v5) {};
			\node at (361,91) [vertex] (v6) {};
			\node at (135,450) [vertex] (v7) {};
			\node at (229,668) [vertex] (v8) {};
			\node at (656,462) [vertex] (v9) {};
			\node at (203,434) [vertex] (v10) {};
			\node at (492,668) [vertex] (v11) {};
			\node at (289,540) [vertex] (v12) {};
			\node at (232,300) [vertex] (v13) {};
			\node at (355,240) [vertex] (v14) {};
			\node at (598,205) [vertex] (v15) {};
			\node at (262,606) [vertex] (v16) {};
			\node at (426,539) [vertex] (v17) {};
			\node at (510,432) [vertex] (v18) {};
			\node at (479,298) [vertex] (v19) {};
			\node at (178,253) [vertex] (v20) {};
			\node at (64,462) [vertex] (v21) {};
			\path
			(v1) edge (v2)
			(v1) edge (v4)
			(v1) edge[middlearrow] (v9)
			(v1) edge[middlearrow,bend left] (v10)
			(v13) edge[middlearrow,bend left] (v1)
			(v21) edge[middlearrow,bend left] (v1)
			(v2) edge (v5)
			(v2) edge[middlearrow,bend left] (v7)
			(v2) edge[middlearrow,bend left] (v8)
			(v12) edge[middlearrow,bend left] (v2)
			(v15) edge[middlearrow] (v2)
			(v4) edge[middlearrow,bend left] (v3)
			(v3) edge (v6)
			(v13) edge[middlearrow,bend left] (v3)
			(v3) edge[middlearrow,bend left] (v17)
			(v3) edge[middlearrow] (v20)
			(v3) edge (v21)
			(v11) edge[middlearrow] (v4)
			(v4) edge[middlearrow,bend left] (v14)
			(v4) edge (v16)
			(v20) edge[middlearrow,bend left] (v4)
			(v5) edge[middlearrow] (v6)
			(v11) edge[middlearrow,bend left] (v5)
			(v16) edge[middlearrow,bend left] (v5)
			(v5) edge[middlearrow,bend left] (v17)
			(v5) edge (v20)
			(v12) edge[middlearrow,bend left] (v6)
			(v6) edge[middlearrow,bend left] (v14)
			(v6) edge (v15)
			(v6) edge[middlearrow,bend left] (v16)
			(v7) edge (v13)
			(v7) edge[middlearrow,bend left] (v15)
			(v7) edge (v16)
			(v7) edge[middlearrow,bend left] (v18)
			(v21) edge[middlearrow] (v7)
			(v8) edge (v11)
			(v8) edge[middlearrow,bend left] (v12)
			(v16) edge[middlearrow] (v8)
			(v8) edge[middlearrow,bend left] (v19)
			(v8) edge (v21)
			(v9) edge (v11)
			(v9) edge[middlearrow,bend left] (v13)
			(v9) edge (v15)
			(v9) edge[middlearrow,bend left] (v18)
			(v20) edge[middlearrow,bend left] (v9)
			(v10) edge (v12)
			(v15) edge[middlearrow,bend left] (v10)
			(v10) edge[middlearrow,bend left] (v19)
			(v10) edge (v20)
			(v10) edge[middlearrow,bend left] (v21)
			(v14) edge[middlearrow,bend left] (v11)
			(v17) edge[middlearrow,bend left] (v11)
			(v14) edge[middlearrow,bend left] (v12)
			(v12) edge (v17)
			(v13) edge (v14)
			(v17) edge[middlearrow,bend left] (v13)
			(v14) edge (v19)
			(v19) edge[middlearrow,bend left] (v15)
			(v19) edge[middlearrow,bend left] (v16)
			(v17) edge (v18)
			(v18) edge (v19)
			(v18) edge[middlearrow,bend left] (v20)
			(v18) edge[middlearrow,bend left] (v21)
			;
		\end{tikzpicture}
		\caption{The extremal mixed graph $r=2,z=2,k=2,n=21$}
		\label{fig:r2z2k2}
	\end{figure}

	\section*{Acknowledgements}
	The first author acknowledges funding from an LMS Early Career Fellowship and thanks the Open University for an extension of funding in 2020.

	%----------------------------------------------
	
\end{document}